\newcommand{\R}{\mathbb{R}}
\newcommand{\T}{\tau}
\newtheorem{theorem}{Theorem}[section]
\newtheorem{proposition}[theorem]{Proposition}
\newtheorem{defn}{Definition}[section]
\newtheorem{rmk}{Remark}[section]
\definecolor{darkgreen}{rgb}{0, 0.5, 0}
\newcommand{\sw}{\color{blue}}
\begin{document}

\pagestyle{myheadings}

\title{
\vspace*{-20mm}
Rate-Induced Tipping: Thresholds, Edge States and Connecting Orbits
}

\author{
Sebastian Wieczorek\thanks{School of Mathematical Sciences, University College Cork, Cork, T12 XF62, Ireland}~\thanks{Joint lead authors},
Chun Xie${}^*$
and
Peter Ashwin\thanks{Centre for Systems, Dynamics and Control, Department of Mathematics and Statistics, University of Exeter, Exeter EX4 4QF, UK}~${}^{\dag}$
}

\date{\today}

\maketitle

\vspace*{0mm}

\section*{Abstract}

Rate-induced tipping (R-tipping) occurs when time-variation of input parameters of a dynamical system interacts with system timescales to give genuine nonautonomous instabilities. Such instabilities appear as the input varies at some {\em critical rates} and cannot, in general, be understood in terms of autonomous bifurcations in the frozen system with a fixed-in-time input. 

This paper develops an accessible mathematical framework for R-tipping in multidimensional nonautonomous dynamical systems with an autonomous future limit. We focus on R-tipping via loss of tracking of base attractors that are equilibria in the frozen system, due to crossing what we call  {\em regular R-tipping thresholds}. These thresholds are anchored at infinity by {\em regular R-tipping edge states}:  compact normally hyperbolic invariant sets of the autonomous future limit system that have one unstable direction, orientable stable manifold, and lie on a basin boundary. We define R-tipping and critical rates for the nonautonomous system in terms of special solutions that limit to a compact invariant set of the autonomous future limit system that is not an attractor. We focus on the case when the limit set is a regular edge state, introduce the concept of {\em edge tails}, and rigorously classify R-tipping into reversible, irreversible, and degenerate cases.
The central idea is to use the autonomous dynamics of the future limit system to analyse R-tipping in the nonautonomous system. We compactify the original nonautonomous system to include the limiting autonomous dynamics. Considering regular R-tipping edge states that are equilibria allows us to prove two results. First, we give sufficient conditions for the occurrence of R-tipping in terms of easily testable properties of the frozen system and input variation. Second, we give  necessary and sufficient conditions for the occurrence of reversible and irreversible R-tipping in terms of computationally verifiable (heteroclinic) connections to regular R-tipping edge states in the autonomous compactified system. 


\vspace{10mm}

\noindent
Mathematics Subject Classification Numbers:\\ 
34C37, 34C45, 34E15, 37B55, 37C29, 37C60, 37M20.

~

\noindent
Email: sebastian.wieczorek@ucc.ie

\tableofcontents

\newpage

\section{Introduction}

Instability in the evolution of an open system subject to time-varying external conditions is a vitally important problem in many areas of applied science, including climate, ecology and biology. In particular, ``tipping points'' or ``critical transitions'' are {\em large, sudden} and often {\em irreversible} changes in the state of the system in response to {\em  small and slow} changes in the external conditions. 
Consider an open system near a stable state
(the base attractor). We might expect 
that, as external conditions change with time, the stable state will change too. We describe this phenomenon as a {\em moving stable state}.
Furthermore, we expect that the boundary of the basin of attraction of this stable state will change too. In many cases the system may adapt to changing external conditions and {\em track} the moving stable state.  However, tracking may not
always be possible. Nonlinearities, competing timescales and feedbacks in the system mean that the stable state may turn unstable or disappear.  Alternatively, the system may 
cross the moving basin boundary and evolve away from the moving stable state.
When this happens, the system {\em tips} to a different state. The different state may be long-lived (another attractor in a multi-stable system) or short-lived (a transient response in an excitable system).

Our focus is on an interesting and relatively new tipping phenomenon,
in which the open system fails to track a moving stable state as external conditions 
vary at some critical rate(s).
Finding these critical rates and characterising what happens when they are exceeded is of great interest in the natural sciences.
From a mathematical viewpoint, 
such tipping corresponds to a {\em genuine nonautonomous instability} in the corresponding 
nonautonomous  dynamical system
with time-varying {input parameters, also referred to as} external inputs.
The two main obstacles to mathematical analysis of such tipping are: (a) inability to explain it in terms of a classical autonomous bifurcation of the  stable state in the {\em frozen system} with fixed-in-time external inputs, and (b) the absence of compact  stable states such as equilibria, limit cycles or tori in the nonautonomous system.
Thus,  it 
requires development of mathematical techniques beyond classical autonomous bifurcation theory~\cite{Kuznetsov2004}. Existing approaches include, for example, identifying  a ``safe region'' about the moving stable state~\cite{Ashwin2012,Bishnani2003,Osinga2014}, using geometric singular perturbations~\cite{Mitry2013,OSullivan2021,Perryman2014,Vanselow2019,Wieczorek2011},
finite-time Lyapunov exponents~\cite{Duc2016,HoyerLeitzel2018,KloedenRasmussen2011,Meyeretal2018},
local pullback attractors~\cite{Alkhayuon2018,Arnold1998,Ashwin2016,KloedenRasmussen2011,Kuehn2021,Longo2021,Potzsche2010,Rasmussen2007} or snapshot attractors~\cite{Drotos2015,Kaszas2019}, Melnikov-like methods~\cite{Kuehn2021},
as well as most likely tipping paths~\cite{Chen2019,Ritchie2016} and tipping probabilities~\cite{Hartl2019} in the presence of noise.

This work overcomes obstacles  (a) and (b) as follows. We relate the actual state of the nonautonomous system to the moving stable state
to develop an accessible mathematical framework for such tipping phenomena. 
Within this framework  we use 
the compactification technique developed in~\cite{Wieczorek2019compact} in combination with geometric singular perturbation theory~\cite{Fenichel1979,Jones1995,Szmolyan2004,Wechselberger2011} to give rigorous results that are both easily verifiable and  relevant for a wide range of applications.
Most importantly,  
we extend a number of key rigorous results from~\cite{Ashwin2016} for  irreversible R-tipping in one-dimensional (scalar) systems to arbitrary dimension and to different cases of R-tipping, including reversible R-tipping that can occur only in higher dimensions. 
Our approach is guided by illustrative numerical examples of different cases of R-tipping in higher dimension, 
which are given in~\cite{Xie2019}.

\subsection{Motivation: Critical Factors and R-tipping}

In applications, it is important to determine {\it critical factors} for tipping~\cite{Ashwin2012}.  The most commonly studied critical factor is a {\it critical level} of the external input at
which the moving stable state of a complex system disappears or
destabilises in a classical dangerous\,\footnote{Dangerous bifurcations have a discontinuity in the parametrised family (or branch) of  attractors at the bifurcation point and include, for example, saddle-node and subcritical Hopf bifurcations~\cite{Thompson1994}.}
bifurcation, causing the system to
suddenly move to a different
state~\cite{Ashwin2016,Kuehn2011,Ott,Ritchie2021,Thompson_Sieber_2010b}.  
Critical levels
have been identified in many different contexts: the collapse of
thermohaline circulation past the critical level of fresh-water influx
into the North Atlantic~\cite{Alkhayuon2019,Dijkstra2008,Lohmann2021}
loss of submerged
vegetation in shallow turbid lakes past the critical level of nutrient
concentration~\cite[Ch.7]{Scheffer2009book}, forest-to-desert
transitions below the critical level of
precipitation~\cite[Ch.11]{Scheffer2009book}, power outage blackouts
past the critical level of power consumption~\cite{Budd2002,Dobson1989},
and in the reports of the {\em Intergovernmental Panel for Climate Change}~\cite{IPCC}
which specify critical levels of atmospheric temperature and CO$_2$
concentration.  The underlying dynamical mechanism is
illustrated in a simple example in Figure~\ref{fig:BR}(a). As the
external input changes in time, the position of the stable state
changes too. The nonautonomous system can track the moving
stable state as long as it persists, provided that the 
external input  varies slowly enough. However, there may be a critical level of the external input at which the
moving stable state disappears or destabilises in a classical
bifurcation~\cite[Lemma 2.3]{Ashwin2016}.  
If the bifurcation is dangerous, there is no nearby stable state to track
beyond the critical level, and the system suddenly moves to
a different state. Note that the critical transition in Figure~\ref{fig:BR}(a):
\begin{itemize}
\item
Requires a critical level of the external input -- a classical dangerous bifurcation of the stable state in the 
{\em frozen system} with fixed-in-time external inputs~\cite{Kuehn2011,OKeeffe2019,Thompson_Sieber_2010b}.
\item Occurs no matter how slowly the external input
passes through the critical level.
\end{itemize}
This nonautonomous instability has been described as a {\em dynamic bifurcation}~\cite{Benoit1990}, {\em adiabatic bifurcation}~\cite{KloedenRasmussen2011} or {\em bifurcation-induced tipping (B-tipping)}~\cite{Ashwin2012}. The key point is that it can be understood in terms of a classical autonomous bifurcation of the moving stable state. In the presence of noise, there may be early warning signals of the impending bifurcation, and there has been much progress in understanding when such signals may be present \cite{boers2021observation,bury2020detecting,Dakos_etal_2015,Dakos_etal_2008,Ditlevsen_etal_2010,Schefferetal2009}.
%
\begin{figure}[t!]
 \includegraphics[width=15.5cm]{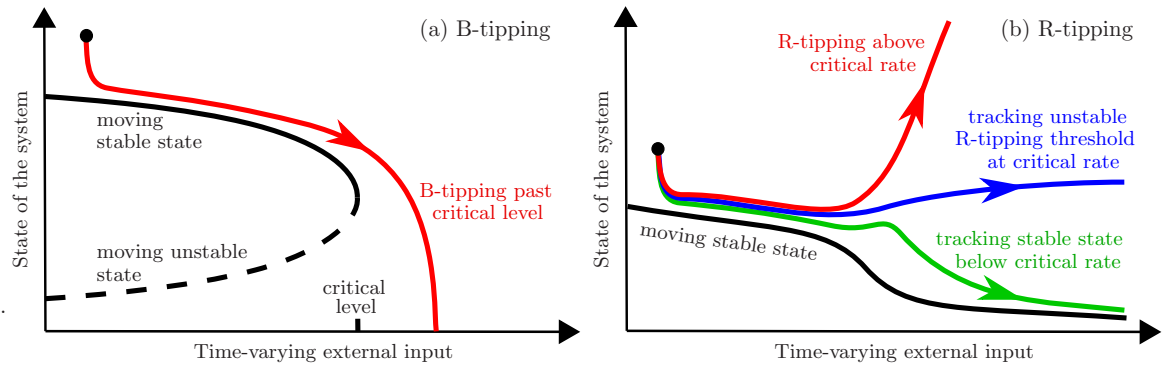}
  \caption{ The conceptual difference between (a) B-tipping and (b) R-tipping for monotonically changing external inputs. The (solid black) moving stable state is a stable state of the frozen system for different values of a fixed-in-time external input. The (colour) trajectories show the system behaviour for a time-varying external input. 
  (a) In B-tipping, there is a {\it critical level} of the external input, and tipping occurs for any rate of passage through the critical level. 
  (b) In R-tipping, there is no critical level, but there is a {\it critical rate} of change of the external input above which the system fails to track the moving stable state and tips. The (blue) special critical-rate trajectory tracks what we define as a repelling R-tipping threshold.
    }
\label{fig:BR}
\end{figure}
%

However, critical levels  of the external input are not the only critical factor for sudden transitions. 
Other factors may arise in a system that is given insufficient time to adapt~\cite{Scheffer2008,stocker1997,Wieczorek2011}, that is subjected to fast fluctuations (noise)~\cite{Ashwin2012,Ditlevsen_etal_2010}, that is close to basin boundary 
and may spend long period of time near (unstable) states of saddle type~\cite{Bezekci2015,Hastings2018,Kuehn2011}, or is sensitive to the  spatial extent, spatial location or spatial change of the external input~\cite{Berestycki2009, hasan2022,Idris2008,Rushton1937,Starmer2007}. 
What is more, real-world tipping phenomena may involve an interplay between 
different critical factors~\cite{OKeeffe2019,Ritchie2021,slyman2022}.

The focus of this work is on systems that are particularly sensitive
to how fast the external input changes~\cite{Jones2021,ritchie2022}. Such systems may not even have
any critical levels of the external input, but they may have {\it critical rates} of change of the external input:
they suddenly and unexpectedly move to a different state if the
external input changes  faster than some critical rate. Although critical rates are less understood than critical levels, they are equally relevant and ubiquitous~\cite{
Alkhayuon2018,
arnscheidt2022,
Arumugam2021,
Ashwin2012,
Berestycki2009,
Clarke2021,
Kaur2022,
Kiers2018,
Lohmann2021,
Merker2021,
Mitry2013,
Morris2002,
OKeeffe2019,
OSullivan2021,
Pierini2021,
Scheffer2008,
Siteur2016,
Suchithra2020,
van2021,
Vanselow2022evo,
vanselow2022,
Vanselow2019,
Wieczorek2011}.
In particular, critical rates
are of special interest in climate science and ecology in the contexts of {\em global warming}, increasing {\em climate variability}, and ensuing
 {\em failure to adapt to changing external conditions}: the moving stable state is continuously available, but the system is unable to adjust to its changing position when the change happens slowly but too fast. This is evidenced by
reports of contemporary and projected climate variability being too
fast for animals and plants to migrate or
adapt~\cite{Berestycki2009,Jezkova2016,Leemans2004,van2021}, critical dependence of
thermohaline circulation on the rate of North-Atlantic fresh-water
influx~\cite{Alkhayuon2018,Lohmann2021,Lucarini2005} and the rate of CO$_2$ emissions~\cite{stocker1997}, sudden release of soil carbon from
peatlands into the atmosphere~\cite{Clarke2021,Luke2011,Wieczorek2011} that can be accompanied by ``zombie wildfires"~\cite{OSullivan2021,Scholten2021} above some critical rate of atmospheric
warming, climate-related ``critical-rate
hypothesis'' in the context of coastal wetlands responding to rising
sea level~\cite{Morris2002} and more generally ecosystems subject to
rapid changes in external conditions such as wet El Ni\~no Southern
Oscillation years, droughts, or disease outbreaks~\cite{OKeeffe2019,Scheffer2008}.

There are many other areas of science where critical
rates are important.
In neuroscience, in addition to type-I or II nerves which ``fire''
above some level of externally applied voltage, there are type-III
excitable nerves that are able to accommodate slow changes in an
externally applied voltage up to very high voltage levels. What is
necessary for
type-III nerves to ``fire'' is a fast enough increase in an externally
applied voltage, rather than a high enough voltage level, and this
rate-dependence allows the brain for accurate coincidence
detection~\cite{Hill1935,Hodgkin1948,Idris2007,Mitry2013,Rubin2021}.
In competitive economy, there is a related ``chasing problem'' in the
context of supply, demand and prices trying to adapt to a changing
equilibrium~\cite{Sheng-YiHsu2014}.

The general concept of rate-induced tipping is illustrated in
Figure~\ref{fig:BR}(b).  When the external input changes in time, the
nonautonomous system tries to track the moving stable state. Tracking
is guaranteed if the external input changes slowly enough~\cite[Lemma2.3]{Ashwin2016}.  However, above some critical rate of the external input change,
the system can no longer track the moving stable state and may
suddenly move to a different state. Note that the critical transition in
Figure~\ref{fig:BR}(b):
\begin{itemize}
\item Does not require any critical level of the external input -- there need not be any classical bifurcation of the stable state in the frozen system with fixed-in-time external inputs.
\item  Occurs only if the external input varies  faster than some critical rate.
\item Can be {\em irreversible}: the system fails to track the
moving stable state, suddenly moves to a different stable state\footnote{Often referred to as an ``alternative stable state".},
and never returns to the original stable state; see for
example~\cite{Kiers2018,OKeeffe2019,Scheffer2008}.
\item Can be {\em reversible}: the system fails to track the moving 
stable state, makes a large excursion away from it, then returns to the original stable state, and this process may happen repeatedly;
see for example~\cite{Mitry2013,OSullivan2021,Perryman2014,Vanselow2019,Wieczorek2011}.
\end{itemize}
We describe such a genuine nonautonomous instability as a {\em  rate-induced tipping} or simply {\em R-tipping}~\cite{Ashwin2012}.
By ``genuine nonautonomous" we mean that, unlike B-tipping, R-tipping cannot, in general, be understood in terms of a classical autonomous bifurcation of a moving stable state. Nonetheless, in the presence of noise, some of the early-warning signals identified for B-tipping may also occur for R-tipping~\cite{Ritchie2016}.

We highlight that R-tipping is somewhat counter-intuitive and difficult to analyse for a number of reasons.
In addition to the fact that R-tipping cannot be simply explained in terms of a classical bifurcation of the stable state in a 
frozen system~\cite{OKeeffe2019}, R-tipping may occur for external input rates that are much slower than the  rate of convergence towards the stable state in the frozen system~\cite{Vanselow2019}. 
The reason is that tracking requires the
convergence rate towards the moving stable state to be faster than the
speed of the moving stable state in the phase space.
Thus, if the position of the stable state in the phase space is sufficiently sensitive to
changes in the  input parameters, then R-tipping may occur for external inputs
varying more slowly than the convergence rate towards the stable
state~\cite{Ashwin2012,Ashwin2012cor}.
Moreover, there may be no obvious R-tipping threshold separating initial
conditions that track the moving stable state from those that
R-tip. The separatrix in the nonautonomous system can be an intricate fractal or a non-obvious quasithreshold~\cite{Mitry2013,OSullivan2021,Perryman2015,Wieczorek2011,Xie2019}.
Lastly, reversible R-tipping poses a mathematical challenge to capture transient 
and thus quantitative instabilities because the system exhibits the same asymptotic (long-term) 
behaviour below and above a critical rate.
This has previously made  reversible R-tipping difficult to define rigorously, 
even using modern concepts from the theory of nonautonomous dynamical systems~\cite{Ashwin2016}. These counter-intuitive properties of R-tipping  further motivate and highlight the need for the development of a mathematical framework that is easily accessible to applied scientists.

\subsection{Summary of Main Results and Outline of Paper}

This paper develops an applicable theory of R-tipping via loss of end-point tracking of a moving sink in multidimensional systems for external inputs that vary smoothly with time and decay exponentially to a constant at
infinity. The theory allows us to extend rigorous criteria from~\cite{Ashwin2016} for irreversible R-tipping in one-dimensional (scalar) systems to arbitrary dimension and to different cases of such R-tipping.

The paper is organized as follows. 
Section~\ref{sec:problemsetting} introduces multidimensional {\em nonautonomous systems} with asymptotically constant {\em input parameters}. Additionally, it introduces a {\em rate parameter} $r>0$ that characterises the `rate' of time variation of the input parameter(s) along some input parameter path. 
Section~\ref{sec:NonautonInstab} defines {\em moving sinks} on a time interval $I$, which are hyperbolic sinks of the frozen system parametrised by time for a given time variation of the input parameter(s). Then, it characterises R-tipping from  base attractors that are hyperbolic sinks as failure of the nonautonomous system to track a moving sink via: (i) loss of end-point tracking and (ii) loss of $\delta$-close tracking.
As a starting point for analysis of R-tipping via los of end-point tracking, 
Section~\ref{sec:ThresholdsEdgeFrozen} develops a theory of {\em regular thresholds} and {\em regular edge states} within the frozen system, and defines {\em moving regular thresholds} and {\em moving regular edge states} in a similar way to moving sinks. Roughly speaking, regular edge states are compact  normally hyperbolic invariant sets with orientable codimension-one stable manifolds (one unstable direction), and regular thresholds are forward invariant subsets of stable manifolds of regular edge states. Crucially, Section~\ref{sec:ThresholdsEdgeFrozen} introduces the important and easily verifiable property of {\em (forward) threshold instability} of a (moving) sink. 
Section~\ref{sec:Rtippingdefs} gives a precise definition 
of R-tipping via loss of end-point tracking in multidimensional nonautonomous systems with asymptotically constant inputs in terms of special solutions that limit to a compact invariant set of the future limit system that is not an attractor.
It identifies {\em R-tipping thresholds}  that are typically responsible for loss of end-point tracking  and separate nonautonomous solutions that R-tip from those that do not in such systems.
Additionally, it defines {\em regular R-tipping edge states} and their {\em edge tails}. Regular R-tipping edge states are examples of non-attracting limit sets that anchor the important  {\em regular R-tipping thresholds} at infinity. The new concept of edge tails allows us to classify R-tipping via loss of end-point tracking into {\em reversible, irreversible and degenerate cases} by focusing on limit sets that are regular R-tipping edge states.
Section~\ref{sec:compact} introduces and summarises results from~\cite{Wieczorek2019compact} on compactification of nonautonomous dynamical systems with exponentially asymptotically constant external inputs. It includes the following key technical results. 
Proposition~\ref{prop:invsete-} relates a local pullback attractor anchored at negative infinity by a hyperbolic sink  to an invariant unstable manifold of a hyperbolic saddle in the compactified system,  a regular R-tipping threshold to an invariant stable manifold of the corresponding regular R-tipping edge state in the compactified system, and each edge tail to one branch of the invariant unstable manifold of the regular R-tipping edge state in the compactified system. Proposition~\ref{prop:edgetails} uses these relations to characterise R-tipping in terms of edge tail behaviour.

The main results of the paper are presented in Section~\ref{sec:gentestcrit} for moving sinks and  regular R-tipping edge states that are hyperbolic equilibria. Theorem~\ref{thm:tracking}  shows that nonautonomous solutions track moving sinks of the frozen system, while Theorem~\ref{thm:trackingthresholds} shows that regular
R-tipping thresholds track moving regular thresholds of the frozen system,
as long as the rate parameter $r$ is sufficiently small.
For moving sinks on $I=\R$, Theorem~\ref{thm:Rtip} gives criteria for the existence of R-tipping via loss of end-point tracking  in the nonautonomous system in terms of: (i) threshold instability of a hyperbolic sink in the frozen system  on a given parameter path, and (ii) forward threshold instability of a moving sink of the frozen system for a given time-varying external input. This theorem generalizes results from~\cite{Ashwin2016} for one-dimensional (scalar) systems in the sense that threshold stability does not guarantee tracking in higher-dimensional systems; see for example~\cite{Kiers2018,Xie2019}.
We finish this section by identifying different cases of R-tipping via loss of end-point tracking in the nonautonomous system with a connecting (heteroclinic) orbit in the autonomous compactified system. 
In particular, Proposition~\ref{prop:rtip_compact} identifies
(non-degenerate) reversible and irreversible R-tipping in the nonautonomous system with presence of a non-degenerate connecting (heteroclinic) orbit\footnote{We give non-degeneracy conditions for connecting orbits in Remark~\ref{rmk:nondeghet}.}
to a regular R-tipping edge state in the compactified system. This means that powerful numerical tools for detection and parameter continuation of connecting (heteroclinic) orbits can be applied to practically find critical rates for R-tipping via loss of end-point tracking. 

Finally, Section~\ref{sec:Conclusions} highlights some open questions associated with extending our results to more general settings. These settings include asymptotically constant external inputs that decay slower than exponentially or are not asymptotically constant, R-tipping from more complicated base attractors, involving more complicated R-tipping edge states,  thresholds that are not regular,
quasithresholds that are typically responsible for R-tipping via loss of $\delta$-close tracking, and R-tipping in spatially extended systems modelled by partial differential equations. This paper is complementary to~\cite{Wieczorek2019compact}  which develops the theory of compactification for asymptotically autonomous dynamical systems, and to~\cite{Xie2019} which presents a number of illustrative numerical examples of R-tipping.

\section{The Problem Setting}
\label{sec:problemsetting}

We consider a nonlinear {\em nonautonomous system}
\begin{equation}
\label{eq:ode}
\dot{x}=f(x,\Lambda(t)),
\end{equation} 
with the state variable $x\in\R^n$, time $t\in \R$, $C^1$-smooth time-varying external input 
$\Lambda:\R\to\R^d$, 
and $C^1$-smooth vector field $f:\R^n\times\R^d\to\R^n$,
where $\dot{x}$ denotes ${\textrm d}x/{\textrm d}t$.

\subsection{Parametrised Nonautonomous System: Rates of Change}

We are interested in understanding nonautonomous R-tipping instabilities that appear on varying the time scale or ``rate of change'' of an external input. To address this question, we extend~\eqref{eq:ode} to a {\em parametrised family of nonautonomous systems}
\begin{equation}
\dot{x}=f(x,\Lambda(rt)),
\label{eq:odewithr}
\end{equation}
where $r >0$ is a constant {\em rate parameter}~\cite{Alkhayuon2018,Ashwin2016,Ashwin2012,Wieczorek2011}. We refer to
$t$ as the {\em time scale of the system}, and to $\T=rt$
as the {\em time scale of the external input}.\,\footnote{Note that if $t$ is in units second and $r$ is in units inverse second then $\tau$ is dimensionless.} It is important to note that, typically, both the external input and solutions of \eqref{eq:odewithr} depend on $r$. Therefore, it will be convenient to analyse R-tipping on the time scale $\T$ of the external input, 
where only solutions to the problem depend on $r$.
To this end, we rewrite~\eqref{eq:odewithr} in terms of $\T$, and consider
\begin{equation}
x' = f(x,\Lambda(\tau))/r,
\label{eq:odewithrs}
\end{equation}
where $x'$ denotes ${\textrm d}x/{\textrm d}\T$.

\subsection{Frozen System}

Although R-tipping is a genuine nonautonomous instability
of the nonautonomous system, much can be
understood about R-tipping 
from properties 
of the autonomous {\em frozen system}
\begin{equation}
\label{eq:odea}
\dot{x}=f(x,\lambda),
\end{equation}
with a fixed-in-time {\em input parameter} $\lambda$ corresponding to a possible value of the external input~\cite{Ashwin2016}. 
The frozen system is sometimes called a {\em quasistatic system} or an {\em instantaneous system}. 
We will be interested in  families of equilibria 
of the frozen system \eqref{eq:odea} that vary $C^1$-smoothly with $\lambda$,
which are also referred to as {\em branches of equilibria}.
 Note that, for fixed $r>0$, one can write \eqref{eq:odea} in the time scale of the external input, namely
\begin{equation}
\label{eq:odears}
x'=f(x,\lambda)/r,
\end{equation}
and that \eqref{eq:odea} and \eqref{eq:odears} clearly have the same invariant sets, qualitative stability and  bifurcations on varying $\lambda$.

\subsection{Asymptotically Constant Inputs: Future and Past Limit Systems}
\label{sec:extinp}

\begin{figure}[t]
  \begin{center}
    \includegraphics[width=15.cm]{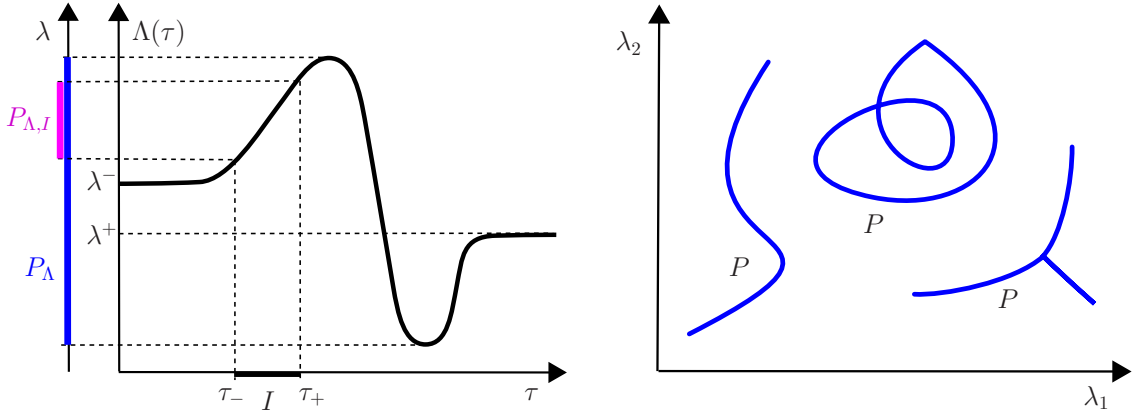}
  \end{center}
  \vspace{-3mm}
  \caption{
  (a) Example of a bi-asymptotically constant (scalar) external input $\Lambda(\T)$  
  with the future limit $\lambda^+$ and the past limit $\lambda^-$, together with two parameter paths: 
   (blue) parameter path $P_\Lambda\subset\R$ traced out by this $\Lambda(\T)$, and (purple)
  parameter path $P_{\Lambda,I}\subset P_\Lambda$
  traced out by this
  $\Lambda(\T)$ on a given time interval $I=(\T_-,\T_+)$. Note that $\lambda^+$ and $\lambda^-$
  do not lie on the boundary of $P_\Lambda$.
  (b) Examples of a parameter path $P$ in $\mathbb{R}^d=\mathbb{R}^2$.
  }
  \label{fig:path}
\end{figure}

When developing a theory of R-tipping, one needs to specify a class of possible external 
inputs $\Lambda(\tau)$. For arbitrary time-dependent inputs, the theory of nonautonomous 
systems \cite{KloedenRasmussen2011} summarises work in this area and gives general results 
on attraction and stability. Here, we focus on a case that is more specific, relevant to applications, and allows us to make further progress on the nonautonomous problem~\eqref{eq:odewithrs}. In particular, it allows us to extend results from~\cite{Ashwin2016} to arbitrary dimension and to different cases of R-tipping. 
To be more precise, we consider 
response of an open system to non-periodic external inputs that 
limit to a constant as time tends to positive and possibly negative infinity:
\begin{defn}
  \label{defn:ac}
We say that $\Lambda(\T)$ is {\em bi-asymptotically constant} with future limit $\lambda^+$ and past limit $\lambda^-$ if 
\begin{equation}
\label{eq:ac1p}
\lim_{\T\to +\infty} \Lambda(\T)= \lambda^+\in\R^d\;\;\mbox{and}\;\; \lim_{\T\to -\infty} \Lambda(\T)= \lambda^-\in\R^d.
\end{equation}
We say $\Lambda(\T)$ is {\em asymptotically constant} if 
one of the limits above exists. 
\end{defn}
\begin{rmk}
  \label{rmk:ac}
A bi-asymptotically constant $\Lambda(\T)$ need not be monotone or indeed one-dimensional,  which is a generalization of the parameter shifts considered in~\cite{Ashwin2016}. For example, for a scalar $\Lambda(\T)$, we do not require the supremum or infimum of $\Lambda(\T)$ to be $\lambda^+$ or $\lambda^-$; see Fig.~\ref{fig:path}(a).
\end{rmk}
Such inputs are used widely in different areas of applied science as mathematical models of finite-time disturbances, saturated growth processes and decay phenomena.  Furthermore, they are a natural choice for defining and analysing R-tipping rigorously: they allow us to identify possible asymptotic states of the system when the disturbance is gone, and discuss changes in the asymptotic state for different rates $r$ of the input.
The main simplification is that  nonautonomous problem~(\ref{eq:odewithrs}) becomes {\em asymptotically autonomous} 
in the terminology of~\cite{Aulbach2006,KloedenRasmussen2011,Markus1956,oljavca2022measure}:
$$
f(x,\Lambda(\T)) \to f(x,\lambda^\pm)\;\;\mbox{as}\;\; \T\to \pm\infty.
$$ 
For the case of bi-asymptotically constant $\Lambda(\T)$ we can define the autonomous {\em future limit system}
\begin{equation}
  \label{eq:odea+}
  \dot{x}=f(x,\lambda^+),
\end{equation} 
and the autonomous {\em past limit system}
\begin{equation}
  \label{eq:odea-}
  \dot{x}=f(x,\lambda^-),
\end{equation} 
which are special cases of the autonomous frozen system~\eqref{eq:odea}.

One of the main contributions of this work is to use autonomous dynamics and compact invariant sets (in particular equilibria and invariant manifolds) of the limit systems~\eqref{eq:odea+} and~\eqref{eq:odea-}
to analyse nonautonomous R-tipping instabilities in system~(\ref{eq:odewithr}) or~(\ref{eq:odewithrs}). 
While  related questions have been  investigated in the past~\cite{Castillo1994,Holmes_Stuart1992,Markus1956,Robinson1996,Thieme1994}, a particular  novelty of our approach is that we relate trajectories of the nonautonomous system~\eqref{eq:odewithrs} and compact invariant sets of the autonomous limit systems~\eqref{eq:odea+} and~\eqref{eq:odea-} to one
autonomous compactified system.
This can be achieved by applying the compactification technique that was developed in~\cite{Wieczorek2019compact} for system~\eqref{eq:ode}  with arbitrary decay of external inputs $\Lambda(t)$. The technique is reviewed in Sec.~\ref{sec:compact} 
from the viewpoint of R-tipping in system~\eqref{eq:odewithrs} and exponentially decaying external inputs $\Lambda(\T)$.

\subsection{Solutions and Trajectories of the Parametrised Nonautonomous System
}
\label{sec:notation}

Throughout the paper, dependence of solutions and trajectories of the nonautonomous system~\eqref{eq:odewithrs} on $r$ is indicated by superscript $[r]$.
For example, we write
$$
 x^{[r]}(\T,x_0,\T_0)\in\mathbb{R}^n,
$$
to denote a solution\,\footnote{This is the flow $x(\T)=\varphi(\T,\T_0,x_0)$ 
written as a process \cite{KloedenRasmussen2011} with the $r$ dependence explicitly shown.
Given a solution $x^{[r]}(\T,x_0,\T_0)$ to system~\eqref{eq:odewithrs}, 
one can easily obtain the corresponding solution to system~\eqref{eq:odewithr} 
by setting $t=\tau/r$ and $t_0=\tau_0/r$. However, it is important to note that, 
for different $r>0$, a fixed initial state $(x_0,\T_0)$ in 
system~\eqref{eq:odewithrs} corresponds to a fixed value of the external 
input $\Lambda(rt_0)$, but different initial states $(x_0,t_0)=(x_0,\T_0/r)$
in system~\eqref{eq:odewithr}.
}
to system~\eqref{eq:odewithrs} at time $\T$ started from $x_0$ at initial time $\T_0$ for a fixed rate $r$. We also write
$$
\mbox{trj}^{[r]}(x_0,\T_0) = \left\{ 
x^{[r]}(\T,x_0,\T_0)\,:\, \T \ge \T_0
\right\} \subset \mathbb{R}^n,
$$
to denote the corresponding trajectory from $(x_0,\T_0)$.
For bi-asymptotically constant inputs $\Lambda(\T)$,
if $e^-$ is a sink for the  autonomous past limit system~\eqref{eq:odea-} and $x^{[r]}(\T,x_0,\T_0)\to e^-$ as $\T\to -\infty$, we write this solution as
\begin{equation}
x^{[r]}(\T,e^-)\in \mathbb{R}^n.\nonumber
\end{equation}
We also write the corresponding trajectory as
$$
\mbox{trj}^{[r]}(e^-) 
= \left\{ 
x^{[r]}(\T,e^-)\,:\, \T \in \R
\right\} \subset \mathbb{R}^n.
$$
If the sink $e^-$ is hyperbolic 
then one can show~\cite{Alkhayuon2018,Ashwin2016} that $x^{[r]}(\T,e^-)$ is unique and can be understood as a {\em local pullback  attractor} for the nonautonomous system~\eqref{eq:odewithrs}.
We sometimes simply write 
\begin{equation}
x^{[r]}(\T)\in \mathbb{R}^n,\nonumber
\end{equation}
to mean either $x^{[r]}(\T,x_0,\T_0)$ or $x^{[r]}(\T,e^-)$, 
and
$$
\mbox{trj}^{[r]} \subset \mathbb{R}^n,
$$
to mean either $\mbox{trj}^{[r]}(x_0,\T_0) $ or $\mbox{trj}^{[r]}(e^-)$,
depending on the context.
Note that solutions $x^{[r]}(\T)$  and trajectories $\mbox{trj}^{[r]}$ started 
from the same initial state $(x_0,\T_0)$, or limiting to the same sink $e^-$, will typically vary nontrivially with $r>0$.

\subsection{Parameter Paths}

To give easily testable criteria for R-tipping, it is convenient to work with a parameter path that is traced out by the external input $\Lambda(\T)$ in the parameter space $\R^d$.  We write $\overline{S}$ to denote the closure\,\footnote{The smallest closed subset of $\R^d$ containing $S$.} of $S$, and define:
\begin{defn}
\label{def:path}
A {\em parameter path}  is a compact subset of the input parameter space $\R^d$, that is the closure of an image of a $C^1$-smooth function from $\R$ to $\R^d$. 
\begin{itemize}
    \item[(a)]
    A given parameter path is denoted by $P$.
     \item[(b)]
     {\em A parameter path traced out by a given external input} $\Lambda(\T)$ is denoted by
    \begin{equation}
    \label{eq:PL}
    P_\Lambda=\overline{\left\{\Lambda(\T)\,:\,\T\in\R \right\}}\subset\R^d.
    \end{equation}
    \item[(c)]
     {\em A parameter path traced out by a given external input $\Lambda(\T)$ on a given time interval} $I=(\T_-,\T_+)$, where $\T_\pm$ may be $\pm\infty$, is denoted by
    \begin{equation}
    \label{eq:PLI}
    P_{\Lambda,I}=\overline{\left\{\Lambda(\T)\,:\,\T\in I \right\}}\subseteq P_\Lambda.
    \end{equation}
\end{itemize}
\end{defn}
\begin{rmk}
\label{rmk:traceout}
Note that 
$P$  can be traced out by (infinitely) many different external inputs,
 $P_{\Lambda,I}$  may be traced out by a given external input $\Lambda$ also on time intervals other than $I$,
$P_\Lambda$ and $P_{\Lambda,I}$ are independent of the rate $r > 0$ of the external input $\Lambda$, and $P_{\Lambda,\R} = P_\Lambda$.
\end{rmk}
Figure~\ref{fig:path} shows examples of (a) $P_\Lambda$
and $P_{\Lambda,I}$ in a one-dimensional parameter space, and (b) examples of $P$ in a two-dimensional 
parameter space~\cite{Alkhayuon2020,OKeeffe2019}. An external input $\Lambda(\T)$ may traverse the 
parameter path $P_\Lambda$ over time in a
complicated manner, for example by moving back and forth along the
path repeatedly, and with a varying speed
$\Vert \Lambda'(\tau)\Vert$, 
as shown in Fig.~\ref{fig:path}(a).
Moreover, the future limit
$\lambda^+$ and, if it exists, the past limit  
$\lambda^-$ of $\Lambda(\T)$ need not lie on the 
boundary of $P_\Lambda$; see also Remark~\ref{rmk:ac}.

\section{ Tracking and Failure to Track of Moving Sinks}
\label{sec:NonautonInstab}

In this section we explore the response of the nonautonomous system~(\ref{eq:odewithr}) or~(\ref{eq:odewithrs}) 
to external inputs $\Lambda$. 
First, we introduce the intuitive concept of a moving sink - 
a smooth family of  instantaneous positions of a hyperbolic sink for the autonomous frozen system~\eqref{eq:odea} that does not depend on the rate parameter $r>0$ when viewed on the external input time scale $\T$. Then, we discuss the relation between the moving sink and rate-dependent solutions $x^{[r]}(\T)$ to system~(\ref{eq:odewithrs}) 
for different but fixed $r>0$. A similar setting was used previously~\cite{Alkhayuon2018,Ashwin2016,Ashwin2012,Longo2021,OKeeffe2019,Perryman2015,Wieczorek2011} 
to understand the dynamical behaviour of~\eqref{eq:odewithrs} in terms of:
\begin{itemize}
    \item[$\bullet$] 
    Tracking of a  moving sink by $x^{[r]}(\T)$ for sufficiently small but non-zero rates $r$.
    \item[$\bullet$] 
    Failure to track a moving sink via a nonautonomous R-tipping instability that can appear at higher  rates $r = r_c$. This includes potential multiple transitions between tracking and tipping as $r$ is increased~\cite{Longo2021,OKeeffe2019,OSullivan2021}.
\end{itemize}

\subsection{Moving Sinks}
\label{sec:movingequ}

We consider a base attractor in the autonomous frozen system~(\ref{eq:odea}) that varies $C^1$-smoothly with $\lambda$. 
Our focus is on a linearly stable equilibrium (a {\em hyperbolic sink}) that continues and does not bifurcate along (some part of) a parameter path traced out by  a given external input $\lambda=\Lambda(\T)$.
We will be interested in how the position of such an equilibrium changes over time. 
\begin{defn}
\label{def:ms}
Suppose the autonomous frozen system~(\ref{eq:odea}) has an equilibrium $e(\lambda)$
for some connected set of values of $\lambda$.  
Consider an external input $\Lambda(\T)$ that traces out a parameter path $P_{\Lambda,I}$ on a time interval $I=(\tau_-,\tau_+)\subseteq\mathbb{R}$, where $\T_\pm$ can be $\pm\infty$.
Then,
\begin{itemize}
\item [(a)] 
We say $e\left(\Lambda(\T)\right)$ is a {\em moving sink on $I$} if $e(\lambda)$ is a hyperbolic sink that varies $C^1$-smoothly with 
$\lambda\in P_{\Lambda,I}$.
\item [(b)] 
If $\Lambda(\T)$ is asymptotically constant to $\lambda^+$ and $e\left(\Lambda(\T)\right)$ is a moving sink on $I = (\tau_-,+\infty)$, we define the {\em future limit} $e^{+}$
 of a moving sink 
$$
e^{+} = e(\lambda^{+}), 
$$
which is a hyperbolic sink for the autonomous future limit system~\eqref{eq:odea+}.\\
If $\Lambda(\T)$ is asymptotically constant to $\lambda^-$ and $e\left(\Lambda(\T)\right)$ is a moving sink on $I = (-\infty,\tau_+)$, we define the {\em past limit} $e^{-}$ of a moving sink 
$$
e^{-} = e(\lambda^{-}),
$$
which is a hyperbolic sink for the autonomous past limit system~\eqref{eq:odea-}.
\end{itemize}
\end{defn}
A moving equilibrium on a time interval $I$ is 
an equilibrium of the autonomous frozen system~(\ref{eq:odea})
parametrised by time $\T\in I$ for a given input $\lambda=\Lambda(\T)$. It is sometimes 
called a {\em quasistatic equilibrium} or an {\em instantaneous equilibrium}.
Guided by the intuition from Fig.~\ref{fig:BR}(b), we often
focus on the special case $I=\R$, namely where moving equilibria continue and do not bifurcate along the whole parameter path $P_\Lambda$ traced out by $\Lambda(\T)$.
Note that the
moving equilibrium $e\left(\Lambda(\T)\right)$  depends on $f$ and  on the 
shape of the external input $\Lambda$, but does not depend on the rate 
parameter $r>0$ (though its eigenvalues do depend on $r$ when viewed 
on the external input timescale $\T$; see Eq.~\eqref{eq:odears}). 
We consider moving equilibria in the phase space $\mathbb{R}^n$ of the nonautonomous system~(\ref{eq:odewithrs}), but note that they are
 not solutions to~(\ref{eq:odewithrs}). However, moving equilibria can serve as a useful point of reference for discussing rate-dependent 
solutions $x^{[r]}(\T)$ to~(\ref{eq:odewithrs}). For example, they can
approximate $x^{[r]}(\T)$  when $r$ is sufficiently small, as we see in Section~\ref{sec:trackingcriteria}.

\subsection{Tracking Moving Sinks}
\label{sec:trackingcriteria}

We will be interested in how  a rate-dependent solution  $x^{[r]}(\T)$ of~(\ref{eq:odewithrs}) changes over time relative to 
a moving sink $e(\Lambda(\T))$  for a given external input $\Lambda(\T)$ and different rates $r>0$.
As noted in~\cite{Alkhayuon2018,Ashwin2016}, there are several ways to 
understand tracking of a moving sink, depending on whether we need closeness at all points in time, or just in the future limit $\T\to +\infty$. The 
following definition formalises this.
\begin{defn} 
 \label{defn:Track}
Consider a nonautonomous system~(\ref{eq:odewithrs}) with an external input $\Lambda(\T)$. Suppose there is a moving  sink $e(\Lambda(\T))$ on $I=(\tau_-,\tau_+)$, where $\T_\pm$ may be $\pm\infty$. For any fixed $\delta>0$ and $r>0$:
\begin{itemize}
    \item[(a)] 
    We say $x^{[r]}(\T)$ {\em $\delta$-close tracks $e(\Lambda(\T))$ on $I$} if
    \begin{equation}
     \label{eq:deltatrack}
     \Vert x^{[r]}(\T) - e(\Lambda(\T))\Vert < \delta\;\;\mbox{for all}\;\;\T\in I.
   \end{equation}
    \item[(b)] Suppose in addition that $\Lambda(\T)$ is asymptotically constant to $\lambda^+$, $e(\Lambda(\T))$ is a moving sink on $I=(\tau_-,+\infty)$, and recall that $e(\Lambda(\T))$ limits to $e^{+}$. Then, we say $x^{[r]}(\T)$ {\em end-point tracks $e(\Lambda(\T))$ on $I$} if $x^{[r]}(\T)$ exists for all $\T\in I$ and
    \begin{equation}
	\label{eq:eptrack}
    x^{[r]}(\T) \to e^+\;\;\mbox{as}\;\; \T\to +\infty.
    \end{equation}
\end{itemize}
\end{defn}
\begin{rmk}
  \label{rmk:ept}
  We  define $\delta$-close tracking for $x^{[r]}(\T)$ on any time interval $I=(\tau_-,\tau_+)$, and end-point tracking for $x^{[r]}(\T)$ on a (semi)infinite time interval $I=(\tau_-,+\infty)$, where $\tau_\pm$ may be $\pm\infty$. This is a generalisation of the $\delta$-close and end-point tracking definitions used in~\cite{Ashwin2016}, which restrict to tracking by pullback attractors $x^{[r]}(\T) = x^{[r]}(e^-,\T)$ on $I=\mathbb{R}$.
\end{rmk}
Theorem~\ref{thm:tracking} gives criteria that sufficiently small rate
parameter $r$ (i.e. slow enough motion of hyperbolic sinks on the system
time scale $t$) will give $\delta$-close and end-point tracking for any $\delta>0$.
Tracking of more complicated attractors\,\footnote{See Appendix~\ref{sec:A4} for the definition of an attractor.} such as limit cycles~\cite{Alkhayuon2020,Alkhayuon2018}, tori and chaotic attractors~\cite{Alkhayuon2020weak,Kaszas2019} is  discussed in Section~\ref{sec:Conclusions} and left for future study.

\subsection{Failure to Track: Nonautonomous R-tipping Instability}
\label{sec:Rtipintro}

We use the notion of R-tipping to describe two types of genuine nonautonomous {\em instabilities} that occur through loss of tracking in the following manner:
\begin{itemize}
    \item[$\bullet$] 
    {\em Loss of end-point tracking:}
    A rate-dependent solution $x^{[r]}(\T)$ fails to end-point track a moving sink $e\left(\Lambda(\T)\right)$ at some 
    rate $r=r_c$~\cite{Ashwin2016,Kuehn2021,Longo2021,OKeeffe2019,Scheffer2008}. This 
    instability
    is a {\em qualitative change}, it can thus be classified as a genuine {\em nonautonomous  bifurcation}.
\item[$\bullet$] 
    {\em Loss of $\delta$-close tracking:}
    For a given $\delta>0$, a rate-dependent solution $x^{[r]}(\T)$ end-point tracks a moving sink $e\left(\Lambda(\T)\right)$ for all $r>0$, but  fails to $\delta$-close track $e\left(\Lambda(\T)\right)$ at some 
    rate $r=r_c(\delta)$ that depends on the choice of $\delta$~\cite{Mitry2013,OSullivan2021,Vanselow2019,Wieczorek2011,Xie2019}. This 
    instability
    is a {\em quantitative change}, but cases of interest may be classified as {\em finite-time bifurcations}~\cite{Rasmussen2010}.
\end{itemize}

This paper gives a rigorous characterisation of R-tipping  that occurs via qualitative ``loss of end-point tracking" in Definition~\ref{defn:Rtip}, and leaves quantitative ``loss of $\delta$-close tracking" for future research.
For example, suppose that $x^{[r]}(\T)\rightarrow e^+$ for $0<r<r_c$ but
$$
x^{[r_c]}(\T)\not\rightarrow e^+\;\;\mbox{as}\;\; \T\to +\infty.
$$
If $x^{[r_c]}(\T)$ remains bounded then  the system undergoes {\em R-tipping} according to our definition. If such an $r_c$ is isolated, we call it a {\em critical rate}.
One aim of this paper is to identify and rigorously define possible cases of such R-tipping.
In doing so, we note that
the critical-rate solution $x^{[r_c]}(\T)$ will typically 
converge to a compact invariant set\,\footnote{Notions of convergence to invariant sets $\eta$ are discussed in Appendix~\ref{sec:A1}.} $\eta^+$:
$$
x^{[r_c]}(\T)\to \eta^+\;\;\mbox{as}\;\; \T\to +\infty,
$$
that is not an attractor, not necessarily an equilibrium, and lies on the basin boundary of a sink $e^+$ in the future limit system~\eqref{eq:odea+}~\cite{OKeeffe2019,Xie2019}. If this set is hyperbolic with one unstable direction and an orientable stable manifold\,\footnote{Note that $\eta^+$ is contained in its stable manifold, that is $\eta^+\subseteq W^s(\eta^+)$.} then we call such an $\eta^+$ a {\em regular R-tipping edge state}. This in turn suggests other important notion:
{\em a regular R-tipping threshold} which contains initial states that converge to the regular R-tipping edge state in the nonautonomous system~\eqref{eq:odewithrs}.

\section{Thresholds and Edge States for Autonomous Frozen Systems}
\label{sec:ThresholdsEdgeFrozen}

We consider thresholds in phase space as invariant sets that have two different sides and, in some sense, give qualitatively  different behaviour for trajectories started on different sides of the threshold.
We introduce different types of threshold, as summarized below:
\begin{itemize}
    \item[$\bullet$] 
    For the autonomous frozen system~(\ref{eq:odea}),  we distinguish in Sec.~\ref{sec:rthr} between {\em regular thresholds} and {\em irregular thresholds}.
    \item[$\bullet$]
    Given a  regular threshold  that varies $C^1$-smoothly with $\lambda$, and a time-varying external input $\Lambda(\T)$, we define in Sec.~\ref{sec:thr} {\em moving regular thresholds} as regular thresholds of the autonomous frozen system~(\ref{eq:odea}) parametrised by time $\T$ for $\lambda=\Lambda(\T)$.
    \item[$\bullet$]
    For the nonautonomous system~\eqref{eq:odewithrs}, we define in Sec.~\ref{sec:Rtip_critrates} {\em regular R-tipping thresholds}. These are nonautonomous forward-invariant sets that separate solutions $x^{[r]}(\T)$ of~\eqref{eq:odewithrs} that R-tip from those that do not. 
\end{itemize}
Definition~\ref{defn:multi_excit} uses regular thresholds to generalise, and in certain sense unify, the concepts of  ``excitability thresholds" for excitable systems~\cite{FitzHugh1955,Izhikevich07,Krauskopf2003,Wieczorek2011} and  ``multi-basin boundaries" for multistable systems~\cite{Pisarchik2014}.

\subsection{Regular Thresholds, Regular Edge States and Excitability}
\label{sec:rthr}

We restrict to thresholds that are 
repelling orientable embedded  manifolds\,\footnote{We recall  some notions used in discussion of differentiable manifolds in Appendix~\ref{sec:A2}.}, which we call {\em regular thresholds}. Thresholds that are repelling 
but not orientable or not embedded manifolds such as the fractal basin boundaries discussed in~\cite{Aguirreetal2009,Kaszas2019,McDonald1985},
we term {\em irregular thresholds} and leave for future study. More precisely:
\begin{defn}
\label{defn:rthr}
In the $n$-dimensional autonomous frozen system~\eqref{eq:odea}, we define a {\em regular threshold} $\theta\subset \R^n$ as a codimension-one  embedded orientable forward-invariant  manifold  that is normally hyperbolic and repelling.
\end{defn}
\begin{rmk}
\label{rmk:edgestatecontraction}
Recall that a forward-invariant manifold $\theta$ is normally hyperbolic 
if perturbations transverse to $\theta$ can be characterised using exponentially growing or decaying modes, and these rates of growth or decay are larger than any rate of growth or decay of perturbations within the manifold. It is normally repelling if all transverse perturbations grow exponentially. More precise statements can be found, for example, in \cite{Fenichel1971,Fenichel1979,Kuehn2011}.
\end{rmk}
\begin{rmk}
\label{rmk:nuqthr}
Any codimension-one forward-invariant subset of a regular threshold is clearly also a regular threshold. In this sense regular thresholds are not unique.
\end{rmk}
\begin{rmk}
\label{rmk:qthr}
There is a close relationship between a regular threshold 
and a basin boundary of an attractor:
\begin{itemize}
\item[(a)]
A regular threshold $\theta$ will
typically be contained in the basin boundary of one or more attractors. 
For example, Fig.~\ref{fig:rthr} depicts regular thresholds $\theta$ that 
lie in the basin boundary of (a) one attractor, (b) two attractors or (c) three attractors.
\item[(b)]
Not all points on the basin boundary need to be in regular thresholds.
In Fig.~\ref{fig:rthr}(a), a regular threshold can be chosen to be any codimension-one connected subset of the stable manifold of $\eta$ containing  $\eta$, in which case there will be parts of the stable manifold that are part of the basin boundary of $e_1$ but not part of the threshold. If a regular threshold is chosen to be the entire stable manifold of $\eta$, as shown (in blue) in the figure, it still has boundary that is a source: this (black dot) source is part of the basin boundary of $e_1$ but not part of the threshold.
\end{itemize}
\end{rmk}

\begin{figure}[t]
  \begin{center}
    \includegraphics[width=16.cm]{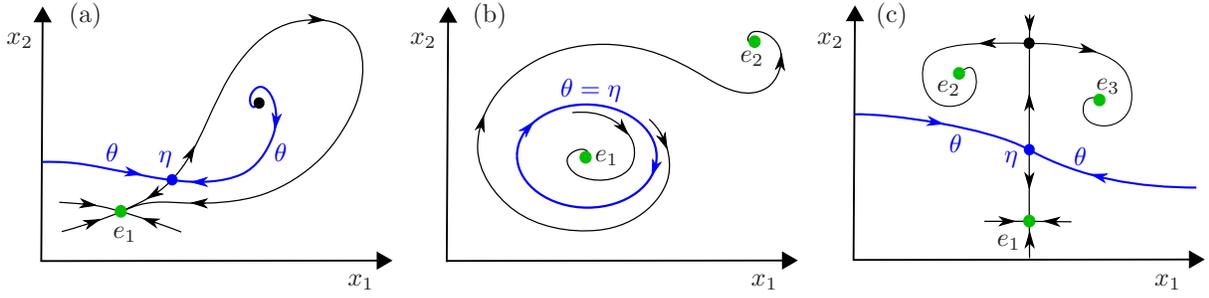}
  \end{center}
  \vspace{-3mm}
  \caption{
Examples of a (blue) regular threshold $\theta$ and the associated regular edge state $\eta$ in a
two-dimensional autonomous frozen system~\eqref{eq:odea}. 
(a) A regular edge state $\eta$ that is a hyperbolic saddle equilibrium. The 
associated regular threshold $\theta$ is any codimension-one forward-invariant 
subset of the stable manifold of $\eta$, so that 
$\eta\subset \theta$. This $\theta$ lies in the basin 
boundary of one attractor,  and the two sides of $\theta$ are in 
the basin of attraction of the same attractor $e_1$.
(b) A regular edge state $\eta$ that is a repelling hyperbolic limit cycle. 
The associated regular threshold is  the same limit cycle,
so that $\eta=\theta$. This $\theta$ lies on the basin boundary of two attractors, and each side of $\theta$ lies in the 
basin of attraction of a different attractor, that is $e_1$ and $e_2$. (c) A regular threshold $\theta$ that lies on the basin boundary of three attractors.
}
\label{fig:rthr}
\end{figure}

%
The assumption of forward invariance means that a regular threshold may contain several invariant sets that are  attractors for the flow restricted to the threshold. Here, we consider compact normally hyperbolic invariant sets $\eta$ that are attracting within $\theta$, together with their stable invariant manifolds\,\footnote{Note that
the stable invariant manifold of $\eta$ contains $\eta$.}, denoted $W^s(\eta)$. 
Using notation inspired by work on fluid instabilities~\cite{Schneideretal07,Schneideretal10,Skufkaetal2006} and climate instabilities~\cite{Ghil2020,Lucarini2017}, we define a regular edge state as follows:
\begin{defn}
\label{defn:edgestate}
In the $n$-dimensional autonomous frozen system~\eqref{eq:odea}, consider a regular threshold $\theta$. We call a compact normally hyperbolic invariant set $\eta\subseteq \theta$ 
a {\em regular edge state} of the regular threshold $\theta$ if
$\eta$ is an attractor\,\footnote{See Appendix~\ref{sec:A4} for the definition of an attractor.} for the flow restricted to $\theta$ and $\theta\subseteq W^s(\eta)$.
\end{defn}
\begin{rmk}
\label{rmk:edgestate0}
Not every regular threshold $\theta$ can be associated with a unique regular edge state $\eta$. For example, points in $\theta$ may limit to a continuum of  equilibria that are neutrally stable within $\theta$, or they may limit to several different attractors within $\theta$ that are regular edge states for forward-invariant subsets of $\theta$.
\end{rmk}
\begin{rmk}
\label{rmk:edgestate}
Recall from Definition~\ref{defn:rthr} that, in an $n$-dimensional frozen system~\eqref{eq:odea}, a regular threshold $\theta$ has dimension $(n-1)$. 
A regular edge state $\eta$ may be of the same or lower dimension than $\theta$.
If  $\eta$ is of the same dimension as $\theta$, then $\eta = \theta = W^s(\eta)$, and $\eta$ is normally repelling.
Examples of such $\eta$ include a repelling equilibrium for $n=1$, a repelling limit cycle for $n=2$ (see Fig.~\ref{fig:rthr}(b)), or more generally a repelling $(n-1)$-torus.
If $\eta$ is of lower dimension than $\theta$, then $\eta\subset\theta\subseteq W^s(\eta)$, and $\eta$
is of saddle type owing to attraction within $\theta$ and normal repulsion of $\theta$.
Examples of such $\eta$ include a saddle equilibrium with one unstable direction  as depicted in Fig.~\ref{fig:rthr}(a) and (c),
a saddle limit cycle with one unstable direction, or a saddle $(n-2)$-torus with one unstable direction.
\end{rmk}

The assumption of normal hyperbolicity implies that it is possible to 
extend regular edge states 
and associated regular thresholds of the frozen system~\eqref{eq:odea} 
to nearby $\lambda$; see~\cite[Theorems 3 and 4]{Fenichel1971}.
We state this rigorously for regular edge states that are hyperbolic
equilibria with precisely one unstable dimension:

\begin{proposition}
\label{prop:edgecontinues}
Suppose that $\eta^*$ is a hyperbolic equilibrium with one unstable direction in the autonomous frozen system \eqref{eq:odea} with $\lambda=\lambda^*$. Then:
\begin{itemize} 
\item[(a)]
The equilibrium $\eta^*$ is a regular edge state. There exists a regular threshold $\theta^*$ that is a forward invariant subset of the stable manifold of $\eta^*$.
\item[(b)]
There is an open neighbourhood $Q$ of $\lambda^*$ in $\R^d$ such that $\eta^*$ can be continued to a family of regular edge states
$\eta(\lambda)$, and $\theta^*$ can be continued to a family of regular thresholds $\theta(\lambda)$ containing $\eta(\lambda)$. These families vary $C^1$-smoothly with $\lambda\in Q$.
\item[(c)]
There is a continuous parametrization of $\theta(\lambda)$ by $x\in \theta^*$ and $\lambda\in Q$. This parameterization can be chosen so that the normal vector $\nu(x,\lambda)$ to $\theta(\lambda)$ varies $C^1$-smoothly with  $\lambda\in Q$.
\end{itemize}
\end{proposition}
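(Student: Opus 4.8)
The three parts split naturally: part (a) is essentially a restatement of the definitions, part (b) is an application of Fenichel's persistence theorem for normally hyperbolic invariant manifolds (cited as \cite[Theorems 3 and 4]{Fenichel1971}), and part (c) is a matter of packaging the persistence conclusion so that it yields a usable, $C^1$-smoothly-varying normal field. The plan is to dispatch (a) directly, quote the implicit-function / persistence machinery for (b), and then extract the parametrization in (c).

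For part (a): since $\eta^*$ is a hyperbolic equilibrium with exactly one unstable direction, its stable manifold $W^s(\eta^*)$ is an embedded $C^1$ manifold of codimension one, it is forward invariant, and normal hyperbolicity is immediate because the unique transverse direction is the unstable eigendirection along which perturbations grow exponentially (and trivially the ``rates within'' the zero-dimensional set $\{\eta^*\}$ impose no constraint). Orientability of $W^s(\eta^*)$ follows because it is the image of an injective immersion of the stable eigenspace $E^s$, which is a linear space. Hence $W^s(\eta^*)$ (or, by Remark~\ref{rmk:nuqthr}, any codimension-one forward-invariant submanifold containing $\eta^*$, e.g.\ an arbitrarily large flow-invariant neighbourhood of $\eta^*$ inside $W^s(\eta^*)$ whose boundary can be taken as in Remark~\ref{rmk:qthr}(b)) is a regular threshold $\theta^*$, and $\eta^*$ is an attractor for the flow restricted to $\theta^*$ with $\theta^*\subseteq W^s(\eta^*)=W^s(\eta^* \text{ in } \theta^*)$, so $\eta^*$ is a regular edge state by Definition~\ref{defn:edgestate}.

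For part (b): the equilibrium $\eta^*$ persists as a hyperbolic equilibrium $\eta(\lambda)$ for $\lambda$ near $\lambda^*$ by the implicit function theorem applied to $f(x,\lambda)=0$, using that $D_x f(\eta^*,\lambda^*)$ is invertible (hyperbolicity); the eigenvalues vary continuously, so the splitting into one unstable and $n-1$ stable directions is preserved on a possibly smaller neighbourhood $Q$. Then $W^s(\eta(\lambda))$, restricted to a fixed compact neighbourhood, is a compact normally hyperbolic manifold-with-boundary, and Fenichel's theorem (or the stable-manifold theorem with parameters) gives that $\theta^*$ persists to a family $\theta(\lambda)$ of codimension-one forward-invariant normally repelling manifolds containing $\eta(\lambda)$, varying $C^1$-smoothly in $\lambda$; the same theorems give the edge-state property carries over. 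I would be slightly careful to note that $W^s$ itself need only be $C^1$ jointly, which is exactly what is claimed.

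For part (c): using that $\theta(\lambda^*)=\theta^*$ and that the family is $C^1$ in $\lambda$, I would build a $C^1$ map $\Phi:\theta^*\times Q\to\R^n$ with $\Phi(\cdot,\lambda)$ a homeomorphism (indeed $C^1$ diffeomorphism) onto $\theta(\lambda)$ and $\Phi(\cdot,\lambda^*)=\mathrm{id}$; such $\Phi$ is produced directly by Fenichel's persistence statement, which represents the perturbed manifold as a graph over the unperturbed one in a tubular neighbourhood. Pulling back the tangent space $T_{\Phi(x,\lambda)}\theta(\lambda)$ along $D_x\Phi$ and taking (say, Gram--Schmidt on) a smooth frame, or simply defining $\nu(x,\lambda)$ as the unit normal to the graph, gives a normal vector field $\nu(x,\lambda)$ that is $C^1$ in $\lambda$ (and continuous in $x$), with a consistent orientation inherited from the fixed orientation of $\theta^*$ established in (a). The main obstacle is purely one of bookkeeping rather than substance: one must ensure the neighbourhood $Q$ is chosen uniformly so that persistence, the hyperbolic splitting, and the graph representation all hold simultaneously, and that orientability is transported coherently — none of which is deep once Fenichel's theorem is invoked, but all of which must be stated with enough care that part (c)'s normal field is genuinely well defined and $C^1$. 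I expect no essential difficulty beyond this uniformity/coherence packaging.
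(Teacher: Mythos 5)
Your proof follows essentially the same route as the paper's for parts (b) and (c): persistence of the equilibrium (you invoke the implicit function theorem, the paper applies Fenichel's persistence to the point $\{\eta^*\}$, but these are interchangeable here), followed by Fenichel-type persistence of the local stable manifold to get $\theta(\lambda)$, then extraction of a graph parametrization and a coherently oriented normal field. So the structure matches.

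However, there is a genuine gap in part (a). You assert that the \emph{global} stable manifold $W^s(\eta^*)$ is an embedded $C^1$ codimension-one manifold that is normally hyperbolic, and take $\theta^*=W^s(\eta^*)$ as the regular threshold. Neither property holds in general. The global stable manifold of a hyperbolic saddle is only an injectively \emph{immersed} submanifold; it can accumulate on itself and fail to be embedded, and orientability can also fail in general (the paper's Appendix~\ref{sec:A2} flags exactly these two failure modes for global stable manifolds). Moreover, normal hyperbolicity in the Fenichel sense is a property that must be verified uniformly over a compact (or inflowing/overflowing invariant) manifold; it cannot be read off from the linearization at the single point $\eta^*$, and the non-compact global $W^s(\eta^*)$ is not a valid input to that framework. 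The paper sidesteps all of this by choosing $\theta^*$ to be the \emph{local} stable manifold $W^s_{loc}(\eta^*)$ given by the stable manifold theorem: a compact, embedded, contractible, forward-invariant codimension-one ball whose normal field is the continuation of the unstable eigenvector. This is precisely the ``forward invariant subset of the stable manifold'' the statement of part (a) asks for. Your parenthetical hedge points in the right direction, but the leading claim that the global $W^s(\eta^*)$ itself qualifies is false, and ``arbitrarily large flow-invariant neighbourhood'' over-claims for the same reason. Notice that in your own argument for (b) you already restrict to a compact piece of $W^s$ in order to apply Fenichel — that restriction is needed from the start, in (a), to make the threshold well defined in the first place.
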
 

\begin{proof}
Note that $\eta^*$ is an unstable node in $\mathbb{R}$, in which case $W^s(\eta^*)=\eta^*$, or a saddle in $\mathbb{R}^{n\ge 2}$, in which case $\eta^*\in W^s(\eta^*)$.

\noindent
(a)	We choose $\theta^*$ to be a local stable manifold of $\eta^*$, denoted $W^s_{loc}(\eta^*)$, as given by the stable manifold theorem; see e.g.~\cite[Thm 2.1.2]{Kuehn2015}. 
This means that $\theta^*$  is topologically a codimension-one ball that is forward invariant, contractable to $\eta^*$, and one can  choose a normal vector (an orientation) 
corresponding to
the unstable eigenvector of $\eta^*$, which varies smoothly with
$x\in\theta^*$.
Thus, $\eta^*$ is a regular edge state and $\theta^*$ is a regular threshold.

\noindent
(b) Applying results of Fenichel on persistence of   normally hyperbolic invariant manifolds that are compact and embedded (see~\cite[Thm 3]{Fenichel1971} or~\cite[Thm 2.3.5]{Kuehn2015}), there is an open neighbourhood $Q$ of $\lambda^*$ such that $\eta^*$ can be continued to a family of hyperbolic equilibria $\eta(\lambda)$ that varies $C^1$-smoothly with  $\lambda\in Q$.
Similarly, applying results on persistence  of stable/unstable manifolds of normally hyperbolic invariant manifolds
(see e.g.~\cite[Thm 4]{Fenichel1971} or~\cite[Thm 2.3.6]{Kuehn2015}), $W^s_{loc}(\eta^*)$ can be $C^1$-smoothly continued to a family of stable manifolds of $\eta(\lambda)$, and each of these manifolds contains a regular threshold $\theta(\lambda)$ that varies $C^1$-smoothly with  $\lambda\in Q$.

\noindent
(c) The continuous parameterization by $(x,\lambda)$ is a consequence of applying results of~\cite{Fenichel1974} and \cite{Fenichel1977} or~\cite[Thm 2.3.12]{Kuehn2015}. 
Orientability implies that there are two choices
of a normal vector $\pm\nu(x)$ that vary smoothly with $x\in\theta^*$ and $\lambda\in Q$, and  thus a well-defined notion of the two sides (e.g. inside/outside) of a regular threshold. 
\end{proof}

To relate our concept of regular thresholds to 
existing literature~\cite{Krauskopf2003,Pisarchik2014}, we distinguish between notions of ``excitability threshold" for excitable
systems and ``multi-basin boundary" for multistable systems as being
different kinds of thresholds.
\begin{defn}
\label{defn:multi_excit}
Let $\theta(\lambda)$ be a regular threshold 
for the autonomous frozen system (\ref{eq:odea}).
\begin{itemize}
\item[(a)] 
If $\theta(\lambda)$ is contained in the basin boundary of two or more
attractors, we say that the autonomous frozen system~\eqref{eq:odea} is {\em multistable} with {\em multi-basin 
boundary $\theta(\lambda)$}.
\item[(b)] 
If $\theta(\lambda)$ is contained in the basin boundary of a 
single attractor, we say that the autonomous frozen system~\eqref{eq:odea} is  {\em excitable} with {\em excitability threshold $\theta(\lambda)$}. 
\end{itemize}
\end{defn}

\subsection{Moving Regular Thresholds and Moving Regular Edge States}
\label{sec:thr}

It follows from Definition~\ref{defn:edgestate} that, if there is a regular edge state $\eta(\lambda)$, then there is a regular threshold $\theta(\lambda)$ containing $\eta(\lambda)$.
For a given external input $\Lambda(\T)$, we use the notion of a parameter path $P_{\Lambda,I}$ from Definition~\ref{def:path} and define
moving regular edge states and moving regular thresholds analogously to moving sinks, namely
as follows:
\begin{defn}
\label{defn:mth}
Suppose the autonomous frozen system~(\ref{eq:odea}) has a codimension-one 
forward-invariant manifold $\theta(\lambda)$ and a compact invariant set $\eta(\lambda)\subseteq\theta(\lambda)$  for some connected set of values of $\lambda$.
 Consider an external input $\Lambda(\T)$ that traces out a parameter path $P_{\Lambda,I}$ on a time interval $I=(\tau_-,\tau_+)\subseteq\mathbb{R}$, where $\T_\pm$ can be $\pm\infty$. Then,
\begin{itemize}
    \item[(a)]
       We say $\theta\left(\Lambda(\T)\right)$ is a {\em moving regular threshold} on $I$ if $\theta(\lambda)$ is a regular threshold that varies $C^1$-smoothly with $\lambda\in P_{\Lambda,I}$.
   
    \item[(b)]
     We say $\eta\left(\Lambda(\T)\right)$ is a {\em moving regular edge state} on $I$ 
    if $\eta(\lambda)$ is a regular edge state that varies $C^1$-smoothly with $\lambda\in P_{\Lambda,I}$.
     
    \item[(c)]
    Suppose that $\Lambda(\T)$ is asymptotically constant to $\lambda^+$, and $\eta\left(\Lambda(\T)\right)$ is a 
    moving regular edge state on $I=(\tau_-,+\infty)$. Then, we define the {\em future limit} $\eta^+$ of the 
    moving regular edge state by
    $$
    \eta^+=\eta(\lambda^+).
    $$
\end{itemize}
\end{defn}
\begin{rmk}
The assumption in (c) implies that $\eta^+$ is a regular edge state  of a regular threshold 
$$
\theta^+=\theta(\lambda^+),
$$
for the autonomous future limit system~\eqref{eq:odea+}.
\end{rmk}
A moving regular threshold (edge state) is 
a regular threshold (edge state) of the autonomous frozen system~(\ref{eq:odea})
parametrised by time $\T$ for a given input $\lambda=\Lambda(\T)$.
Similar to moving sinks, moving regular thresholds (edge states)
are considered in the  phase space $\mathbb{R}^n$ of the nonautonomous system~(\ref{eq:odewithrs}). They
depend on $f$ and on the shape of the external input $\Lambda$, but do not depend on the rate parameter $r>0$ when viewed on the external input timescale $\tau$. 
Regular edge states $\eta^+$ of the future limit system~\eqref{eq:odea+} are particularly important in our work. This is because regular R-tipping thresholds are anchored at infinity by $\eta^+$.

\subsection{Threshold Instability of a Sink}
\label{sec:thr_inst}

A theory of {\em irreversible R-tipping}  in one-dimensional (scalar) nonautonomous system~\eqref{eq:odewithr} or~\eqref{eq:odewithrs}  presented in~\cite{Ashwin2016}
is based on moving sinks on $I=\R$ and the intuitive concept of {\em forward basin 
stability}\,\footnote{Not to be confused with the `static' notion of ``basin stability"
introduced in~\cite{Menck2013} as a measure related to the volume of the 
basin of attraction. 
}
of a moving  sink; see~\cite[Def.3.3]{Ashwin2016}.
To be more specific, a moving sink $e(\Lambda(\T))$ is {\em forward basin stable} if, at each point in time, $e(\Lambda(\T))$ is contained  in the basin of attraction of its every future position\,\footnote{Equivalently,
a moving sink $e(\Lambda(\T))$ is ``forward basin stable" if, at each point in time, the basin of attraction of $e(\Lambda(\T))$ contains all the previous positions of $e(\Lambda(\T))$.}. This concept was used in~\cite[Th.3.2]{Ashwin2016} to derive easily testable criteria for the absence or presence of irreversible R-tipping for a moving sink on $I=\R$ in one dimension:  forward basin stability in autonomous frozen
system~\eqref{eq:odea} 
with $x\in\mathbb{R}$ excludes R-tipping in nonautonomous system~\eqref{eq:odewithr} or~\eqref{eq:odewithrs}, whereas lack of forward basin stability (plus some additional assumptions) in system~\eqref{eq:odea} 
with $x\in\mathbb{R}$ guarantees R-tipping in system~\eqref{eq:odewithr} or~\eqref{eq:odewithrs}. 
The key point in the derivation of these criteria is that, in $\mathbb{R}$, trajectories started within the basin of attraction approach the attractor monotonically in time. Another point is that, 
in $\mathbb{R}$, a typical basin boundary is a boundary of two attractors unless trajectories on one side of the boundary diverge to infinity. Thus, one typically expects irreversible R-tipping in $\R$.

However, a theory that works in arbitrary dimension and captures both irreversible and reversible R-tipping requires a more 
sophisticated understanding. First, the concept of {\em forward basin
stability} from~\cite{Ashwin2016} is no longer useful. If trajectories
started within the basin of attraction can approach the attractor non-monotonically in time, then forward basin stability in system~(\ref{eq:odea}) with $x\in\mathbb{R}^{n\,\ge\,2}$ no longer excludes R-tipping in system~(\ref{eq:odewithr}) or~\eqref{eq:odewithrs}. This is evidenced by examples of irreversible R-tipping  for a moving sink on $I=\R$ occurring in spite of forward basin stability already in two dimensions~\cite{Kiers2018,Xie2019}.  Second, in two or more dimensions, there can be {\em reversible R-tipping} 
due to crossing a basin boundary 
of a single attractor; see Fig.~\ref{fig:rthr}(a). 
The concept of {\em basin instability} from~\cite{OKeeffe2019} addresses only part of the problem: it gives easily testable criteria for the occurrence of irreversible R-tipping for a moving sink on $I=\R$ in multidimensional systems, but is not useful for reversible R-tipping.

To properly address the problem of different cases of R-tipping in arbitrary dimension, we introduce the more general concepts of {\em threshold instability} and {\em forward threshold instability}.
In short, {\em threshold instability} of a hyperbolic sink on a parameter path describes the position of the sink at some points on the path relative to the position of the threshold at different points on the path. To be specific, we introduce two notions. First, we quantify ``relative position of a sink and a threshold" using the signed distance\,\footnote{The signed distance $d_s(x,S)$ is discussed in Appendix~\ref{sec:A3}} between the point $e(\lambda_1)$ and the set $\theta(\lambda_2)$:
\begin{equation}
\label{eq:sdlambda}
d_s(e(\lambda_1),\theta(\lambda_2)).
\end{equation}
Second, we describe $e(\lambda)$ and $\theta(\lambda)$ at ``different points on the path"
by constructing the subset 
$$
P^2 :=P\times P \subset \mathbb{R}^{d\times d},
$$
and viewing pairs $(\lambda_1,\lambda_2)$ of different 
input parameters as elements of this subset. 
We can then define threshold instability, which generalises the notion of basin instability from~\cite{OKeeffe2019}.
\begin{defn} 
\label{def:thun}
Suppose the autonomous frozen system~(\ref{eq:odea}) has a  hyperbolic sink $e(\lambda)$. Consider a parameter path $P$ such that $e(\lambda)$ varies $C^1$-smoothly with $\lambda\in  P$. 
\begin{itemize}
\item [(a)]
We say $e(\lambda)$ is {\em threshold unstable on $P$} if
there exists a $C^1$-smooth family of regular thresholds $\theta(\lambda)$ and a $(\lambda_{a},\lambda_{b})\in P^2$ such that 
$$
 e(\lambda_{a})\in \theta(\lambda_b) \quad\mbox{i.e.} \quad d_s\left(e(\lambda_a),\theta(\lambda_b)\right) = 0,
$$
and
$d_s(e(\lambda_1),\theta(\lambda_2))$ takes both signs 
in any neighbourhood 
of $(\lambda_a,\lambda_b)$ in $P^2$.
\item[(b)]
We say $e(\lambda)$ is {\em basin unstable on $P$} if  it is threshold unstable on $P$, and the threshold $\theta(\lambda_b)$ is contained in a multi-basin boundary.
\end{itemize}
\end{defn}
\begin{rmk}
Note that, if $e(\lambda)$ is threshold unstable, then there is a 
crossing of the threshold  $\theta(\lambda_2)$ from one side to another by the sink $e(\lambda_1)$,
i.e. a passage through zero with a change in the sign of $d_s(e(\lambda_1),\theta(\lambda_2))$. 
In practice, this could happen when: setting $\lambda_{1(2)}=\lambda_{a(b)}$ and varying $\lambda_{2(1)}$ in a neighbourhood of $\lambda_{b(a)}$ in $P$, or varying $\lambda_{1}$ and $\lambda_2$ near $\lambda_{a}$ and $\lambda_b$, respectively,
both in $P$.
\end{rmk}
Threshold instability on a parameter path $P$  in the autonomous frozen system~\eqref{eq:odea} 
indicates that R-tipping is possible in the nonautonomous system~\eqref{eq:odewithr} or~\eqref{eq:odewithrs} 
given a suitable external input that traces out $P$. To understand which external inputs are ``suitable", we consider $\Lambda(\T)$ for which the moving sink $e(\Lambda(\T))$
crosses some future position of a moving regular threshold $\theta(\Lambda(\T))$ from one side to the other. To this end, we introduce a notation for the signed distance at different points in time:
\begin{equation}
\label{eq:sdtau}
\Delta_{\Lambda}(\T_1,\T_2)=d_s(e(\Lambda(\T_1)),\theta(\Lambda(\T_2))),
\end{equation}
consider pairs $(\T_1,\T_2)$ of different points in time  as elements of $\mathbb{R}^2$,
and define forward threshold instability.
\begin{defn}
\label{def:fthun}
Consider some external input $\Lambda(\T)$ and a moving sink $e(\Lambda(\T))$.
\begin{itemize}
    \item [(a)]
    We say $e(\Lambda(\T))$ is {\em forward threshold unstable for $\Lambda(\T)$} if 
    there exist a moving regular threshold $\theta(\Lambda(\T))$ and finite $\T_{a} < \T_b$ such that
    \begin{equation}
    \label{eq:ftunst}
    e(\Lambda(\T_a)) \in \theta(\Lambda(\T_b))\quad\mbox{i.e.} \quad \Delta_{\Lambda}(\T_a,\T_b) = 0,
    \end{equation}
    and $\Delta_{\Lambda}(\T_1,\T_2)$ takes both signs in any neighbourhood 
    of $(\T_a,\T_b)$ in $\mathbb{R}^2$.
    \item[(b)]
    We say this $e(\Lambda(\T))$ is {\em forward basin unstable for $\Lambda(\T)$} if it is forward threshold unstable 
    for $\Lambda(\T)$,
    and the threshold $\theta(\Lambda(\T_b))$ is contained in a multi-basin boundary.
\end{itemize}
\end{defn}
\begin{rmk}
Note that, if a moving sink $e(\Lambda(\T))$ is forward threshold unstable, then, in some sense, there is
crossing of the moving threshold by 
$e(\Lambda(\T))$ from one side to the other.
\end{rmk}
{\em Forward threshold instability} is a property of the
autonomous frozen system~(\ref{eq:odea}) 
and some external input
$\Lambda(\T)$. {\em Threshold instability} is a property of the
frozen system~(\ref{eq:odea}) on a given parameter path $P$. Threshold
instability on a path  $P$ guarantees existence of some input $\Lambda(\T)$ that
traces out this path, meaning that $P_\Lambda=P$, and gives forward threshold
instability. However, there may be other 
inputs $\tilde{\Lambda}(\T) \ne \Lambda(\T)$ that trace out the same
path, meaning that $P_{\tilde{\Lambda}} = P_\Lambda = P$, but do not give forward threshold instability. 
This is illustrated in Fig.~\ref{fig:ftu}.
%
\begin{figure}[t]
  \begin{center}
    \includegraphics[width=15.cm]{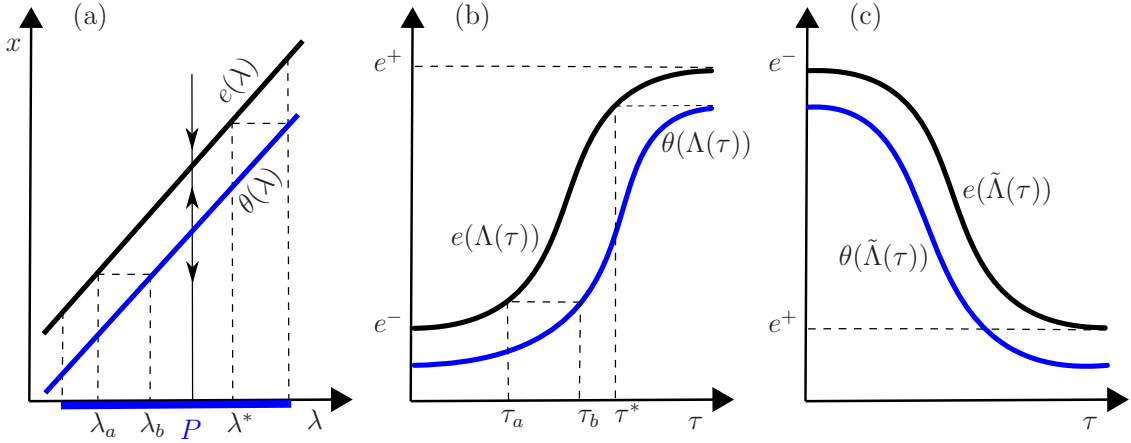}
  \end{center}
  \vspace{-3mm}
  \caption{
    (a) Families (branches) of hyperbolic sinks $e(\lambda)$ and equilibrium regular thresholds $\theta(\lambda)$ for a one-dimensional (scalar) autonomous frozen system~(\ref{eq:odea}), together with a given parameter path $P$. The pair 
    $(\lambda_a,\lambda_b)\in P^2$ indicates threshold instability of $e(\lambda)$ on $P$: 
    for any $\lambda_a\in P$ smaller than $\lambda^*$ there exists a $\lambda_b\in P$ such that $e(\lambda_{a})\in\theta(\lambda_b)$, and $e(\lambda_a)$ can lie on different sides of $\theta(\lambda)$ for $\lambda$ arbitrarily close to $\lambda_b$:
    (b) For a monotone increasing $\Lambda(\T)$ that traces out the path $P$, the moving sink $e(\Lambda(\T))$ is forward threshold unstable 
    because it crosses through future positions of the moving threshold $\theta(\Lambda(\T))$.  For any $\T_a\in(-\infty,\T^*)$ there exist a $\T_b > \T_a$ such that $e(\Lambda(\T))$  at $\T=\T_a$ crosses $\theta(\Lambda(\T_b))$ from one side to the other.
    (c) For a monotone decreasing $\tilde{\Lambda}(\T)$ that traces out the same path $P$, the moving sink $e(\tilde{\Lambda}(\T))$ is forward threshold stable because it never crosses any future position of the moving threshold $\theta(\tilde{\Lambda}(\T))$. There are no finite $\T_a < \T_b$ that can satisfy $e(\tilde{\Lambda}(\T_a))\in\theta(\tilde{\Lambda}(\T_b))$. We say $e(\tilde{\Lambda}(\T))$ is forward threshold stable.
}
  \label{fig:ftu}
\end{figure}

\section{Nonautonomous R-tipping Definitions}
\label{sec:Rtippingdefs}

We now define a {\em nonautonomous R-tipping bifurcation via loss of end-point tracking} in nonautonomous 
system~\eqref{eq:odewithrs}  with asymptotically constant input $\Lambda$,
in a precise yet general context. 
In addition to reversible, irreversible and degenerate cases of R-tipping, we also define critical rates for R-tipping, regular R-tipping edge states and their edge tails, and time-dependent  regular R-tipping thresholds.

\subsection{R-tipping and Critical Rates}
\label{sec:Rtip_critrates}

We start by defining R-tipping and critical rates in terms of limiting behaviour 
of trajectories of the nonautonomous system (\ref{eq:odewithrs}); note that this
generalises the definition of R-tipping in~\cite{Ashwin2016}. 
\begin{defn} 
  \label{defn:Rtip}
    Consider a nonautonomous system~(\ref{eq:odewithrs}) 
    with an external input $\Lambda(\T)$ that is asymptotically constant to $\lambda^+$. 
    Suppose the future limit system~\eqref{eq:odea+} has a compact invariant set 
    $\eta^+$ that is not an attractor\,\footnote{Note that $\eta^+$ is not necessarily  
    a regular edge state from Definition~\ref{defn:mth}(c); it may be a saddle with more than one unstable direction, or even a repeller  of codimension-two or higher, and/or not necessarily hyperbolic.}.
\begin{enumerate}
    \item[(a)] 
    We say the nonautonomous system~(\ref{eq:odewithrs})
    undergoes {\em R-tipping from $(x_0,\tau_0)$} if there are
    $r_1,r_2>0$ 
    such that
    $$
    x^{[r_1]}(\T,x_0,\T_0) \to \eta^+
    \;\;\mbox{and}\;\;
    x^{[r_2]}(\T,x_0,\T_0) \not\to \eta^+
    \;\;\mbox{as}\;\;\T\to+\infty.
    $$
    \item[(b)] 
    Suppose in addition that $\Lambda(\T)$ is bi-asymptotically  constant and the past limit system~(\ref{eq:odea-}) has a hyperbolic sink $e^{-}$. We say the nonautonomous  system~(\ref{eq:odewithrs}) undergoes {\em R-tipping from $e^-$} if there are  $r_1,r_2>0$ such that 
    $$
    x^{[r_1]}(\T,e^-)\to \eta^+
    \mbox{ and }\;\;
    x^{[r_2]}(\T,e^-) \not\to \eta^+\;\;\mbox{as}\;\;\T\to+\infty.
    $$
    \item[(c)] 
    If there is an $r_1>0$ and a $\delta > 0$ 
    such that
    $$
    x^{[r_1]}(\T)\to \eta^+
    \;\;\mbox{and}\;\;
    x^{[r]}(\T)\not\to \eta^+
    \;\;\mbox{as}\;\;\T\to +\infty
    \;\;\mbox{for all}\;\;
    0 < |r - r_1|< \delta,
    $$
    then we say $r_1$ is a {\em critical rate} and denote it with $r_c$.
\end{enumerate}
\end{defn}
\begin{rmk}
\label{rmk:Rtip}
For typical   systems~\eqref{eq:odewithrs} with typical choices of initial condition and the rate parameter $r$, one expects a solution $x^{[r]}(\T)$ that remains bounded to converge to an attractor $a^+$ for the future limit system~\eqref{eq:odea+} rather than converging to something that is not an attractor.

To see this, suppose the future limit system~\eqref{eq:odea+} has a compact invariant set $a^+$ that is an attractor, consider a solution
$$
x^{[r]}(\T) \to a^+\;\;\mbox{as}\;\;\T\to+\infty\;\;\mbox{for some}\;\;r=r_1>0,
$$
and note that this solution can be extended to a family of solutions that is continuous in $\T$, $r$ and initial condition; see for example~\cite[Theorem 3.3]{Robinson1999}. Thus, the same limiting behaviour occurs for an open set of $x$ containing the initial condition and an open set of $r$ containing $r_1$.
A consequence of this robustness to small variations in $r$ is that if the future limit system~\eqref{eq:odea+} has
disjoint compact invariant sets $a_2^+$ and $a_3^+$
that are attractors, and there are rates $0<r_2<r_3$ such that
$$
x^{[r_2]}(\T)\rightarrow a_2^+~\mbox{ and }~x^{[r_3]}(\T)\rightarrow a_3^+~\mbox{ as }~\T\rightarrow +\infty,
$$
then the future limit system~\eqref{eq:odea+} must have at least one compact invariant set $\eta^+$ on the basin boundary of $a_2^+$ and $a_3^+$ that is not an attractor, and there must be at least one rate $r_1\in(r_2,r_3)$ such that  there is R-tipping in the sense of Definition~\ref{defn:Rtip}, namely
$$
x^{[r_1]}(\T)\to\eta^+~\mbox{ as }~\T\rightarrow +\infty.
$$

\end{rmk}

Figure~\ref{fig:Rtip} shows two examples of R-tipping via loss of end-point tracking from Definition~\ref{defn:Rtip} 
for a nonautonomous system~\eqref{eq:odewithrs} on $\R$.\,\footnote{In the one dimensional case, recall that the moving regular threshold and edge state are one and the same.} 
R-tipping from $e^-$ for a moving sink on $I=\R$ that is forward threshold unstable, shown  in Figure~\ref{fig:Rtip}(a), is discussed in~\cite{Ashwin2016} and extended to arbitrary dimension in Section~\ref{sec:Rtippingcriteria}.
However, R-tipping from $e^-$ for a moving sink on a semi-infinite interval $I=(-\infty,\T_+)$ that is forward threshold stable\footnote{We say a moving sink $e(\Lambda(\T))$ on $I$ is forward threshold stable if there are no $\theta(\Lambda(\T))$ and finite $\T_a<\T_b\in I$ that can satisfy condition~\eqref{eq:ftunst}.},
shown in Figure~\ref{fig:Rtip}(b), is not captured by the setting  used in~\cite{Ashwin2016} and Section~\ref{sec:Rtippingcriteria}, which is limited to moving sinks on $I=\R$  that are forward threshold unstable.
To overcome this limitation, we show in Section~\ref{sec:computing} that different R-tipping via loss of end-point tracking, including the example in Figure~\ref{fig:Rtip}(b), can be captured in arbitrary dimension by connecting (heteroclinic) orbits in a suitably compactified system. 

\begin{figure}[t]
  \begin{center}
    \includegraphics[width=14cm]{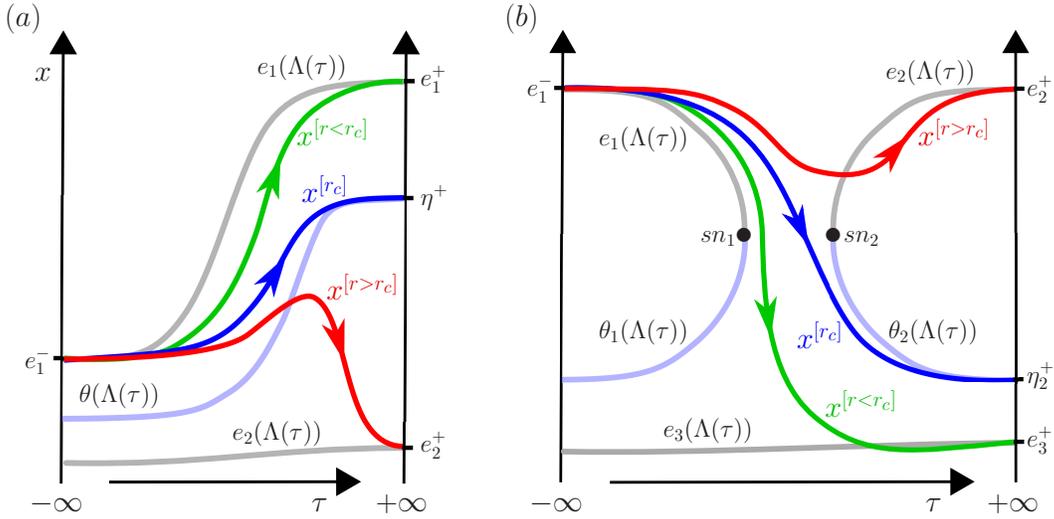}
  \end{center}
  \vspace{-5mm}
  \caption{
  Two examples of R-tipping via loss of end-point tracking from Definition~\ref{defn:Rtip} for the case of a nonautonomous system~\eqref{eq:odewithrs} with  $x\in\R$. As the critical rate crosses $r=r_c$, the trajectory crosses a {\em regular R-tipping threshold} (see Definition~\ref{def:rtipthres}) and limits to an equilibrium that is a {\em regular R-tipping edge state}  (see Definition~\ref{def:rtipedge}). Shown are (grey) moving sinks $e(\Lambda(\T))$, (light blue) moving equilibrium regular thresholds $\theta(\Lambda(\T))$, and trajectories of~\eqref{eq:odewithrs} limiting to a sink $e_1^-$ as $\T\to-\infty$ 
  for different values of the rate parameter: (green) $r<r_c$, (blue) $r=r_c$, and (red) $r>r_c$.
  (a) R-tipping from $e_1^-$ via loss of end-point tracking of $e_1(\Lambda(\T))$, due to crossing the regular R-tipping threshold $\Theta^{[r]}(\T)$ (not shown) anchored at infinity by the equilibrium regular R-tipping  edge state $\eta^+=\theta^+$. 
  Note that $e_1(\Lambda(\T))$ is a moving sink on $I=\R$, 
  that is forward threshold unstable due to $\theta(\Lambda(\T))$.
  (b) R-tipping from $e_1^-$ via loss of end-point tracking of $e_3(\Lambda(\T))$, due to crossing the regular R-tipping threshold $\Theta_2^{[r]}(\T)$ (not shown) anchored at infinity by the equilibrium regular R-tipping  edge state $\eta_2^+=\theta_2^+$. 
  Note that $e_1(\Lambda(\T))$ is a moving sink on a semi-infinite interval $I$, disappears at a finite time via (black dot) a saddle-node bifurcation $sn_1$, and is forward threshold stable, which is different from (a). 
  Furthermore, the saddle-node bifurcation of $e_1(\Lambda(\T))$ gives rise to (green) B-tipping from $e_1^-$ for $r < r_c$~\cite[Definition 3.1]{Ashwin2016}.
  }
  \label{fig:Rtip}
\end{figure}

\subsection{R-tipping Thresholds and R-tipping Edge States}
\label{sec:Rtip_thr}

Next, we recognise the significance of $\eta^+$ that are regular edge states from Definition~\ref{defn:mth}(c).
\begin{defn}
\label{def:rtipedge}
Suppose that a nonautonomous system \eqref{eq:odewithrs} undergoes R-tipping as in Definition~\ref{defn:Rtip}, and $\eta^+$ is a regular edge state  of the future limit system. Then we say $\eta^+$ is a {\em regular R-tipping edge state}.
\end{defn}

Then, we consider  R-tipping thresholds that are anchored at infinity by a regular R-tipping edge state $\eta^+$. These thresholds are regular in the same sense as regular thresholds from Definition~\ref{defn:rthr}.
\begin{defn}
\label{def:rtipthres}
Consider a nonautonomous system~(\ref{eq:odewithrs}) with an 
external input $\Lambda(\T)$ that is asymptotically constant to $\lambda^+$.
Suppose the future limit system~\eqref{eq:odea+} has a regular R-tipping edge state $\eta^{+}$.
We say  $\Theta^{[r]}(\T) \subset\mathbb{R}^n$ is a {\em regular R-tipping threshold} if it is a codimension-one embedded orientable forward-invariant 
subset of the stable set of $\eta^+$.
\end{defn}
By ``forward invariant" we mean that it is forward invariant as a nonautonomous set, i.e.
$$
x_0\in\Theta^{[r]}(\T_0) 
\Rightarrow
x^{[r]}(\T,x_0,\T_0)\in \Theta^{[r]}(\T)\;\;\mbox{for all}\;\;\T>\T_0.
$$
By ``stable set of $\eta^+$" we mean that
\begin{equation}
x_0\in\Theta^{[r]}(\T_0) 
\Rightarrow x^{[r]}(\T,x_0,\T_0)\to \eta^+\;\;\mbox{as}\;\; \T\to +\infty.\label{eq:etastable}
\end{equation}
\begin{rmk}
 Note that:
\begin{itemize}
    \item[(a)] 
    A regular R-tipping threshold $\Theta^{[r]}(\T)$ is a rate and time dependent subset of $\mathbb{R}^n$.
    \item[(b)] 
    We prove existence of regular R-tipping thresholds $\Theta^{[r]}(\T)$ 
    in Proposition~\ref{prop:invsete-}(b1). In particular, we state conditions under which a $\Theta^{[r]}(\T)$ exists for all  $\T > \T_0$ and $r>0$.
    \item[(c)] 
    Any codimension-one forward-invariant subset of a regular R-tipping threshold is clearly also a regular R-tipping threshold. In this sense regular R-tipping thresholds are not unique.
\end{itemize}
\end{rmk}

\subsection{Edge Tails}

We now focus on $\eta^+$ that are regular R-tipping edge states, and introduce for the first time a notion of {\em edge tails} to rigorously classify different cases of R-tipping that may occur via loss of end-point tracking.

Consider a rate-dependent solution $x^{[r]}(\T)$ of the nonautonomous system~\eqref{eq:odewithrs}, started from a fixed $(x_0,\T_0)$ or limiting to a sink $e^-$ as $\T\to -\infty$.
Suppose that end-point tracking of a moving sink $e(\Lambda(\T))$ by 
$x^{[r]}(\T)$
fails for some $r_c >0$ in the sense that
$$
x^{[r_c]}(\T) \to \eta^+\;\;\mbox{as}\;\; \T\to +\infty.
$$
If $\eta^+$ is a regular R-tipping edge state, then the system undergoes R-tipping due to crossing a regular R-tipping threshold $\Theta^{[r]}(\T)$.  If $r_c$ is a critical rate, then
for all $r\neq r_c$ sufficiently close we have
$$
x^{[r]}(\T)\not\to \eta^+\;\;\mbox{as}\;\; \T\to +\infty,
$$
and we generically expect that
$x^{[r<r_c]}(\T)$ and $x^{[r>r_c]}(\T)$ lie on different sides of the regular R-tipping threshold.
To be more precise about 
``lie on different sides of the regular R-tipping threshold'', 
we examine the corresponding trajectory\,\footnote{Recall the notation introduced in Section~\ref{sec:notation}.} $\mbox{trj}^{[r]}$
as the rate parameter $r$ approaches its critical value $r_c$ from above ($r\to r_c^+$) and from below  ($r\to r_c^-$).
The ensuing limit sets\,\footnote{Here, we define
$$
\lim_{r\to r_c^+} \mbox{\normalfont trj}^{[r]}(x_0,\tau_0) = \bigcap_{r>r_c}\; \overline{\bigcup_{r_c < s < r} \mbox{\normalfont trj}^{[s]}(x_0,\tau_0)}
\;\;\mbox{and}\;\;
\lim_{r\to r_c^-} \mbox{\normalfont trj}^{[r]}(x_0,\tau_0) = \bigcap_{r<r_c}\; \overline{\bigcup_{r <s < r_c} \mbox{\normalfont trj}^{[s]}(x_0,\tau_0)}.
$$}
can typically  be decomposed into two components:
\begin{equation}
\lim_{r\to r_c^\pm} \mbox{trj}^{[r]} =
\mbox{trj}^{[r_c]}
\cup x^{[r_c^\pm]}.
\label{eq:trjr}
\end{equation}
The first component, denoted  $\mbox{trj}^{[r_c]}$,
is the trajectory  of the nonautonomous system~\eqref{eq:odewithrs}  
from $x_0$ or $e^-$
to the regular R-tipping edge state $\eta^+$ in $\mathbb{R}^n$, which is common to both limits. 
Note that, being  a projection of a smooth curve from $\mathbb{R}^n\times\mathbb{R}$ onto $\mathbb{R}^n$,
$\mbox{trj}^{[r_c]}$ may intersect itself and  $x^{[r_c^{\pm}]}$.
The second component is either 
$x^{[r_c^+]}$  or $x^{[r_c^-]}$. We define these below as the upper and lower {\em edge tails} of  the regular R-tipping edge state $\eta^+$.   Each edge tail of $\eta^+$ is a (union of) trajectories of the autonomous future limit system~\eqref{eq:odea+} that includes  $\eta^+$ and continues away from 
$\eta^+$ in $\mathbb{R}^n$.
To be more precise, 
\begin{defn}
\label{defn:edgetails}
Consider a nonautonomous system~\eqref{eq:odewithrs}
with an external input $\Lambda(\T)$ that is asymptotically constant to $\lambda^+$.
Suppose the future limit system~\eqref{eq:odea+} has a regular R-tipping edge state $\eta^+$,
and the nonautonomous system~(\ref{eq:odewithrs}) undergoes
 R-tipping for some critical rate $r=r_c>0$
so that
$x^{[r_c]}(\T) \to \eta^+\;\;\mbox{as}\;\; \T\to +\infty$. 
Then, we define the {\em upper edge tail} of $\eta^+$ to be
\begin{align}
&x^{[r_c^+]}=\bigcap_{T>0, ~\delta>0} \overline{\left\{  x^{[r]}(\T)\;:\;\T>T,~r\in(r_c,r_c+\delta)\right\}}\; \subset\mathbb{R}^n,
\end{align}
and the {\em lower edge tail} of $\eta^+$ to be
\begin{align}
&x^{[r_c^-]}=\bigcap_{T>0, ~\delta>0} \overline{\left\{ x^{[r]}(\T)\;:\;\T>T,~r\in(r_c-\delta,r_c)\right\}}\; \subset\mathbb{R}^n.
\end{align}
\end{defn}
Edge tails of $\eta^+$ include $\eta^+$ and trajectories that are contained in the unstable manifold of $\eta^+$, denoted $W^u(\eta^+)$. The upper and lower edge tails are typically different as shown in Fig.~\ref{fig:Rtiptypes}(a) and (b). 
\begin{rmk}
\label{rmk:edgetail}
For an  equilibrium regular R-tipping edge state $\eta^+$, we: 
\begin{itemize}
    \item [(a)]
    Show that each edge tail contains one branch of $W^u(\eta^+)$ in Proposition~\ref{prop:invsete-}(b).
    \item [(b)]
    Relate solutions $x^{[r]}(\T)$ for $r$ on different sides of $r_c$ to the edge tails $x^{[r_c^-]}$ and $x^{[r_c^+]}$ in Proposition~\ref{prop:edgetails}.
\end{itemize}
\end{rmk}

\begin{figure}
  \begin{center}
    \includegraphics[width=11.2cm]{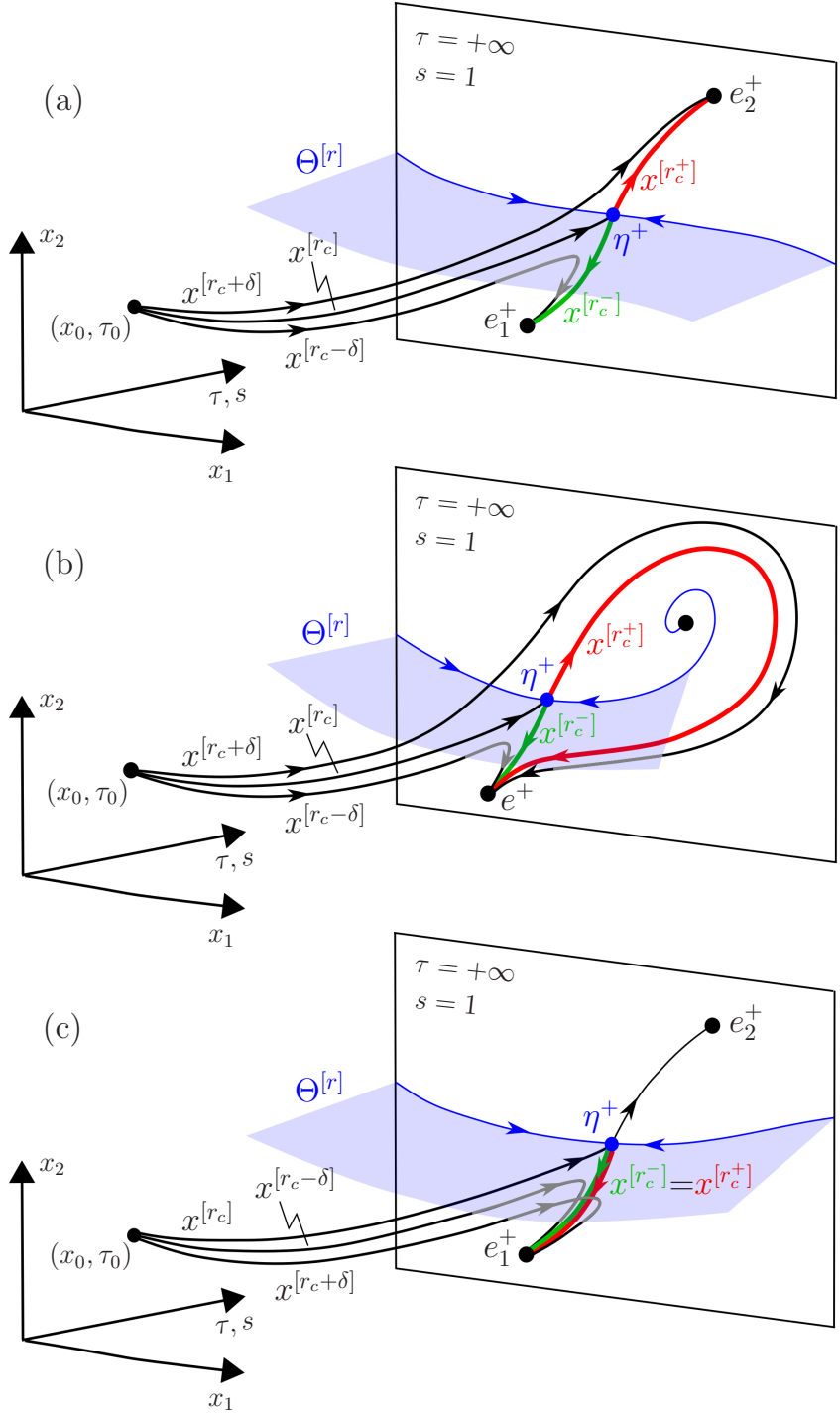}
  \end{center}
  \vspace{-5mm}
  \caption{
  Examples of (a) irreversible, (b) reversible and (c) degenerate  R-tipping via loss of end-point tracking from Definition~\ref{defn:Rtiptypes} for a nonautonomous system~\eqref{eq:odewithrs} on $\mathbb{R}^2$.
  Shown are 
  (thicker black curves) trajectories of~\eqref{eq:odewithrs} started from $(x_0,\T_0)$ for different values of the rate parameter $r=r_c-\delta$, $r=r_c$, and $r=r_c+\delta$,
  (blue dot) the equilibrium regular R-tipping edge state $\eta^+$,  
  the (red) upper $x^{[r_c^+]}$ and (green) lower $x^{[r_c^-]}$ edge tails of $\eta^+$ (note that these contain $\eta^+$),
   (light blue) the rate-dependent family $\Theta^{[r]}$  of time-dependent regular R-tipping thresholds $\Theta^{[r]}(\T)$ defined in~\eqref{eq:Rtipthrfam}, as well as (thinner blue curves) stable 
   and (thinner black curves) unstable manifolds of $\eta^+$ in the future limit system~\eqref{eq:odea+}.
   Note that the projection of $x^{[r^c]}(\T,x_0,\T_0)$ onto the $(x_1,x_2)$ phase plane (not shown in the figure) gives the first component $\mbox{trj}^{[r_c]}(x_0,\T_0)$ in~\eqref{eq:trjr}.
  }
  \label{fig:Rtiptypes}
\end{figure}

\subsection{Reversible, Irreversible and Degenerate R-tipping}
\label{sec:Rtipcasec}

We use the notion of regular R-tipping edge states and their edge tails to classify R-tipping via loss of end-point tracking in nonautonomous system~\eqref{eq:odewithr} or~\eqref{eq:odewithrs} into the following cases. 
\begin{defn} 
 \label{defn:Rtiptypes}
Consider a nonautonomous system~(\ref{eq:odewithrs}) 
with an external input $\Lambda(\T)$ that is asymptotically constant to $\lambda^+$. Suppose the future limit system~\eqref{eq:odea+} has a compact invariant set 
    $\eta^+$ that is not an attractor, and the nonautonomous system~(\ref{eq:odewithrs}) undergoes R-tipping for some $r_1>0$ so that $x^{[r_1]}(\T)\to\eta^+$ as $\T\to +\infty$.
We say this R-tipping is:
\begin{itemize}
    \item[(a)] 
    {\em Non-degenerate}  if  $r_1 = r_c$ is a critical rate,
    $\eta^+$ is a regular R-tipping edge state, the upper and lower edge tails of $\eta^+$ are different: $x^{[r_c^+]}\neq x^{[r_c^-]},$ and each edge tail is a connection from $\eta^+$ to an attractor\,\footnote{See Appendix~\ref{sec:A4} for the definition of an attractor.} for the future limit system~\eqref{eq:odea+}.
    Furthermore, we say non-degenerate R-tipping is
    \begin{itemize}
        \item[$\bullet$]
        {\em Reversible} 
        if each edge tail is a connection from $\eta^+$ to the same attractor.
        \item[$\bullet$]
        {\em Irreversible} if each edge tail is a connection from $\eta^+$ to a different  attractor.
    \end{itemize}
    \item [(b)] 
    {\em Degenerate}  if it is not non-degenerate.
    \end{itemize}
\end{defn}
Examples of different cases of R-tipping are depicted in Fig.~\ref{fig:Rtiptypes}. Only (non-degenerate) irreversible 
and reversible R-tipping,  shown in Fig.~\ref{fig:Rtiptypes}(a) 
and (b), respectively,  are typical in the sense that they are 
generically found at codimension-one in $r$. In other words, 
they are generically found at isolated critical rates $r=r_c$ 
under increasing/decreasing of $r$; see also Remark~\ref{rmk:Rtip}.

Degenerate R-tipping clearly includes many subcases,  even if a regular R-tipping edge state is involved.
One example of degenerate  R-tipping is depicted in 
Fig.~\ref{fig:Rtiptypes}(c), where $\eta^+$ is a regular R-tipping edge state and
the upper and lower edge tails are identical. 
Another example of degenerate R-tipping occurs when 
at least one edge tail is not a connection from $\eta^+$ to an attractor (e.g. an edge tail may connect $\eta^+$ to a saddle, or diverge from $\eta^+$ to infinity; not shown in Fig.~\ref{fig:Rtiptypes}).
Additional examples of degenerate R-tipping involve $\eta^+$ that is not a regular R-tipping edge state.  These include a chaotic saddle  $\eta^+$ with an irregular threshold: a codimension-one stable manifold is not embedded but accumulates on itself, or a repeller  $\eta^+$ of codimension-two (e.g. a source in $\mathbb{R}^2$) or higher that does not have any threshold.
A final example of degenerate R-tipping is the case where there is no critical rate $r_c$: $x^{[r]}(\T)\to\eta^+$ as $\T\to +\infty$ within an interval of $r$.

In the reminder of the paper, we concentrate on R-tipping  due to crossing a regular R-tipping threshold $\Theta^{[r]}(\T)$ anchored at infinity by an equilibrium regular R-tipping edge state $\eta^+$. R-tipping involving more complicated edge states, thresholds that are not regular and quasithresholds, are  discussed in Section~\ref{sec:Conclusions} and left for future study.

\section{Compactification}
\label{sec:compact}

The main obstacle to analysis of genuine nonautonomous R-tipping instabilities in nonautonomous system~(\ref{eq:odewithr})
or~(\ref{eq:odewithrs}) is absence of compact invariant sets such as equilibria, limit cycles or tori. 
We overcome this obstacle by working with  asymptotically constant inputs from Definition~\ref{defn:ac}, $\Lambda(\T)\to\lambda^+$ as $\T\to +\infty$.
Then, the nonautonomous system~(\ref{eq:odewithr})
or~(\ref{eq:odewithrs}) becomes {\em asymptotically autonomous}, and we can define the autonomous {\em future limit system} (\ref{eq:odea+}). More importantly, if 
there is a moving sink $e(\Lambda(\T))$, 
and $e(\Lambda(\T))\to e^+$ as $\T\to +\infty$,
 the future limit system has a hyperbolic sink $e^+$. If there is a moving regular edge state $\eta(\Lambda(\T))$, and $\eta(\Lambda(\T))\to\eta^+$ as $\T\to+\infty$, 
the future limit system has a regular R-tipping edge state $\eta^+$. 
If additionally $\Lambda(\T)\to\lambda^-$ as $\T\to -\infty$,
we can also define the autonomous {\em past limit system}~\eqref{eq:odea-}. If $e(\Lambda(\T))\to e^-$ as $\T\to -\infty$, the past limit system has a hyperbolic sink $e^-$.

Our main idea is to simplify analysis of genuine nonautonomous R-tipping instabilities in system~(\ref{eq:odewithr}) or~(\ref{eq:odewithrs}) by
exploiting the compact invariant sets of interest, such as $e^\pm$ and $\eta^+$, of the autonomous limit systems~\eqref{eq:odea+} and~\eqref{eq:odea-}.
For example, we would like to transform an R-tipping from $e^-$ problem into a heteroclinic $e^--$to$-\eta^+$ orbit problem.
This requires a suitable  compactification of the original nonautonomous system.

In the usual approach~\cite{KloedenRasmussen2011}, the nonautonomous system~(\ref{eq:odewithrs}) 
is augmented with unbounded $\tau\in\mathbb{R}$ as an additional 
dependent variable\,\footnote{By abuse of notation, we use $\T$ to denote both the independent variable and the additional dependent variable.}. This gives the autonomous {\em augmented system}
\begin{align}
\label{eq:odeext0}
\left.
\begin{array}{rl}
 x' &= f(x,\Lambda(\tau))/r\\
\tau'&= 1 
\end{array}\right\},
\end{align} 
defined on $\R^n\times\R$. 
While
the regular R-tipping threshold can nicely be represented in $\R^n\times\R$ as a rate-dependent family of time-dependent subsets of $\mathbb{R}^n$  (see Fig.~\ref{fig:Rtiptypes}):
\begin{equation}
    \label{eq:Rtipthrfam}
    \Theta^{[r]} := \left\{\Theta^{[r]}(\T),\T\right\}_{\T\in\mathbb{R}}
    \subset\mathbb{R}^n\times\mathbb{R},
    \end{equation}
the augmented flow in~\eqref{eq:odeext0} does not contain 
any compact invariant sets
because they only appear as 
$\tau$ tends to positive and negative infinity.

To address this issue, we 
\begin{itemize}
    \item 
    Augment system~(\ref{eq:odewithrs}) with bounded $s\in(-1,1)$ as an additional dependent variable.
    \item 
    Use the compactification technique developed in~\cite{Wieczorek2019compact} to extend the augmented phase space. Specifically, we glue in the limit systems from time infinity ($s=\pm 1$) that carry 
    compact invariant sets such as $e^\pm$ and  $\eta^+$.
\end{itemize}
In short, we require that the additional dependent variable remains within a compact interval.

\subsection{Exponentially Asymptotically Constant Inputs}

Reference~\cite{Wieczorek2019compact} proves existence of a smooth compactification for
nonautonomous system~\eqref{eq:odewithr} or~\eqref{eq:odewithrs} for a wide class of asymptotically constant (possibly non-monotone) inputs\,\footnote{$\Lambda(\tau)$ is denoted $\Gamma(t)$ in~\cite{Wieczorek2019compact}.}  $\Lambda(\tau)$, ranging from super-exponential to sub-logarithmic asymptotic decay (with oscillation). Additionally, it outlines a procedure for constructing suitable examples of time transformation for a given asymptotic decay of $\Lambda(\tau)$.
For simplicity,  we assume here that $\Lambda(\tau)$ decays exponentially, and reformulate the main results from~\cite{Wieczorek2019compact} to account for the presence of the rate parameter $r$. To be precise, 
\begin{defn}
\label{defn:expbac}
We say $\Lambda(\tau)$  is {\em exponentially bi-asymptotically constant} if there is a
{\em decay coefficient} $\rho>0$ such that
\begin{equation}
\label{eq:eas}
\lim_{\tau\to \pm\infty}\frac{\Lambda'(\tau)}{e^{\mp\rho \tau}}\;\;\mbox{exist}.
\end{equation}
We say $\Lambda(\tau)$  is {\em exponentially asymptotically constant} if there is a
 $\rho>0$ such that one of the limits above exists.
\end{defn}

\begin{rmk}
We note that for any bi-asymptotically constant $\Lambda(\tau)$ it is possible to define the slowest rate of exponential approach to a constant as $\T\to\pm\infty$ by
$$
\tilde{\rho}_{\pm} = \lim_{ \T\rightarrow \pm \infty} -\frac{1}{|\T|}\ln \left( \sup_{ u>\T} \|\Lambda'( u)\|\right). 
$$
One can show that $\Lambda$ is exponentially bi-asymptotically constant 
if and only if both of $\tilde{\rho}_{\pm}$ are either positive or $+\infty$. 
Then, a finite decay coefficient $\rho$  in~\eqref{eq:eas} can always be chosen such that $0<\rho<\min(\tilde{\rho}_-,\tilde{\rho}_+)$, and in some special cases\,\footnote{For example, when $\Lambda(\T) \sim C\, e^{\mp\tilde{\rho}\T}\,\T^{n\le0}$ as $\T\to\pm\infty$.} 
such that $0<\rho\le\min(\tilde{\rho}_-,\tilde{\rho}_+)$.
\end{rmk}

\subsection{Autonomous Compactified System}
\label{sec:compautsyst}

Compactification is a three-step process. The first step is 
an $\alpha$-parametrised time transformation that makes the additional dependent variable bounded. Guided by~\cite[Sec.4.2]{Wieczorek2019compact}, we use a transformation designed for exponentially or faster decaying external inputs, and augment the asymptotically autonomous system~(\ref{eq:odewithrs}) with
\begin{equation}
\label{eq:compacttransf}
s = g_{\alpha}(\T) =\tanh\left(\frac{\alpha }{2}\,\T\right) \in (-1,1),
\end{equation}
where $\alpha>0$ is the {\em compactification parameter} that 
is chosen later, in the third step. The inverse is given by
$$
\T = h_{\alpha}(s) = \frac{1}{\alpha}\ln\frac{1+s}{1-s} \in \mathbb{R},
$$
and the augmented component of the vector field is
$$
s' =\alpha\, (1-s^2)/2.
$$
An advantage of the external input time scale $\tau$ is that transformation~\eqref{eq:compacttransf} does not depend on  
the rate parameter $r>0$.
The second step is to make the $s$-interval closed
by including $s=\pm 1$ ($\T=\pm\infty$), and
continuously extend the augmented vector field to $s=\pm 1$.
This gives the autonomous {\em compactified system}
\begin{align}
  \label{eq:odeextbtau}
    \left.
  \begin{array}{rl}
  rx'& = f(x,\Lambda_\alpha(s))\\
  s' &= \alpha(1-s^2)/2
  \end{array}\right\},
\end{align}
with
\begin{align}
  \label{eq:lambda_s+-}
  \Lambda_\alpha(s)&:=
  \left\{
    \begin{array}{rcl}
    \Lambda(h_{\alpha}(s)) &\mbox{for}& s\in(-1,1),\\
    \lambda^+ &\mbox{for}& s = 1,\\
    \lambda^- &\mbox{for}& s = -1,
    \end{array}\right.
\end{align}
that is defined on the {\em extended phase space} $\R^n\times[-1,1]$.
Most importantly, the flow-invariant subspaces 
$$
S^+=\mathbb{R}^n\times\{1\}\;\;\mbox{and}\;\;S^-=\mathbb{R}^n\times\{-1\},
$$
carry the autonomous dynamics and compact invariant sets, such as $e^\pm$ and $\eta^+$,
of the future~\eqref{eq:odea+} and past~\eqref{eq:odea-} limit systems, respectively. 
The third step is to choose the compactification parameter $\alpha$ such that the continuously extended vector field of the compactified system is continuously differentiable ($C^1$-smooth) on $\R^n\times[-1,1]$. This is done in the following proposition.
\begin{proposition}
\label{prop:regular}
Consider a nonautonomous system~\eqref{eq:odewithrs} with exponentially bi-asymptotically constant input $\Lambda(\tau)$ and decay coefficient $\rho>0$. Then, the autonomous compactified  system~\eqref{eq:odeextbtau} is $C^1$-smooth on the extended phase space $\R^n\times[-1,1]$ for any $\alpha\in(0,\rho]$
and all $r >0$.
\end{proposition}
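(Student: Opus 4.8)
The plan is to reduce the claim to a one-variable regularity statement about the extended input map $\Lambda_\alpha\colon[-1,1]\to\R^d$ defined in~\eqref{eq:lambda_s+-}. The $s$-component $\alpha(1-s^2)/2$ of the vector field in~\eqref{eq:odeextbtau} is a polynomial, hence $C^\infty$ on $[-1,1]$ and independent of $r$, so everything rests on the $x$-component $f(x,\Lambda_\alpha(s))/r$. Since $f$ is $C^1$ on $\R^n\times\R^d$ by hypothesis, and multiplication by the positive constant $1/r$ does not affect smoothness, the chain rule shows that $(x,s)\mapsto f(x,\Lambda_\alpha(s))/r$ is $C^1$ on $\R^n\times[-1,1]$ as soon as $\Lambda_\alpha$ is $C^1$ on $[-1,1]$; the $r$-independence of the conclusion is then immediate. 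So the whole proof is: \emph{show $\Lambda_\alpha\in C^1([-1,1])$ for $\alpha\in(0,\rho]$.}

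On the open interval $(-1,1)$ this is routine: $h_\alpha$ is real-analytic, $\Lambda$ is $C^1$, so $\Lambda_\alpha=\Lambda\circ h_\alpha$ is $C^1$ there with $\Lambda_\alpha'(s)=\Lambda'(h_\alpha(s))\,h_\alpha'(s)$ and $h_\alpha'(s)=\tfrac{2}{\alpha(1-s^2)}$. Continuity at the endpoints is also immediate, since $h_\alpha(s)\to+\infty$ as $s\to1^-$ and $h_\alpha(s)\to-\infty$ as $s\to-1^+$, so $\Lambda_\alpha(s)\to\lambda^\pm=\Lambda_\alpha(\pm1)$ by bi-asymptotic constancy. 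The only real content is to show $\Lambda_\alpha'$ extends continuously to $s=\pm1$; once a finite one-sided limit of $\Lambda_\alpha'$ at each endpoint is in hand, I would invoke the standard lemma that a function continuous on $[-1,1]$, $C^1$ on $(-1,1)$, with one-sided limits of its derivative at $\pm1$, is in fact $C^1$ on the closed interval with those limits as the endpoint derivatives.

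To produce the limit at $s=1$ (and symmetrically at $s=-1$) I would use Definition~\ref{defn:expbac} directly. Setting $L^+:=\lim_{\tau\to+\infty}\Lambda'(\tau)e^{\rho\tau}\in\R^d$ (which exists by assumption), we have $\Lambda'(\tau)=e^{-\rho\tau}(L^++o(1))$ as $\tau\to+\infty$. Using $e^{-\rho h_\alpha(s)}=\bigl((1-s)/(1+s)\bigr)^{\rho/\alpha}$ together with the formula for $h_\alpha'$ and the substitution $\tau=h_\alpha(s)$, a short computation gives
\[
\Lambda_\alpha'(s)=\bigl(L^++o(1)\bigr)\,\frac{2}{\alpha}\,\frac{(1-s)^{\rho/\alpha-1}}{(1+s)^{\rho/\alpha+1}}\qquad\text{as }s\to1^-.
\]
For $\alpha\in(0,\rho]$ the exponent $\rho/\alpha-1$ is $\ge0$, so the right-hand side converges as $s\to1^-$ — to $0$ if $\alpha<\rho$, and to $L^+/(2\rho)$ if $\alpha=\rho$ — which is exactly the finite one-sided limit needed. (For $\alpha>\rho$ the exponent is negative and this limit is infinite, so the hypothesis $\alpha\le\rho$ is sharp for $C^1$ smoothness.) Combining the interior and endpoint cases gives $\Lambda_\alpha\in C^1([-1,1])$, hence the compactified system~\eqref{eq:odeextbtau} is $C^1$ on $\R^n\times[-1,1]$ for all $\alpha\in(0,\rho]$ and all $r>0$.

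The hard part is this boundary balancing: one has to match the algebraic singularity of $h_\alpha'(s)$, of order $(1\mp s)^{-1}$, against the exponential decay rate $\rho$ of $\Lambda'$, and the constraint $\alpha\le\rho$ is precisely what makes the singular factor tame after composition. A secondary point worth stating carefully is that the argument uses only the \emph{existence} of the limit in~\eqref{eq:eas}, not its value, so the proof goes through verbatim when $L^+=0$ (then $\Lambda_\alpha'(s)\to0$ even at $\alpha=\rho$); in effect this is just the rate-parameter-decorated version of the exponential-decay case already treated in~\cite{Wieczorek2019compact}.
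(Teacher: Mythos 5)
Your argument is correct, but it takes a different route from the paper. The paper's own proof is a one-line citation: since system~\eqref{eq:odeextbtau} is a compactification of~\eqref{eq:odewithrs} in the sense of~\cite{Wieczorek2019compact}, it simply invokes~\cite[Cor.\,4.1]{Wieczorek2019compact} to conclude $C^1$-smoothness for $\alpha\in(0,\rho]$. You instead unpack the mechanics directly: reduce to $\Lambda_\alpha\in C^1([-1,1])$, compute $\Lambda_\alpha'(s)=\Lambda'(h_\alpha(s))\,h_\alpha'(s)$ with $h_\alpha'(s)=\tfrac{2}{\alpha(1-s^2)}$, and balance the algebraic singularity of $h_\alpha'$ of order $(1\mp s)^{-1}$ against the exponential decay of $\Lambda'$ encoded in Definition~\ref{defn:expbac}. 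The asymptotic
\[
\Lambda_\alpha'(s)=\bigl(L^++o(1)\bigr)\,\frac{2}{\alpha}\,\frac{(1-s)^{\rho/\alpha-1}}{(1+s)^{\rho/\alpha+1}},\qquad s\to1^-,
\]
is correct, as is the endpoint value $L^+/(2\rho)$ at $\alpha=\rho$, and your use of the mean-value-theorem lemma to convert a one-sided limit of $\Lambda_\alpha'$ into $C^1$-smoothness on the closed interval is the right finishing step. The reduction to $\Lambda_\alpha$ and the chain rule with $C^1$-smoothness of $f$ are also exactly what is needed.

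What each approach buys: the paper's citation keeps the proof short and consistent with its stated strategy of outsourcing the compactification machinery to~\cite{Wieczorek2019compact}, which handles far more general decay classes than exponential; your version is self-contained, makes the role of the constraint $\alpha\le\rho$ transparent (the exponent $\rho/\alpha-1$ must be nonnegative), and correctly observes that the argument uses only \emph{existence} of the limit in~\eqref{eq:eas}, not its value. This is in effect a direct re-derivation, specialised to the exponential case and with the rate parameter $r$ carried along, of what~\cite[Cor.\,4.1]{Wieczorek2019compact} supplies. Both are valid; yours is the more informative for a reader who does not have the companion paper to hand.
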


\begin{proof}
For any $r>0$, system~\eqref{eq:odeextbtau} is
a compactification of system~\eqref{eq:odewithrs}.
Thus, we can apply~\cite[Cor.4.1]{Wieczorek2019compact} to~\eqref{eq:odeextbtau} to infer that, for any $\alpha\in(0,\rho]$ and  $r>0$, the compactified system \eqref{eq:odeextbtau} is at least $C^1$-smooth on the compactified phase space $\R^n\times[-1,1]$.
\end{proof}

\subsection{Compactified System as a Singularly Perturbed Fast-Slow System}
\label{sec:csfastslow}

When $0 < r \ll 1$, the compactified system~\eqref{eq:odeextbtau} can be viewed as a singularly perturbed fast-slow system with the small parameter $r$~\cite{Kuehn2015},
where the system time scale $t$ is the {\em fast time}, and the external input time scale $\T=rt$ is the {\em slow time}.
Taking the limit $r\to 0$ in the fast time $t$
in
\begin{align}
\label{eq:odeextb}
    \left.
  \begin{array}{rl}
  \dot{x} &= f(x,\Lambda_\alpha(s))\\
  \dot{s} &= r\,\alpha(1-s^2)/2
  \end{array}\right\},
\end{align}
gives the {\em fast subsystem (the layer problem)}
\begin{equation}
\label{eq:layersyst}
\dot{x} = f(x,\Lambda_\alpha(s)),
\end{equation}
where $s$ becomes a fixed-in-time parameter. Note that this is the frozen system~\eqref{eq:odea} with the input parameter $\lambda = \Lambda_\alpha(s)$.
Taking the limit $r\to 0$ in the slow time  $\tau$ in~\eqref{eq:odeextbtau} gives the {\em slow subsystem (the reduced problem)}
\begin{align}
\label{eq:slowsub}
\left.
\begin{array}{rl}
0  &= f(x,\Lambda_\alpha(s))\\
s' &= \alpha(1-s^2)/2
\end{array}\right\}.
\end{align}
This singular system describes the evolution of $s$ in slow time $\tau$ on
the {\em critical set}
$$
 \tilde{C}^{[0]} = 
\left\{
(x,s)\in \mathbb{R}^n\times[-1,1]
\;:\; 
f(x,\Lambda_\alpha(s)) = 0
\right\},
$$
that consists of all branches of equilibria (critical points) of the fast subsystem~\eqref{eq:layersyst} or~\eqref{eq:odea}.  The critical set $\tilde{C}^{[0]}$ is called the {\em critical manifold} if it is a submanifold of $\mathbb{R}^n\times[-1,1]$. Furthermore, submanifolds of $\tilde{C}^{[0]}$ that consist of hyperbolic equilibria of the fast subsystem~\eqref{eq:layersyst} or~\eqref{eq:odea},
are called {\em normally hyperbolic critical manifolds}~\cite{Fenichel1979,Kuehn2015}.

\subsection{Compact Normally Hyperbolic Critical Manifolds}
\label{sec:NHIM}

The fast-slow viewpoint allows us to represent moving sinks and moving  equilibrium regular edge states as {\em compact normally hyperbolic invariant manifolds} in the extended phase space of the compactified system.
\begin{proposition}
\label{prop:fastslow}
Consider a nonautonomous system~\eqref{eq:odewithrs} 
with exponentially bi-asymptotically constant input $\Lambda(\T)$.
Choose the compactification parameter $\alpha$ that satisfies Proposition~\ref{prop:regular}. 
Consider an interval $I=(\T_-,\T_+)$, let $s_\pm=g_\alpha(\T_\pm)$, and note that  $\T_\pm$ may be $\pm\infty$ in which case $s_\pm=\pm 1$.
Then,
\begin{itemize}
    \item[(a)]
    A moving sink 
    $e(\Lambda(\T))$ on $I=(\T_-,\T_+)$
    in the phase space of the nonautonomous system~\eqref{eq:odewithrs}  can be identified with
    the compact connected normally hyperbolic  attracting critical manifold
    $$
    \tilde{E}_\alpha^{[0]}  =
    \left\{
    \left(e(\Lambda_\alpha(s)),s\right)
    \;:\; s\in[s_-,s_+]
    \right\},
    $$
    in the extended phase space of the compactified system~\eqref{eq:odeextbtau}.
    \item[(b)]
    A moving  equilibrium regular edge state 
    $\eta(\Lambda(\T))$ on $I=(\T_-,\T_+)$ 
    in the phase space of the nonautonomous system~\eqref{eq:odewithrs} 
    can be identified with the compact connected normally hyperbolic critical manifold 
    $$
    \tilde{H}_\alpha^{[0]}  =
    \left\{ \left(\eta(\Lambda_\alpha(s)),s\right) \;:\; s\in[s_-,s_+] \right\}, 
    $$
    in the extended phase space of the compactified system~\eqref{eq:odeextbtau}. $\tilde{H}_\alpha^{[0]}$ is normally repelling if $x\in\mathbb{R}$, or of saddle type with one unstable dimension if $x\in\mathbb{R}^{n\ge 2}$.
\end{itemize}
\end{proposition}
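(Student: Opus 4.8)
The plan is to verify, for both parts, three things in turn: (i) that the stated set is invariant for the compactified flow~\eqref{eq:odeextbtau}, (ii) that it is a manifold (indeed compact and connected), and (iii) that it is normally hyperbolic with the claimed stability type. The invariance is essentially by construction: on each slice $s=\mathrm{const}\in(-1,1)$ the fast subsystem~\eqref{eq:layersyst} is the frozen system~\eqref{eq:odea} with $\lambda=\Lambda_\alpha(s)$, so $e(\Lambda_\alpha(s))$ (resp.\ $\eta(\Lambda_\alpha(s))$) is an equilibrium of that slice by Definition~\ref{def:ms} (resp.\ Definition~\ref{defn:mth}); hence the set lies in the critical set $\tilde C^{[0]}$, and the reduced flow $s'=\alpha(1-s^2)/2$ moves points along the set, so it is invariant for~\eqref{eq:slowsub}. (One should remark that the endpoints $s=\pm1$, when included, carry the equilibria $e^\pm$ resp.\ $\eta^\pm$ of the limit systems~\eqref{eq:odea+}--\eqref{eq:odea-}, which is where $C^1$-smoothness up to the boundary from Proposition~\ref{prop:regular} is used.) That the set is a graph over $s\in[s_-,s_+]$ of the $C^1$ map $s\mapsto e(\Lambda_\alpha(s))$ (resp.\ $s\mapsto\eta(\Lambda_\alpha(s))$) makes it a $C^1$ embedded submanifold diffeomorphic to $[s_-,s_+]$, hence compact and connected; here I would invoke the $C^1$-dependence of $e(\lambda)$ on $\lambda$ from the moving-sink hypothesis, of $\eta(\lambda)$ from Proposition~\ref{prop:edgecontinues}(b), and the smoothness of $\Lambda_\alpha$ from Proposition~\ref{prop:regular}.

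For the normal hyperbolicity and stability type, the key observation is that normal directions to $\tilde E_\alpha^{[0]}$ (resp.\ $\tilde H_\alpha^{[0]}$) inside a fixed slice $s=\mathrm{const}$ are exactly the phase-space directions transverse to the point $e(\Lambda_\alpha(s))$ (resp.\ $\eta(\Lambda_\alpha(s))$), and the linearisation of the fast subsystem~\eqref{eq:layersyst} in these directions is $D_xf(e(\Lambda_\alpha(s)),\Lambda_\alpha(s))$ (resp.\ with $\eta$). By the hyperbolic-sink hypothesis this Jacobian has all eigenvalues with negative real part, uniformly for $s$ in the compact interval $[s_-,s_+]$, so the normal linearisation is uniformly contracting: the manifold $\tilde E_\alpha^{[0]}$ is normally hyperbolic attracting. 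For $\tilde H_\alpha^{[0]}$, the regular-edge-state definition (Definition~\ref{defn:edgestate}, cf.\ Remark~\ref{rmk:edgestate}) says $D_xf$ at $\eta(\lambda)$ has exactly one eigenvalue with positive real part and the rest with negative real part when $n\ge2$ (so saddle type, one unstable direction), while for $n=1$ the single eigenvalue is positive (normally repelling); again uniformity over the compact $s$-interval gives a uniform spectral gap. The tangential rate of growth/decay along the manifold is governed by the slow flow, which in the singular limit $r\to0$ is zero — so the normal rates trivially dominate, which is precisely the normally hyperbolic condition with the rate inequalities satisfied with room to spare (cf.\ Remark~\ref{rmk:edgestatecontraction}).

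Finally I would assemble these into a clean statement: the graph is a compact connected $C^1$ invariant manifold of~\eqref{eq:odeextbtau}, the normal linearisation along it is uniformly hyperbolic of the asserted index because the index is inherited slice-by-slice from the hyperbolic sink / regular edge state and is constant (hence uniform) on the compact parameter interval, and the tangential dynamics in the singular limit is the one-dimensional slow flow which has zero Lyapunov exponents, so the spectral-gap condition defining normal hyperbolicity holds. For $n=1$ this yields normally repelling $\tilde H_\alpha^{[0]}$; for $n\ge2$ it yields saddle type with a one-dimensional unstable normal bundle. The main obstacle, and the step requiring the most care, is handling the endpoints $s=\pm1$: one must check that the graph map extends $C^1$ up to $s=\pm1$ (so that $\tilde E_\alpha^{[0]}$, $\tilde H_\alpha^{[0]}$ really are $C^1$ submanifolds of $\mathbb{R}^n\times[-1,1]$ with boundary), and that the eigenvalue configuration of $D_xf$ persists at the limit values $\lambda^\pm$; the former follows from Proposition~\ref{prop:regular} together with the definition of $\Lambda_\alpha$ in~\eqref{eq:lambda_s+-}, and the latter is immediate once $e^\pm$ is a hyperbolic sink and $\eta^+$ (resp.\ $\eta^-$, where defined) is a regular edge state of the corresponding limit system by Definitions~\ref{def:ms}(b) and~\ref{defn:mth}(c). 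Everything else is routine bookkeeping of the graph-over-$s$ picture and the slice-wise spectrum.
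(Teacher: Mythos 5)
Your proposal is correct and follows essentially the same route as the paper's proof: observe the set is a $C^1$ graph over $s\in[s_-,s_+]$ (hence a compact connected embedded manifold), note it lies in the critical set and inherits its slicewise hyperbolicity from the Jacobian $D_xf$ at the hyperbolic sink (resp.\ regular edge state), and conclude normal hyperbolicity of the asserted index since the tangential rate in the singular limit is zero. One small correction: for part (b) the $C^1$-smooth dependence of $\eta(\lambda)$ on $\lambda\in P_{\Lambda,I}$ is already part of the hypothesis via Definition~\ref{defn:mth}(b) (moving regular edge state), so you should cite that rather than Proposition~\ref{prop:edgecontinues}(b), which only provides local continuation from a single parameter value and is not needed here.
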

\begin{rmk}
Proposition~\ref{prop:fastslow} allows us to apply Fenichel's theorem~\cite[Thm 9.1]{Fenichel1979} to the compactified system~\eqref{eq:odeextbtau} to give criteria for tracking moving sinks and moving equilibrium regular thresholds in the nonautonomous system~\eqref{eq:odewithrs} in Section~\ref{sec:TrackingProof}.
\end{rmk}

\noindent
{\em Proof of Proposition~\ref{prop:fastslow}.}
(a) Note that $\tilde{E}_\alpha^{[0]}$ is a graph over $s$, and
$$
\frac{\textrm d}{{\textrm d}s}\,e(\Lambda_\alpha(s))=
\frac{\textrm d}{{\textrm d}\lambda}e(\lambda)\,\frac{\textrm d}{{\textrm d}s}\Lambda_\alpha(s).
$$
It then follows from Definition~\ref{def:ms}(a) of a moving sink on $I$, and from Prop.~\ref{prop:regular}, 
that  $\tilde{E}_\alpha^{[0]}$ is at least $C^1$-smooth in $s$ on $[s_-, s_+]$. 
For any fixed $s^*\in[s_-,s_+]$, $\tilde{E}_\alpha^{[0]}$ consists of
an equilibrium (a critical point) of the fast subsystem~\eqref{eq:layersyst}, which is exponentially stable (hyperbolic) within, and neutrally stable transverse to  $\{s=s^*\}$. Hence, $\tilde{E}_\alpha^{[0]}$
is a connected attracting normally hyperbolic critical manifold. It is compact because it is a closed and bounded subset of $\mathbb{R}^n\times[-1,1]$.

\noindent
(b)
Note that $\tilde{H}_\alpha^{[0]}$ is a graph over $s$, and
$$
\frac{\textrm d}{{\textrm d}s}\,\eta(\Lambda_\alpha(s))=
\frac{\textrm d}{{\textrm d}\lambda} \eta(\lambda)\,\frac{\textrm d}{{\textrm d}s}\Lambda_\alpha(s).
$$
It then follows from Definition~\ref{defn:mth}(b) of a moving regular edge state on $I$, and from similar arguments to (a), that $\tilde{H}_\alpha^{[0]}$  is a compact connected normally hyperbolic invariant critical manifold. Normal stability of 
$\tilde{H}_\alpha^{[0]}$ follows from 
Definition~\ref{defn:edgestate} of a regular edge state.
\qed

\subsection{Compactified System Dynamics}
\label{sec:csd}

In this section, we discuss the stability of hyperbolic sinks $e^\pm$ and equilibrium regular R-tipping edge states $\eta^+$ from the limit systems when embedded in the extended phase space of the compactified system~\eqref{eq:odeextbtau}. Additionally, we extrapolate the dynamical structure from these states into the new dependent variable $s$ and characterise their stable and unstable invariant manifolds.
In Section~\ref{sec:Conclusions}, we discuss extensions of some of the results below to non-equilibrium attractors and non-equilibrium regular edge states.

\begin{proposition}
\label{prop:csdyn}
Consider a nonautonomous system~(\ref{eq:odewithrs}) with exponentially bi-asymptotically constant input $\Lambda(\T)$ and decay coefficient $\rho>0$. Choose any compactification parameter $\alpha<\rho$.
\begin{itemize}
    \item[(a)] 
    If $e^+$ is a  hyperbolic sink for the future limit system~\eqref{eq:odea+}, then
    $$
    \tilde{e}^+ =(e^+,1)\in S^+,
    $$
    is also a hyperbolic sink when considered in the extended phase space of the compactified system~\eqref{eq:odeextbtau}. 
    The additional eigenvector of $\tilde{e}^+$, denoted $v_+$,
    exists and is normal to the invariant subspace $S^+$ for any
    $\alpha\in(0,\rho)$ and all  $r>0$. Furthermore, $v_+$ is the leading eigenvector of $\tilde{e}^+$ for any $\alpha\in\left(0,\min\{\rho,-\mathrm{Re}(l_1)/r\}\right)$ 
    and all $r>0$, where $l_1$ is the leading eigenvalue of $e^+$ in 
    the future limit system~\eqref{eq:odea+}.
    \item[(b)] 
    If $\eta^+$ is an equilibrium regular R-tipping edge state, then 
    $$
    \tilde{\eta}^+ =(\eta^+,1)\in S^+,
    $$
    is a hyperbolic saddle
    with a codimension-one stable manifold $W_\alpha^{s,[r]}(\tilde{\eta}^+)$,  a codimension-one embedded orientable  local stable manifold 
    $W^{s,[r]}_{\alpha,loc}(\tilde{\eta}^+) \subseteq W_\alpha^{s,[r]}(\tilde{\eta}^+)$,
    and a one-dimensional unstable manifold $W^u(\tilde{\eta}^+)$, when considered in the extended phase space of the compactified system~\eqref{eq:odeextbtau}. 
    The additional eigenvector of $\tilde{\eta}^+$ is normal to the invariant subspace $S^+$ for any
    $\alpha\in(0,\rho)$ and all  $r>0$.
    \item[(c)] 
    If $e^-$ is a hyperbolic sink for the past limit system~\eqref{eq:odea-}, then
    $$
    \tilde{e}^- =(e^-,-1)\in S^-,
    $$
    is a hyperbolic saddle  with a one-dimensional unstable manifold $W_\alpha^{u,[r]}(\tilde{e}^-)$ when considered in the extended phase space of the compactified system~\eqref{eq:odeextbtau}.
    The additional eigenvector of $\tilde{e}^-$ is normal to the invariant subspace $S^-$ for any
    $\alpha\in(0,\rho)$ and all  $r>0$.
\end{itemize}
\end{proposition}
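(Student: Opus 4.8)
The plan is to linearize the compactified vector field~\eqref{eq:odeextbtau} at an equilibrium lying in $S^+$ or $S^-$, exploit the triangular (in fact diagonal) block structure of the resulting Jacobian, and then read off the spectrum and eigenvectors in each of the three cases. First note that system~\eqref{eq:odeextbtau} has equilibria only where $s'=\alpha(1-s^2)/2=0$, i.e. on $S^+\cup S^-$, and on $S^\pm$ the $x$-equation reduces to $f(x,\lambda^\pm)=0$; hence every equilibrium has the form $(e,\pm1)$ with $e$ an equilibrium of the corresponding limit system~\eqref{eq:odea+} or~\eqref{eq:odea-}. Since $s'$ is independent of $x$, the Jacobian of $(x,s)\mapsto\big(f(x,\Lambda_\alpha(s))/r,\ \alpha(1-s^2)/2\big)$ at $(e,s_0)$, $s_0=\pm1$, is block upper triangular,
$$\begin{pmatrix}\tfrac{1}{r}D_xf(e,\lambda^{s_0}) & \tfrac{1}{r}D_\lambda f(e,\lambda^{s_0})\,\Lambda_\alpha'(s_0)\\ 0 & -\alpha s_0\end{pmatrix},$$
with $\lambda^{s_0}=\lambda^+$ for $s_0=1$ and $\lambda^{s_0}=\lambda^-$ for $s_0=-1$. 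Its spectrum is thus the spectrum of $\tfrac{1}{r}D_xf(e,\lambda^{s_0})$ together with the extra eigenvalue $-\alpha s_0$, which equals $-\alpha<0$ on $S^+$ and $+\alpha>0$ on $S^-$. Because $r>0$, division by $r$ merely rescales time and leaves the sign of every real part unchanged, so the hyperbolicity type of $e$ in the limit system is inherited for all $r>0$.

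The second ingredient is that the off-diagonal block vanishes when $\alpha<\rho$, so that the Jacobian is block diagonal and $(0,\dots,0,1)$ is an eigenvector for $-\alpha s_0$. Smoothness of $\Lambda_\alpha$ up to $s=\pm1$, needed both for the Jacobian and for the stable manifold theorem, is provided by Proposition~\ref{prop:regular} for $\alpha\in(0,\rho]$. From the exponential-decay hypothesis of Definition~\ref{defn:expbac} one has $\Lambda'(\tau)=e^{-\rho\tau}(L^++o(1))$ as $\tau\to+\infty$; combined with $s=g_\alpha(\tau)=\tanh(\alpha\tau/2)$ and $h_\alpha'(s)=2/(\alpha(1-s^2))$ this gives
$$\Lambda_\alpha'(s)=\Lambda'(h_\alpha(s))\,h_\alpha'(s)=\frac{2\,(L^++o(1))}{\alpha}\,\frac{(1-s)^{\rho/\alpha-1}}{(1+s)^{\rho/\alpha+1}}\qquad\text{as }s\to1^-,$$
and symmetrically as $s\to-1^+$, so $\Lambda_\alpha'(\pm1)=0$ precisely when $\rho/\alpha>1$. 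For a block upper triangular but non-diagonal Jacobian the eigenvector associated with $-\alpha s_0$ has a nonzero $x$-component, so the claim that the additional eigenvector is normal to $S^{s_0}$ genuinely needs the strict inequality $\alpha<\rho$, not merely $C^1$-smoothness (which already holds at $\alpha=\rho$). Hence for every $\alpha\in(0,\rho)$ and $r>0$ the additional eigenvector is $(0,\dots,0,1)$, normal to $S^{s_0}$, with eigenvalue $-\alpha s_0$.

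The three cases then follow directly. (a) If $e^+$ is a hyperbolic sink of~\eqref{eq:odea+}, every eigenvalue of $\tfrac{1}{r}D_xf(e^+,\lambda^+)$ has negative real part and so does the extra eigenvalue $-\alpha$; hence $\tilde{e}^+$ is a hyperbolic sink, with additional eigenvector $v_+=(0,\dots,0,1)$ normal to $S^+$. This $v_+$ is the leading (slowest-decaying) eigenvector iff $-\alpha>\mathrm{Re}(l_1)/r$, i.e. $\alpha<-\mathrm{Re}(l_1)/r$, which intersected with $\alpha<\rho$ gives the stated range $\alpha\in\big(0,\min\{\rho,-\mathrm{Re}(l_1)/r\}\big)$. (c) If $e^-$ is a hyperbolic sink of~\eqref{eq:odea-}, the $x$-block again contributes only eigenvalues of negative real part, but now $s_0=-1$ so the extra eigenvalue is $+\alpha>0$; hence $\tilde{e}^-$ is a hyperbolic saddle whose one-dimensional unstable manifold $W_\alpha^{u,[r]}(\tilde{e}^-)$ is tangent at $\tilde{e}^-$ to the $s$-direction, the additional eigenvector being $(0,\dots,0,1)$ normal to $S^-$. (b) If $\eta^+$ is an equilibrium regular R-tipping edge state then, by Definition~\ref{defn:edgestate} and Proposition~\ref{prop:edgecontinues}(a), it is a hyperbolic equilibrium of~\eqref{eq:odea+} with exactly one unstable (real, positive) eigenvalue; consequently $\tfrac{1}{r}D_xf(\eta^+,\lambda^+)$ has one eigenvalue of positive real part and $n-1$ of negative real part, and adjoining the extra eigenvalue $-\alpha$ shows $\tilde{\eta}^+$ has exactly one eigenvalue of positive real part and $n$ of negative real part. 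Thus $\tilde{\eta}^+$ is a hyperbolic saddle with one-dimensional unstable manifold $W^u(\tilde{\eta}^+)$ and codimension-one stable manifold $W_\alpha^{s,[r]}(\tilde{\eta}^+)$; the stable manifold theorem provides the codimension-one embedded local stable manifold $W^{s,[r]}_{\alpha,loc}(\tilde{\eta}^+)$, and its orientability follows exactly as in the proof of Proposition~\ref{prop:edgecontinues}(a) by consistently orienting its one-dimensional (hence trivial) normal bundle along the unstable eigenvector.

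The only genuine obstacle I anticipate is the identity $\Lambda_\alpha'(\pm1)=0$ for $\alpha<\rho$: one must check carefully that the exponential-decay hypothesis of Definition~\ref{defn:expbac} indeed forces it (via the estimate above or the corresponding computation in~\cite{Wieczorek2019compact}), and keep track that it is the strict inequality $\alpha<\rho$ that makes the additional eigenvector exactly normal to $S^\pm$. Everything else — the block-triangular structure, the invariance of hyperbolicity type under division by $r>0$, and the stable-manifold and orientability arguments — is routine and parallels Proposition~\ref{prop:edgecontinues}.
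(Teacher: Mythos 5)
Your argument is correct and follows the same route as the paper's own proof: write down the block upper‑triangular Jacobian of the compactified vector field at an equilibrium on $S^{\pm}$, observe that the off‑diagonal block is $\tfrac{1}{r}D_\lambda f\cdot\Lambda_\alpha'(\pm1)$, and use the exponential‑decay hypothesis to show $\Lambda_\alpha'(\pm1)=0$ precisely when $\alpha<\rho$, so the Jacobian is block diagonal and the additional eigenvector is normal to $S^{\pm}$; your computation in the $s$ variable (via $h_\alpha'$) is equivalent to the paper's computation in $\tau$ (via $g_\alpha'$), and the reading-off of spectra in cases (a), (b), (c) and the use of the stable/unstable manifold theorem and orientability from Proposition~\ref{prop:edgecontinues}(a) match the paper's treatment.
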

\begin{rmk}
Note that the shape and relative position of invariant manifolds $W_\alpha^{s,[r]}(\tilde{\eta}^{+})$ and $W_\alpha^{u,[r]}(\tilde{e}^-)$ will typically change with the rate parameter $r$, but these manifolds are guaranteed to respectively meet the invariant subspaces $S^+$ and $S^-$ orthogonally for any $r>0$ if we choose the compactification parameter $\alpha\in(0,\rho)$. The invariant manifold $W^u(\tilde{\eta}^+)$ is independent of $r$ and $\alpha$.
\end{rmk}
\noindent
{\em Proof of Proposition~\ref{prop:csdyn}.}
Note that we impose the limit $0 < \alpha\le \rho$ to ensure the compactified system~\eqref{eq:odeextbtau} is at least $C^1$-smooth; this follows from Prop.~\ref{prop:regular}.
The Jacobian for the compactified system is
\begin{equation}
    \label{eq:jacobian}
J=\left(
\begin{array}{cc}
\frac{1}{r}\left(\frac{\partial f}{\partial x}\right)_{n\times n} & \frac{1}{r}\left(\frac{\partial f}{\partial \Lambda}\,\frac{{\textrm d} \Lambda_\alpha}{{\textrm d} s}\right)_{n\times 1} \\
(0)_{1\times n} & - \alpha s
\end{array}
\right),
\end{equation}
where the subscripts indicate the size of the matrix components of $J$.
Consider linear stability of equilibria $\tilde{e}^\pm$ and $\tilde{\eta}^+$ in the compactified system~\eqref{eq:odeextbtau} on the time scale $\T$.
\\
(a) Equilibrium $\tilde{e}^+$ is a hyperbolic sink.
There are $n$ eigenvalues $q_i = l_i/r$ within $S^+$ that satisfy ${\mathrm Re}(q_n) \le \ldots \le \mathrm{Re}(q_1) < 0$, where $l_i$ are the eigenvalues of ${e}^+$ in the future limit system~\eqref{eq:odea+}, and $S^+$ itself is exponentially attracting, adding one additional negative eigenvalue $q_+ =-\alpha$.
It follows from the structure of the Jacobian~\eqref{eq:jacobian} that the 
additional eigenvector, denoted $v_+$, exists for all $r>0$ if the top $n$ elements in the last column of $J$ are zero
\begin{equation}
\label{eq:orthog}
\frac{\partial f}{\partial \Lambda}({e}^+)\, 
\frac{d\Lambda_\alpha}{ds}(s=1) 
= 
\frac{\partial f}{\partial \Lambda}({e}^+)\, \lim_{\tau\to + \infty}\,\frac{\Lambda'(\tau)}{g'_{\alpha}(\tau)} 
= 0,
\end{equation}
and $v_+$ 
is normal to $S^+$ if and only if~\eqref{eq:orthog} holds.
Noting that $g'_{\alpha}(\tau) \sim 2\alpha \, e^{-\alpha\tau}$ as 
$\tau\to +\infty$, and that $\Lambda(\T)$ decays exponentially with the decay coefficient $\rho>0$, we obtain
$$
\lim_{\tau\to + \infty}\,\frac{\Lambda'(\tau)}{g'_{\alpha}(\tau)} =
\left(\lim_{\tau\to +\infty}\frac{\Lambda'(\tau)}{e^{-\rho \tau}}\right)\,
\left(\lim_{\tau\to +\infty}\frac{e^{-\rho\tau}}{g'_{\alpha}(\tau)}\right)
=
\frac{1}{2\alpha}
\left(
\lim_{\tau\to +\infty}\frac{\Lambda'(\tau)}{e^{-\rho \tau}}
\right)\,
\left(
\lim_{\tau\to +\infty}e^{-(\rho-\alpha) \tau}
\right),
$$
implying that $v_+$ exists  and is normal to $S^+$ 
for any $0<\alpha<\rho$ and all  $r>0$.
Finally, $v_+$ is the leading eigenvector for all $r>0$ if
it exists for all $r>0$, meaning that $0 < \alpha < \rho$, 
and if $-q_+ < -\mathrm{Re}(q_1)$.  
Hence the condition $0<\alpha < \min\{\rho,-\mathrm{Re}(l_1)/r\}$.

\noindent
(b) Equilibrium $\tilde{\eta}^+$ is a hyperbolic saddle with
$n$-dimensional stable eigenspace  $E_\alpha^s(\tilde{\eta}^+)$. 
This is because  $\tilde{\eta}^+$ is either a hyperbolic source ($n=1$) or a hyperbolic saddle with one unstable eigendirection ($n\ge 2$) within $S^+$ by Definition~\ref{defn:edgestate}, and $S^+$ itself is exponentially attracting, adding one (additional) negative eigenvalue $q_+ =-\alpha$.
Note that the additional (generalised) eigenvector is transverse to $S^+$ for all $r>0$. Thus, the stable eigenspace $E_\alpha^s(\tilde{\eta}^+)$ is transverse to $S^+$ for all $r>0$.
It then follows from the stable manifold theorem that, for any $r>0$, there is a unique $C^1$-smooth codimension-one  stable manifold $W_\alpha^{s,[r]}(\tilde{\eta}^+)$ 
that is tangent to $E_\alpha^s(\tilde{\eta}^+)$ at $\tilde{\eta}^+$.
$W_\alpha^{s,[r]}(\tilde{\eta}^+)$ depends on the rate parameter $r$ because the  vector field in~\eqref{eq:odeextbtau} depends on $r$.
Consider a codimension-one forward-invariant local stable manifold $W_{\alpha,loc}^{s,[r]}(\tilde{\eta}^+)$ defined for 
 $s\in(s_0,1]$ with a suitably chosen $s_0$.
It then follows from Definition~\ref{defn:edgestate} of a regular edge state that $W_{\alpha,loc}^{s,[r]}(\tilde{\eta}^+) \cap S^+$ is a codimension-one embedded orientable forward-invariant local stable manifold of $\eta^+$ within $S^+\subseteq \mathbb{R}^n$. 
Since $W_{\alpha,loc}^{s,[r]}(\tilde{\eta}^+)$ intersects $S^+$ transversely, there is an $s_0\in[-1,1)$ such that $W_{\alpha,loc}^{s,[r]}(\tilde{\eta}^+)$ is a graph over $s$ on $(s_0,1]$.
Thus, 
the embedding and orientability properties carry over from $S^+$ to the entire $W_{\alpha,loc}^{s,[r]}(\tilde{\eta}^+)$.
The condition for the stable eigenspace $E_\alpha^s(\tilde{\eta}^{+})$ to be normal to $S^+$ follows from (a). 

\noindent
(c) For any $r>0$, equilibrium $\tilde{e}^{-}$ is a  hyperbolic saddle with 
one-dimensional unstable eigenspace $E_\alpha^u(\tilde{e}^{-})$. This is because $\tilde{e}^{-}$ is a hyperbolic sink within $S^-$, and $S^-$ itself is exponentially repelling, adding one and the only unstable eigendirection with positive eigenvalue $q_- = \alpha$. For any $r>0$, existence of the one-dimensional  unstable manifold $W_\alpha^{u,[r]}(\tilde{e}^-)$ follows from the unstable manifold theorem. 
$W_\alpha^{u,[r]}(\tilde{e}^-)$ depends on  the rate parameter $r$ because the compactified vector field in~\eqref{eq:odeextbtau} depends on $r$.
The condition for the unstable eigendirection $E_\alpha^u(\tilde{e}^{-})$ to be normal to $S^-$ follows from a similar argument to (a). 
\qed

\subsection{Relating Nonautonomous and Compactified System Dynamics}
\label{sec:compactdyns}

We now examine the relationship between:
\begin{itemize}
    \item[(i)]
    Solutions, regular R-tipping thresholds and edge tails in the  nonautonomous system~\eqref{eq:odewithrs},  and 
    \item[(ii)]
    Equilibria $\tilde{e}^-$ and $\tilde{\eta}^+$ as well as their invariant manifolds
    in the autonomous compactified system~\eqref{eq:odeextbtau}.
    \end{itemize}

First, we relate the local pullback attractor $x^{[r]}(\T,e^-)$
to the rate-dependent unstable manifold of $\tilde{e}^-$, the time and
rate dependent  R-tipping threshold $\Theta^{[r]}(\T)$ anchored at infinity by an equilibrium regular R-tipping edge state $\eta^+$ to the rate-dependent
local stable manifold of $\tilde{\eta}^+$,  and associate each edge tail
$x^{[r_c^+]}$ and  $x^{[r_c^-]}$  of $\eta^+$ to a branch of the unstable manifold of $\tilde{\eta}^+$.
\begin{proposition}
\label{prop:invsete-}
Consider a nonautonomous system~(\ref{eq:odewithrs}) with exponentially bi-asymptotically constant input $\Lambda(\T)$ and decay coefficient $\rho$. Choose any compactification parameter $\alpha\in (0,\rho)$.
\begin{itemize}
    \item[(a)]
    Suppose the past limit system~\eqref{eq:odea-} has
    a  sink $e^{-}$. Then,
    \begin{itemize}
        \item[$\bullet$]
        There is a $\tau_0$ such that, for any $r>0$ and all $\T < \tau_0$, there exists a unique local pullback attractor
        $x^{[r]}(\T,e^-)$ in nonautonomous system~\eqref{eq:odewithrs}.
        Note that $\tau_0$ may be $+\infty$.
        \item[$\bullet$]
        For any $r>0$, the local pullback attractor
        $x^{[r]}(\T,e^-)$ in nonautonomous system~\eqref{eq:odewithrs} 
        can be identified with sections of the one-dimensional  unstable manifold 
        $$
        W_\alpha^{u,[r]}(\tilde{e}^{-}) 
        \supset 
        \left\{(x,s)\;\;:\;\; x {\sw \,=\,}  x^{[r]}(\T,e^-),~s=g_{\alpha}(\T) \right\}_{\T<\tau_0},
        $$
        of the saddle $\tilde{e}^- =(e^-,-1)$ in the extended phase space of the compactified system~\eqref{eq:odeextbtau}.
        \end{itemize}
    \item[(b)]
    Suppose the future limit system~\eqref{eq:odea+} has an equilibrium regular  R-tipping edge state $\eta^+$. Then,
    \begin{itemize}
        \item[$\bullet$] 
        There is a $\tau_0$ such that, for any $r>0$ and all $\T>\tau_0$, there exists an R-tipping threshold $\Theta^{[r]}(\T)$ anchored  at infinity by $\eta^+$ in nonautonomous system~\eqref{eq:odewithrs}.
         Note that $\tau_0$ may be $-\infty$.
        \item[$\bullet$]
        For any $r>0$, the R-tipping threshold $\Theta^{[r]}(\T)$ in nonautonomous system~\eqref{eq:odewithrs} can be identified with  sections of the  codimension-one stable manifold
        \begin{align}
        \label{eq:compthr}
        W_\alpha^{s,[r]}(\tilde{\eta}^+) \supset \tilde{\Theta}_\alpha^{[r]}:=
        \left\{ \left(x,s\right)~:~x\in \Theta^{[r]}(\T),~s=g_{\alpha}(\T) \right\}_{\T > \tau_0},
        \end{align}
        of the saddle $\tilde{\eta}^+ =(\eta^+,1)$ in the extended phase space of the compactified system~\eqref{eq:odeextbtau}.
        \item[$\bullet$]
        Each edge tail of $\eta^+$ embedded in the compactified phase space of~\eqref{eq:odeextbtau}, namely
        \begin{equation}
        \label{eq:etsembedded}
        \tilde{x}^{[r_c^+]}=
        \left\{
        (x,1)~:~x\in x^{[r_c^+]}
        \right\}
        \quad\mbox{and}\quad
        \tilde{x}^{[r_c^-]}=
        \left\{
        (x,1)~:~x\in x^{[r_c^-]}
        \right\},
        \end{equation}
        contains one  branch of the  unstable manifold $W^{u}(\tilde{\eta}^+)$ of the saddle $\tilde{\eta}^+ =(\eta^+,1)$.
    \end{itemize}
\end{itemize}
\end{proposition}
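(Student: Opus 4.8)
The plan is to exploit the correspondence between trajectories of the nonautonomous system~\eqref{eq:odewithrs} and trajectories of the autonomous compactified system~\eqref{eq:odeextbtau} restricted to $s\in(-1,1)$, together with the hyperbolic structure of $\tilde e^-$ and $\tilde\eta^+$ established in Proposition~\ref{prop:csdyn}(b)--(c). The key observation is that the map $(x,\tau)\mapsto(x,g_\alpha(\tau))$ is a diffeomorphism from $\mathbb R^n\times\mathbb R$ onto $\mathbb R^n\times(-1,1)$ that conjugates the augmented flow of~\eqref{eq:odeextbtau} on $\{|s|<1\}$ (after the harmless time reparametrisation $d\tau/ds = 2/(\alpha(1-s^2))$) with the flow of~\eqref{eq:odeextbtau}; so solutions of~\eqref{eq:odewithrs} correspond exactly to orbit segments of~\eqref{eq:odeextbtau} in the slab $|s|<1$, and limits as $\tau\to\pm\infty$ correspond to accumulation onto $S^\pm$.

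\medskip\noindent\emph{Part (a).} First I would recall from~\cite{Ashwin2016,Alkhayuon2018} that a hyperbolic sink $e^-$ of~\eqref{eq:odea-} gives rise to a unique local pullback attractor $x^{[r]}(\tau,e^-)$ on some interval $(-\infty,\tau_0)$; in the compactified picture this is precisely the statement that $\tilde e^-=(e^-,-1)$ is a hyperbolic saddle (Proposition~\ref{prop:csdyn}(c)) with a one-dimensional unstable manifold $W^{u,[r]}_\alpha(\tilde e^-)$ leaving $S^-$ transversally. The unstable manifold theorem gives a unique orbit in $W^{u,[r]}_\alpha(\tilde e^-)$ that limits to $\tilde e^-$ as the compactified time goes to $-\infty$, equivalently as $s\to -1^+$, equivalently as $\tau\to-\infty$; projecting this orbit back through $g_\alpha^{-1}$ yields the pullback attractor. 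Uniqueness transfers from the uniqueness of $W^{u,[r]}_\alpha(\tilde e^-)$ as a one-dimensional invariant manifold tangent to the single unstable eigendirection. The value $\tau_0$ is $+\infty$ precisely when this branch of the unstable manifold never leaves the slab, i.e.\ stays bounded for all $\tau$.

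\medskip\noindent\emph{Part (b).} For the first two bullets I would argue symmetrically using $\tilde\eta^+=(\eta^+,1)$, which by Proposition~\ref{prop:csdyn}(b) is a hyperbolic saddle with a codimension-one stable manifold $W^{s,[r]}_\alpha(\tilde\eta^+)$ meeting $S^+$ transversally, and whose local stable manifold restricted to a graph over $s\in(s_0,1]$ inherits embeddedness and orientability from $\eta^+$'s stable manifold inside $S^+$. Intersecting this local stable manifold with each level set $\{s=g_\alpha(\tau)\}$ and pulling back gives, for every $\tau>\tau_0$ (where $s_0=g_\alpha(\tau_0)$, possibly $\tau_0=-\infty$), a codimension-one embedded orientable set $\Theta^{[r]}(\tau)\subset\mathbb R^n$; forward invariance as a nonautonomous set is exactly forward invariance of $W^{s,[r]}_\alpha(\tilde\eta^+)$ under the compactified flow (increasing $s$ means increasing $\tau$), and the defining property~\eqref{eq:etastable} that points of $\Theta^{[r]}(\tau)$ converge to $\eta^+$ is the statement that orbits on the stable manifold converge to $\tilde\eta^+$, hence $s\to1$ and $x\to\eta^+$. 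For the third bullet I would use Definition~\ref{defn:edgetails} of the edge tails together with continuous dependence of solutions on $r$: as $r\to r_c^\pm$, the trajectory $\mbox{trj}^{[r]}$ accumulates, beyond any bounded time window, onto the $\omega$-limit behaviour of the critical trajectory, which lands at $\tilde\eta^+$; since $\tilde\eta^+$ is a saddle with a one-dimensional unstable manifold $W^u(\tilde\eta^+)$ lying entirely in $S^+$ (it is the continuation of $W^u(\eta^+)$, independent of $r$ and $\alpha$), the accumulation set must contain one of the two branches of $W^u(\tilde\eta^+)$ — the branch determined by which side of the codimension-one $W^{s,[r]}_\alpha(\tilde\eta^+)$ the perturbed trajectories lie on. The $\lambda$-lemma (inclination lemma) applied at the saddle $\tilde\eta^+$ makes this precise: trajectories passing near $\tilde\eta^+$ on a fixed side shadow the corresponding branch of the unstable manifold.

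\medskip\noindent\textbf{Main obstacle.} The delicate step is the third bullet of~(b): showing rigorously that the one-sided edge-tail limit set $\tilde x^{[r_c^\pm]}$ contains an \emph{entire} branch of $W^u(\tilde\eta^+)$, not merely accumulates on $\tilde\eta^+$. This requires combining (i) the $\lambda$-lemma at the hyperbolic saddle $\tilde\eta^+$ to propagate trajectories that pass close to $\tilde\eta^+$ along the unstable manifold, with (ii) a careful argument that for $r$ on a fixed side of $r_c$ the trajectories $\mbox{trj}^{[r]}$ do pass arbitrarily close to $\tilde\eta^+$ (which follows from continuity in $r$ and the fact that $x^{[r_c]}(\tau)\to\eta^+$), and (iii) controlling the intersection pattern with the moving codimension-one stable manifold $W^{s,[r]}_\alpha(\tilde\eta^+)$, whose position varies with $r$, to guarantee that the perturbed trajectories stay on one definite side. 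Handling the set-theoretic $\liminf/\limsup$ bookkeeping in Definition~\ref{defn:edgetails} while invoking these geometric lemmas uniformly is where the real work lies; everything else reduces to transporting standard invariant-manifold statements through the compactification diffeomorphism.
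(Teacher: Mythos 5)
Your proposal follows essentially the same route as the paper for part (a) and the first two bullets of part (b): existence and uniqueness of the pullback attractor is taken from \cite[Thm.~2.2]{Ashwin2016}, the correspondence $x\mapsto(x,g_\alpha(\tau))$ transfers it to a branch of $W^{u,[r]}_\alpha(\tilde e^-)$ via Proposition~\ref{prop:csdyn}(c), and $\Theta^{[r]}(\tau)$ is constructed by slicing the local stable manifold $W^{s,[r]}_{\alpha,\mathrm{loc}}(\tilde\eta^+)$ (guaranteed to be a graph over $s$ near $S^+$ by Proposition~\ref{prop:csdyn}(b)) at fixed $s$-levels. Nothing to change there.

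Where you diverge is the third bullet of (b), and it is worth comparing the two treatments. The paper is terse: it ``recalls'' from the remark following Definition~\ref{defn:edgetails} that an edge tail contains $\eta^+$ together with a trajectory of the future limit system limiting to $\eta^+$ in backward time, observes that $W^u(\tilde\eta^+)\subset S^+$ is one--dimensional and hence consists of exactly two trajectories (the two branches), and concludes each edge tail must contain one of them. You identify this as the delicate point and propose to justify it via the $\lambda$-lemma at $\tilde\eta^+$, combined with continuity in $r$ to argue that trajectories for $r$ on a fixed side of $r_c$ pass near $\tilde\eta^+$ and then shadow one definite branch of $W^u(\tilde\eta^+)$. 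This is a more explicit justification of exactly the same fact: the $\lambda$-lemma is the natural mechanism by which the near-critical trajectories acquire a full unstable branch in their one-sided limit set, which is what makes the paper's ``each edge tail contains a backward-limiting trajectory'' assertion true (rather than the edge tail being merely $\{\eta^+\}$ or some proper sub-arc of a branch). So your ``main obstacle'' paragraph is not a gap in your own argument but a correct identification of the place where the paper leans on a geometric fact without spelling out the inclination-lemma mechanism behind it. Your version is therefore a slightly longer but more self-contained route to the same conclusion. One small point to be careful about if you write this out: the set-limit in Definition~\ref{defn:edgetails} is a nested intersection over both $T$ and $\delta$, so the $\lambda$-lemma argument must be applied uniformly — for every neighbourhood of a point on the chosen branch of $W^u(\tilde\eta^+)$ you need to produce, for all sufficiently small $\delta$ and all sufficiently large $T$, some $(r,\tau)$ with $r$ on the correct side of $r_c$ and $\tau>T$ whose image lands in that neighbourhood; the hyperbolicity of $\tilde\eta^+$ and the transverse crossing of $W^{s,[r]}_\alpha(\tilde\eta^+)$ at $r=r_c$ give exactly this.
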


\begin{rmk}
These relations between nonautonomous and compactified system dynamics are the main advantages of the compactification. They show that the temporal shape of the external input $\Lambda(\T)$ and the magnitude of the rate parameter $r>0$ are in a certain sense `encoded' in the geometric shape of the invariant manifolds $W_\alpha^{u,[r]}(\tilde{e}^-)$ and $W_\alpha^{s,[r]}(\tilde{\eta}^{+})$ for the autonomous compactified system~\eqref{eq:odeextbtau}. This observation allows us to use existing numerical methods from~\cite{Krauskopf2006} to compute families of regular R-tipping thresholds in low-dimensional nonautonomous systems~\eqref{eq:odewithrs} as local stable manifolds of saddles $\tilde{\eta}^+$ in the extended phase space of the compactified system~\eqref{eq:odeextbtau}.
\end{rmk}
\noindent
{\em Proof of Proposition~\ref{prop:invsete-}.}
The assumption of $\alpha$ means that the conclusion of Proposition~\ref{prop:regular} holds.\\
(a) In the nonautonomous system~\eqref{eq:odewithrs}, existence of a unique local pullback 
point attractor $x^{[r]}(\T,e^-)$ that limits to 
$e^-$ as $\T\to +\infty$ for any $r>0$ follows from~\cite[Thm.~2.2]{Ashwin2016}.
In the compactified system~\eqref{eq:odeextbtau}, existence of a unique  one-dimensional unstable manifold $W_\alpha^{u,[r]}(\tilde{e}^-)$
for any $r>0$ follows from Proposition~\ref{prop:csdyn}(c). 
These may exist for all $\T\in\mathbb{R}$ and $s\in(-1,1)$, 
respectively, but this is not guaranteed.
Noting that $\left\{ (x^{[r]}(\T),g_{\alpha}(\T))\;:\;\T<\tau_0 \right\}$ is
the trajectory of the compactified system that corresponds to a solution $x^{[r]}(\T)$ 
of the nonautonomous system gives the result.

\noindent
(b) 
 We prove existence of  a regular R-tipping threshold anchored 
 at infinity by an equilibrium regular R-tipping edge state $\eta^+$ by construction, 
using sections of a suitably chosen subset of $W^{s,[r]}_\alpha(\tilde{\eta}^{+})$ at fixed values of $s$.
Existence of a codimension-one embedded orientable forward-invariant local stable manifold $W_{\alpha,loc}^{s,[r]}(\tilde{\eta}^+) \subseteq W_{\alpha}^{s,[r]}(\tilde{\eta}^+)$ that 
is a graph over $s$ for  $s\in(s_0,1]$ follows from Proposition~\ref{prop:csdyn}(b).  
Keeping in mind that $s = g_\alpha(\T)$, and setting $\tau_0 = h_\alpha(s_0)$, we construct
\begin{equation}
\label{eq:thetaWs}
\Theta^{[r]}(\T) :=\{x\;:\;(x,s)\in W_{\alpha,loc}^{s,[r]}(\tilde{\eta}^+)\} \subset \mathbb{R}^n,
\end{equation}
for any $r>0$ and all  $\T \in (\tau_0,+\infty)$. Note that $\tau_0$ is $-\infty$ if $s_0=-1$.
Such $\Theta^{[r]}(\T)$ is a codimension-one embedded orientable forward-invariant nonautonomous 
set by construction, and has the property~\eqref{eq:etastable}.
Thus, $\Theta^{[r]}(\T)$ is a regular R-tipping threshold. Note that $\Theta^{[r]}(\T)$ 
is not unique in the sense that there is a different $\Theta^{[r]}(\T)$ for every 
different codimension-one forward-invariant subset of $W_{\alpha,loc}^{s,[r]}(\tilde{\eta}^+)$.  Relation~\eqref{eq:compthr} follows from construction of $\Theta^{[r]}(\T)$ in~\eqref{eq:thetaWs}.
To prove the last bullet point in (b), recall from Definition~\ref{defn:edgetails} that each edge tail of $\eta^+$ contains a trajectory
of the future limit system~\eqref{eq:odea+} that limits to $\eta^+$ in backwards time and does not depend on $r$ or $\alpha$. It follows from Proposition~\ref{prop:csdyn}(b) 
that $\tilde{\eta}^+$ is a hyperbolic saddle with one-dimensional unstable manifold $W^{u}(\tilde{\eta}^+)\subset S^+$. This means that this unstable manifold contains precisely two trajectories (the branches of $W^{u}(\tilde{\eta}^+)$) and hence each edge tail must contain one of these.
\qed\\

Next, we state three relations between solutions $x^{[r]}(\T)$ of the nonautonomous system~\eqref{eq:odewithrs} for $r$ 
on different sides of a critical rate $r_c$, and the upper $x^{[r_c^+]}$ and lower $x^{[r_c^-]}$  edge tails of an equilibrium regular R-tipping edge state $\eta^+$. 
\begin{proposition}
\label{prop:edgetails}
Consider a solution $x^{[r]}(\T)$ to a nonautonomous system~\eqref{eq:odewithrs}
with an external input $\Lambda(\T)$ that is asymptotically constant to $\lambda^+$.
Suppose there is a regular R-tipping threshold $\Theta^{[r]}(\T)$ anchored at infinity by an equilibrium regular  R-tipping edge state $\eta^+$, and there is R-tipping for some critical rate $r=r_c>0$ so that
$x^{[r_c]}(\T) \to \eta^+\;\;\mbox{as}\;\; \T\to +\infty$. 
\begin{itemize}
    \item [(a)]
    If there is a $\delta>0$ such that $x^{[r]}(\T)$ lies on different sides of  $\Theta^{[r_c]}(\T)$ for $r\in(r_c-\delta,r_c)$ and $r\in(r_c, r_c+\delta)$,
    then the upper and lower edge tails of $\eta^+$ are different: $x^{[r_c^+]}\neq x^{[r_c^-]}$.
    \item[(b)]
    If each edge tail of $\eta^+$ is a connection from $\eta^+$ to an attractor, then there is a $\delta>0$ such that  $x^{[r]}(\T)$ converges to an attractor for $0<|r-r_c|<\delta$.
    \item[(c)]
    If each edge tail of $\eta^+$ is a different connection from $\eta^+$ to a (possibly different) attractor, then there is a $\delta>0$ such that $x^{[r]}(\T)$ lies on different sides of $\Theta^{[r_c]}(\T)$ and converges to the corresponding attractor for $r\in(r_c-\delta,r_c)$ and $r\in(r_c,r_c+\delta)$. 
\end{itemize}
\end{proposition}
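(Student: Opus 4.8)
The plan is to carry out all three parts inside the autonomous compactified system~\eqref{eq:odeextbtau}, where the relevant objects become invariant manifolds of the hyperbolic saddle $\tilde{\eta}^+=(\eta^+,1)$ and the classical inclination lemma (the $\lambda$-lemma) is available. Fix a compactification parameter $\alpha\in(0,\rho)$, so that Propositions~\ref{prop:regular}, \ref{prop:csdyn}(b) and~\ref{prop:invsete-} all apply. By Proposition~\ref{prop:csdyn}(b), $\tilde{\eta}^+$ is a hyperbolic saddle with a codimension-one local stable manifold $W^{s,[r]}_{\alpha,loc}(\tilde{\eta}^+)$ and a one-dimensional unstable manifold $W^u(\tilde{\eta}^+)\subset S^+$ consisting of $\tilde{\eta}^+$ together with two branches, the latter being independent of $r$. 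Write $\gamma_r:=\{(x^{[r]}(\T),g_\alpha(\T)):\T>\tau_0\}$ for the embedded trajectory of $x^{[r]}(\T)$. By Proposition~\ref{prop:invsete-}(b): $\gamma_{r_c}$ converges to $\tilde{\eta}^+$ inside $W^{s,[r_c]}_\alpha(\tilde{\eta}^+)$; for $r$ near $r_c$ the side of $W^{s,[r]}_{\alpha,loc}(\tilde{\eta}^+)$ on which $\gamma_r$ lies records the side of the threshold $\Theta^{[r_c]}(\T)$ on which $x^{[r]}(\T)$ lies (continuity in $r$ of the trajectory on compact $\T$-intervals and of the hyperbolic stable manifold); and each edge tail, embedded as in~\eqref{eq:etsembedded}, contains one of the two branches of $W^u(\tilde{\eta}^+)$. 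Finally, if $a^+$ is an attractor of the future limit system~\eqref{eq:odea+} then $\tilde{a}^+=(a^+,1)$ is an attractor of~\eqref{eq:odeextbtau}, since $S^+$ is normally attracting with rate $\alpha$ (Proposition~\ref{prop:csdyn}) and $a^+$ attracts within $S^+$; hence its basin is open and contains any orbit branch that limits to it.

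\textbf{Part (a).} Fix a small box $U\ni\tilde{\eta}^+$ in which the inclination lemma applies. Since $\gamma_{r_c}$ approaches $\tilde{\eta}^+$ along the stable manifold, choose $\T_1$ with $\gamma_{r_c}(\T_1)$ deep inside $U$ and close to $W^{s,[r_c]}_{\alpha,loc}(\tilde{\eta}^+)$; by continuity there is $\delta>0$ so that for all $r$ with $0<|r-r_c|<\delta$ the point $\gamma_r(\T_1)$ lies in $U$, close to and on the same side of $W^{s,[r]}_{\alpha,loc}(\tilde{\eta}^+)$ as it lies of $\Theta^{[r_c]}(\T)$. Because $r_c$ is a critical rate, $\gamma_r\not\to\tilde{\eta}^+$ for $r\neq r_c$, so $\gamma_r$ is not contained in $W^s(\tilde{\eta}^+)$, hence disjoint from it, and must leave $U$. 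The inclination lemma then confines $\gamma_r\cap U$ to an arbitrarily small $C^0$-neighbourhood of $W^{s}_{\alpha,loc}(\tilde{\eta}^+)\cup W^u(\tilde{\eta}^+)$, with the outgoing portion shadowing precisely the branch of $W^u(\tilde{\eta}^+)$ lying on the same side as $\gamma_r(\T_1)$. The hypothesis --- opposite sides of $\Theta^{[r_c]}(\T)$ for $r<r_c$ and for $r>r_c$ --- therefore sends the two one-sided families to the two \emph{distinct} branches. Consequently $x^{[r_c^+]}$ contains interior points of one branch, hence points lying outside every $C^0$-neighbourhood of the other branch and of the stable manifold; by the confinement just described such points cannot lie in $x^{[r_c^-]}$. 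Thus $x^{[r_c^+]}\neq x^{[r_c^-]}$.

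\textbf{Parts (b) and (c).} By hypothesis (b) each of the two branches of $W^u(\tilde{\eta}^+)$ --- being contained in an edge tail that is a connection to an attractor --- limits to an attractor $\tilde{a}_\pm=(a_\pm,1)$ of~\eqref{eq:odeextbtau}. Pick $q_\pm$ on the corresponding local branch, away from $\tilde{\eta}^+$, so $q_\pm$ lies in the open basin of $\tilde{a}_\pm$. Running the inclination-lemma argument of part (a) on each one-sided neighbourhood of $r_c$: for $0<|r-r_c|<\delta$ the trajectory $\gamma_r$ enters any prescribed $C^0$-neighbourhood of $q_\pm$, hence enters the open basin of $\tilde{a}_\pm$, so $\gamma_r\to\tilde{a}_\pm$ and therefore $x^{[r]}(\T)\to a_\pm$; a single $\delta$ works by the uniformity in $r$ on compact $\T$-intervals. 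This proves (b). For (c), ``different connections'' forces the two edge tails to contain the two \emph{distinct} branches of $W^u(\tilde{\eta}^+)$ (the same branch would give the same $\omega$-limit, hence the same connection), so the side-to-branch correspondence established in part (a) shows that $\gamma_r$ with $r<r_c$ and with $r>r_c$ approach $\tilde{\eta}^+$ on opposite sides, i.e.\ $x^{[r]}(\T)$ lies on different sides of $\Theta^{[r_c]}(\T)$; and by the computation in (b) it converges for $r>r_c$ to the attractor of the upper edge tail and for $r<r_c$ to that of the lower edge tail, i.e.\ to the corresponding attractor. Shrinking $\delta$ to accommodate all conclusions finishes the proof.

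\textbf{Main obstacle.} The crux, common to all three parts, is the simultaneous limit $r\to r_c$ and $\T\to+\infty$: one must run the inclination lemma near $\tilde{\eta}^+$ with control uniform in $r$, and --- in part (a) and its converse in (c) --- pin down the edge-tail accumulation sets branch by branch. The delicate point is that $\gamma_r$ might re-enter a neighbourhood of $\tilde{\eta}^+$ along the other side of the stable manifold; this is ruled out generically (and, in the setting of (b)--(c), by the relevant target being an attractor), but in full generality it requires combining invariance of $W^s(\tilde{\eta}^+)$ with the strict monotonicity of $s$ along trajectories of~\eqref{eq:odeextbtau} to keep the two edge tails separated.
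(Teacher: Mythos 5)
Your proof follows the same overall strategy as the paper's: pass to the compactified system \eqref{eq:odeextbtau}, identify the threshold with $W_\alpha^{s,[r]}(\tilde\eta^+)$ and the edge tails with the two branches of $W^u(\tilde\eta^+)$ via Proposition~\ref{prop:invsete-}(b), then read off (a)--(c) from the side-of-stable-manifold to branch-of-unstable-manifold correspondence, the persistence of attractors under compactification, and the openness of basins. Where you differ is in making the mechanism behind that correspondence explicit: the paper merely asserts that the embedded trajectory ``intersects the two branches of $W^u(\tilde\eta^+)$'' (which, taken literally, cannot happen, since the branches lie in $S^+$ while $\gamma_r$ has $s<1$ for all finite $\T$), whereas you correctly recast this as a shadowing statement and invoke the inclination lemma near the hyperbolic saddle $\tilde\eta^+$ to pin down which branch each one-sided family approaches. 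You also surface two subtleties the paper leaves silent — the simultaneous limit $r\to r_c$, $\T\to+\infty$, and the possibility of re-entry — and you correctly observe that monotonicity of $s$ together with flow-invariance of $W^{s,[r]}_\alpha(\tilde\eta^+)$ (a trajectory with $r\ne r_c$ can never cross its own stable manifold) rule out a side change. One small point worth tightening: the hypothesis in (a) compares sides of the fixed-$r_c$ threshold $\Theta^{[r_c]}(\T)$, whereas the branch a given $\gamma_r$ shadows is governed by its side of the $r$-dependent manifold $W^{s,[r]}_\alpha(\tilde\eta^+)$; you state that these sides agree ``by continuity'', which is plausible but deserves a sentence of justification (e.g.\ that $\gamma_{r_c}$ lies on $W^{s,[r_c]}_\alpha$, the two manifolds are $C^1$-close and tangent at $\tilde\eta^+$, and the transversal displacement of $\gamma_r$ from either is of the same leading order in $r-r_c$). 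This gap is present in the paper's proof as well; your version is the more careful of the two, and the inclination-lemma framing is the right tool for closing it.
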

\begin{rmk}
\label{rmk:edgetails}
Note that different edge tails of $\eta^+$ do not imply that each edge tail is a different connection from $\eta^+$ to an attractor. For example, different composite edge tails, each of which consists of different connected trajectories or components, may have a common first component that connects $\eta^+$ to a saddle,  and a different second component that continues away from this saddle. Another example are different non-composite edge tails that diverge from $\eta^+$ to infinity.
\end{rmk}

\noindent
{\em Proof of Proposition~\ref{prop:edgetails}.}
Choose the compactification parameter $\alpha$  that satisfies Proposition~\ref{prop:regular}. Recall from Section~\ref{sec:compautsyst} that $s(\T)=g_\alpha(\T)$, and use
$$
\tilde{x}_\alpha^{[r]}(\T) = 
\left( 
{x}^{[r]}(\T), s(\T)
\right),
$$
to denote the solution of~\eqref{eq:odeextbtau} corresponding to a solution
$x^{[r]}(\T)$ of the nonautonmous system~\eqref{eq:odewithrs}
with a fixed $r$, and refer to $\tilde{x}^{[r_c^+]}$ and  $\tilde{x}^{[r_c^-]}$ from~\eqref{eq:etsembedded} as {\em embeded edge tails}.
Recall from Proposition~\ref{prop:invsete-}(b) that $W_\alpha^{s,[r_c]}(\tilde{\eta}^{+})$ 
contains a family of regular R-tipping thresholds $\Theta^{[r]}(\T)$, and each embedded edge tail contains one  branch of the unstable manifold $W^{u}(\tilde{\eta}^+)$.

For (a), assume that $x^{[r]}(\T)$ is on different sides of $\Theta^{[r]}(\T)$ for $r\in(r_c-\delta,r_c)$ and $r\in(r_c+\delta,r_c)$ in the nonautonomous system~\eqref{eq:odewithrs}, and consider where the corresponding
$\tilde{x}_\alpha^{[r]}(\T)$ intersects the two branches of $W^u(\tilde{\eta}^+)$ in the extended phase space of the compactified system~\eqref{eq:odeextbtau}. This intersection changes sides of $W_\alpha^{s,[r_c]}(\tilde{\eta}^{+})$ as $r$ passes through $r_c$.
Thus, each embedded edge tail contains a different branch of $W^u(\tilde{\eta}^+)$, meaning that the edge tails
$x^{[r_c^+]}$ and $x^{[r_c^-]}$ are different.

For (b), it follows from~\cite[Prop.3.1]{Wieczorek2019compact} that 
if $a^+$ is an attractor for the future limit system~\eqref{eq:odea+}, 
then $\tilde{a}^+ =\{(x,1):x\in a^+\}\subset S^+$ is an attractor for 
the compactified system~\eqref{eq:odeextbtau}.
Thus, the assumption that each edge tail is a connection from $\eta^+$ 
to an attractor implies that each embedded edge tail lies in the basin 
of attraction of an attractor. This, in turn, implies  that each  section that transversely intersects an embedded edge tail has an open neighbourhood  that 
lies in the basin of attraction of an attractor. We choose $\delta>0$ small 
enough so that $\tilde{x}_\alpha^{[r]}(\T)$ enters this neighbourhood
for all $0<|r-r_c|<\delta$. This implies that the corresponding $x^{[r]}(\T)$ 
converges to an attractor for all $0<|r-r_c|<\delta$.

In (c), the assumption that each edge tail of $\eta^+$ is a different connection from $\eta^+$ to a possibly different attractor implies that each embedded edge tail contains a different branch of  $W^u(\tilde{\eta}^+)$ and thus lies on a different sides of $W_\alpha^{s,[r_c]}(\tilde{\eta}^{+})$. This, in turn, implies that
${x}^{[r]}(\T)$ lies on different sides of $\Theta^{[r_c]}(\T)$ for $r\in(r_c-\delta,r_c)$ and $r\in(r_c,r_c+\delta)$.
It follows from (b) that $x^{[r]}(\T)$ converges to the corresponding attractor for  $0<|r-r_c|<\delta$. 
\qed

\section{Criteria for Tracking and R-tipping  with Regular Thresholds}
\label{sec:gentestcrit}

In this section, we give the main results on R-tipping via loss of end-point tracking in nonautonomous system~\eqref{eq:odewithr} or~\eqref{eq:odewithrs} with  asymptotically
constant inputs $\Lambda$. Our focus is on non-degenerate (reversible and irreversible) cases of R-tipping, due to crossing regular R-tipping thresholds anchored at infinity by an equlibrium regular R-tipping edge state. Specifically, we use the compactification technique together with  relations between nonautonomous~\eqref{eq:odewithrs} and compactified~\eqref{eq:odeextbtau} system dynamics given in Section~\ref{sec:compact} to:

\begin{itemize}
    \item
    Give rigorous testable criteria for tracking of moving sinks, and tracking of moving regular thresholds in arbitrary dimension in Section~\ref{sec:TrackingProof}.
    \item
     Use the concept of threshold instability to generalise  sufficient conditions from~\cite{Ashwin2016} for the occurrence of
     irreversible  R-tipping for moving sinks on $I=\R$ in one dimension  to 
     different cases of R-tipping for moving sinks on $I=\R$ in arbitrary dimension  in Section~\ref{sec:Rtippingcriteria}.
    \item
     Relax the assumption of moving sinks on $I=\R$ and associate different R-tipping in~\eqref{eq:odewithrs} with a connecting (heteroclinic) orbit in~\eqref{eq:odeextbtau}.
    Give necessary and sufficient conditions for the occurrence of non-degenerate R-tipping in~\eqref{eq:odewithrs} in terms of non-degeneracy criteria for connecting (heteroclinic) orbits in~\eqref{eq:odeextbtau}.
    Use this result to give general methods for computing  critical rates for R-tipping in arbitrary dimension in Section~\ref{sec:computing}.
\end{itemize}

\subsection{Criteria for Tracking Moving Sinks and Moving Regular Thresholds}
\label{sec:TrackingProof}

We now demonstrate that a moving sink will be tracked by a 
solution of the nonautonomous system if the rate parameter 
$r$ is small enough. We also demonstrate that a (normally repelling) moving regular 
threshold will be tracked by an R-tipping threshold if $r$ is 
small enough. 
To prove these results, we consider the compactified system~\eqref{eq:odeextbtau} as a singularly perturbed fast-slow system.
 This allows us to use results from geometric singular perturbation 
theory on the compactified system~\eqref{eq:odeextbtau} with small parameter 
$0< r \ll 1$ from Section~\ref{sec:NHIM}, together with relations between nonautonomous~\eqref{eq:odewithrs} and compactified~\eqref{eq:odeextbtau} 
system dynamics from Section~\ref{sec:compactdyns}.

The first result states a sufficient condition that moving sinks are tracked. It reformulates~\cite[Lemma 2.3]{Ashwin2016}
for more general external inputs 
$\Lambda(\tau)$ that are arbitrary dimensional and not necessarily bounded between $\lambda^-$ and $\lambda^+$, and for solutions $x^{[r]}(\T,x_0,\T_0)$ that are not necessarily pullback attractors. The stronger assumption of exponentially asymptotically constant $\Lambda(\tau)$ is made here for simplicity, and the results can easily be extended to any asymptotically constant $\Lambda(\tau)$ by using~\cite[Definition 2.2]{Wieczorek2019compact}; see also Section~\ref{sec:Conclusions}.
\begin{theorem}
\label{thm:tracking}
 Consider a nonautonomous system~(\ref{eq:odewithrs}) with an  input $\Lambda(\T)$ that is exponentially asymptotically constant
 to $\lambda^+$.  
 Suppose there is a moving sink $e(\Lambda(\T))$ on $I=(\T_0,+\infty)$, and recall that $e(\Lambda(\T)) \to e^+$ as $\T \to +\infty$. 
 Fix any $\delta>0$.
\begin{itemize}
\item[(a)] 
For any solution $x^{[r]}(\T,x_0,\T_0)$ with $x_0$ in the basin of attraction of $e(\Lambda(\T_0))$, there is an $r^*(\delta)>0$ and a $\T^*(r,\delta)\ge \T_0$, such that $x^{[r]}(\T,x_0,\T_0)$ 
 {\em $\delta$-close and end-point tracks} the moving sink $e(\Lambda(\T))$ on $(\T^*,+\infty)\subseteq I$ for any $r\in(0,r^*)$.
\item[(b)]   
Suppose in addition that $\Lambda(\T)$ is exponentially 
bi-asymptotically constant, $e(\Lambda(\T))$ is a moving sink on $I=\R$, and recall that  $e(\Lambda(\T))\to e^-$ as $\T \to -\infty$. Then, there is an $r^*(\delta)>0$ such that
\begin{itemize}
\item[$\bullet$]
 The unique local pullback attractor $x^{[r]}(\T,e^-)$ from Proposition~\ref{prop:invsete-}(a)  exists for   any $r\in(0,r^*)$ and all $\T\in\mathbb{R}$.
 \item[$\bullet$]
 The local pullback attractor $x^{[r]}(\T,e^-)$ 
{\em $\delta$-close and end-point tracks} the moving sink $e(\Lambda(\T))$ on $I=\R$ for any $r\in(0,r^*)$. 
\end{itemize}
\end{itemize}
\end{theorem}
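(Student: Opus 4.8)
The plan is to lift the nonautonomous system~\eqref{eq:odewithrs} to the autonomous compactified system~\eqref{eq:odeextbtau} and treat the latter as the singularly perturbed fast--slow system set up in Sections~\ref{sec:csfastslow}--\ref{sec:compactdyns}, with small parameter $r$. First I would fix a compactification parameter $\alpha$ satisfying Proposition~\ref{prop:regular} with in addition $\alpha<\rho$, so that Proposition~\ref{prop:csdyn} also applies. By Proposition~\ref{prop:fastslow}(a) the moving sink $e(\Lambda(\T))$ on $I$ is then identified with a compact connected normally hyperbolic \emph{attracting} critical manifold $\tilde{E}_\alpha^{[0]}$ of the layer problem~\eqref{eq:layersyst}, sitting as a graph over $s$ in $\R^n\times[s_-,s_+]$. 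Fenichel's theorem~\cite[Thm~9.1]{Fenichel1979} then supplies, for all $r\in(0,r^*)$ with $r^*$ small enough, a locally invariant $C^1$ slow manifold $\tilde{E}_\alpha^{[r]}$ that is $O(r)$-close to $\tilde{E}_\alpha^{[0]}$, together with a stable fibration of a fixed tubular neighbourhood $U$ along which nearby trajectories are attracted exponentially fast to $\tilde{E}_\alpha^{[r]}$; after shrinking $r^*=r^*(\delta)$ we may also assume this $O(r)$ distance is below $\delta/2$.

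For part~(a): the layer problem with $s$ frozen at $s_0=g_\alpha(\T_0)$ is the frozen system~\eqref{eq:odea} at $\lambda=\Lambda(\T_0)$, and $x_0$ lies in the basin of attraction of its hyperbolic sink $e(\Lambda(\T_0))$, so the frozen trajectory from $x_0$ enters $U\cap\{s=s_0\}$ after a finite time. By continuous dependence on $r$ over that finite time interval (a regular perturbation, since $s$ moves only $O(r)$), the trajectory of~\eqref{eq:odeextbtau} from $(x_0,s_0)$ also enters $U$, at some $s^*=g_\alpha(\T^*)$. From then on the Fenichel slaving keeps it within $O(r)<\delta/2$ of $\tilde{E}_\alpha^{[r]}$, hence within $\delta$ of the critical manifold, i.e. of the moving sink, for all $\T>\T^*$; this is the claimed $\delta$-close tracking on $(\T^*,+\infty)$. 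As $\T\to+\infty$ we have $s\to1$, and near $s=1$ the slow manifold lies in the basin of attraction of the hyperbolic sink $\tilde{e}^+=(e^+,1)$ of~\eqref{eq:odeextbtau} (Proposition~\ref{prop:csdyn}(a)); hence the trajectory converges to $\tilde{e}^+$, so $x^{[r]}(\T)\to e^+$, which is end-point tracking. Translating back to~\eqref{eq:odewithrs} through the trajectory correspondence of Section~\ref{sec:compactdyns} finishes~(a).

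For part~(b): now $\tilde{E}_\alpha^{[0]}$ is a compact normally attracting critical manifold running from $\tilde{e}^-=(e^-,-1)$, which by Proposition~\ref{prop:csdyn}(c) is a hyperbolic saddle whose unstable eigenvector is normal to $S^-$ (here $\alpha<\rho$ is used), to the hyperbolic sink $\tilde{e}^+=(e^+,1)$. I would argue that the Fenichel slow manifold $\tilde{E}_\alpha^{[r]}$ coincides near $\tilde{e}^-$ with the one-dimensional unstable manifold $W_\alpha^{u,[r]}(\tilde{e}^-)$ of Proposition~\ref{prop:csdyn}(c) -- both are invariant graphs over $s$ through $\tilde{e}^-$ tangent to the $s$-direction, and the $O(1/r)$ normal contraction slaves $W_\alpha^{u,[r]}(\tilde{e}^-)$ onto $\tilde{E}_\alpha^{[r]}$ -- and that this branch then flows, staying $O(r)$-close to $\tilde{E}_\alpha^{[0]}$, into the sink $\tilde{e}^+$. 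By Proposition~\ref{prop:invsete-}(a) this branch is precisely the embedded local pullback attractor $x^{[r]}(\T,e^-)$; since it reaches $s=1$, the pullback attractor exists for all $\T\in\R$, stays within $\delta$ of the moving sink for all $\T\in\R$ once $r<r^*$, and converges to $e^+$, giving both bullet points of~(b).

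I expect the main obstacle to be the behaviour at the two ``corners'' $s=\pm1$, where $\tilde{E}_\alpha^{[0]}$ meets the invariant subspaces $S^\pm$: Fenichel's theorem as usually stated only produces a \emph{locally} invariant slow manifold (trajectories may escape through the boundary of the base), so one must argue separately that near $s=-1$ this slow manifold is the genuinely invariant unstable manifold $W_\alpha^{u,[r]}(\tilde{e}^-)$, and that near $s=1$ it is trapped in the basin of $\tilde{e}^+$ -- i.e. one must splice Fenichel's normally hyperbolic invariant manifold theory with ordinary (un)stable manifold theory at the boundary equilibria, which is where normality of the additional eigenvector from Proposition~\ref{prop:csdyn} is essential. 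A secondary technical point in~(a) is uniformity: the finite entry time is obtained pointwise in $x_0$ from compactness of a finite arc of the frozen orbit together with openness of $U$, and basin membership survives the $O(r)$ drift of $s$ because basins of hyperbolic sinks vary continuously with the parameter.
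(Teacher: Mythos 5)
Your proposal is correct and follows essentially the same route as the paper: compactify, apply Fenichel's theorem to the compact normally hyperbolic critical manifold $\tilde{E}_\alpha^{[0]}$ obtained from Proposition~\ref{prop:fastslow}(a) to get a nearby attracting invariant slow manifold $\tilde{E}_\alpha^{[r]}$, and (for part (b)) identify its branch emanating from $\tilde{e}^-$ with the local pullback attractor via Proposition~\ref{prop:invsete-}(a). Your explicit treatment of the finite entry time in (a) and of the ``corners'' $s=\pm 1$ --- where Fenichel's only locally invariant slow manifold must be matched with the genuine (un)stable manifolds of $\tilde{e}^\mp$, using the normality of the extra eigenvector from Proposition~\ref{prop:csdyn} --- is in fact somewhat more careful than the paper's terse version, which simply asserts that $\tilde{E}_\alpha^{[r]}$ contains $\tilde{e}^\pm$ because these are isolated equilibria.
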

\begin{rmk}
The compactification from Section~\ref{sec:compact} allows us to prove Theorem~\ref{thm:tracking} using Fenichel's theorem~\cite[Thm 9.1]{Fenichel1979} on persistence of compact normally hyperbolic invariant  manifolds.
Alternative approaches that may give results similar to Theorem~\ref{thm:tracking}(b) include:
\cite[Theorem III.1]{Alkhayuon2018} which uses results from~\cite{Aulbach2006}, 
\cite[Lemma 2.3]{Ashwin2016} which uses results from~\cite{Eldering} on persistence of non-compact normally hyperbolic invariant manifolds in bounded geometry, 
\cite{Kuehn2021} which uses a Melnikov integral approach, and
\cite{Longo2021} which uses the hull construction,   
although the last two examples are for one-dimensional (scalar) systems.
\end{rmk}

\noindent
{\em Proof of Theorem\ref{thm:tracking}.}
Choose the compactification parameter $\alpha$  that satisfies Proposition~\ref{prop:regular} for any $r>0$.\\
\noindent
(a)
Recall from Proposition~\ref{prop:fastslow}(a) that the moving sink $e(\Lambda(\T))$ on $I=(\T_0,+\infty)$ corresponds to a  one-dimensional compact connected attracting normally hyperbolic critical manifold
$$
\tilde{E}_\alpha^{[0]}  =
\left\{
\left(e(\Lambda_\alpha(s)),s\right)
\;:\; s\in[s_0,1]
\right\},
$$
in the extended phase space of the  compactified
system~\eqref{eq:odeextbtau},
where $s_0=g_\alpha(\tau_0)$. 
It then follows from~\cite{Fenichel1979} that,
for $r>0$ sufficiently small, $\tilde{E}_\alpha^{[0]}$ perturbs to a one-dimensional connected attracting normally hyperbolic  invariant manifold $\tilde{E}_\alpha^{[r]}$ that lies $C^1$-close to $\tilde{E}_\alpha^{[0]}$ and, as $\tilde{e}^+$ is isolated, contains $\tilde{e}^+$.
Thus, for any $\delta>0$
and initial condition $(x_0,s_0)$ in the basin of attraction of $e(\Lambda_\alpha(s_0))$,
we can choose  $r^*$ small enough so that:
(i) $\tilde{E}_\alpha^{[r]}$ is normally hyperbolic ($v_+$ from Proposition~\ref{prop:csdyn}(a) is the leading eigenvector), attracting, and lies $\delta$-close to $\tilde{E}_\alpha^{[0]}$ for  any $r\in(0,r^*)$
and all $s\in[s_0,1]$, and
(ii) $(x_0,s_0)$ is in the basin of attraction of $\tilde{E}_\alpha^{[r]}$ for any $r\in(0,r^*)$.
Thus, $x^{[r]}(\T,x_0,\T_0)$ will be attracted to the solution
of~\eqref{eq:odewithrs} corresponding to $\tilde{E}_\alpha^{[r]}$, 
and $\delta$-close and end-point track $e(\Lambda(\T))$ on $(\T^*,+\infty)$ for any $r\in(0,r^*)$ and sufficiently large $\T^*\ge\T_0$.\\
(b)  
In this case, we have
$$
\tilde{E}_\alpha^{[0]}  =
\left\{
\left(e(\Lambda_\alpha(s)),s\right)
\;:\; s\in[-1,1]
\right\},
$$
so that $\tilde{E}_\alpha^{[r]}$ is connected, attracting and normally hyperbolic, contains $\tilde{e}^-$ and $\tilde{e}^+$, and lies $\delta$-close to $\tilde{E}_\alpha^{[0]}$ for any $r\in(0,r^*)$ and all $s\in[-1,1]$.
Since $\tilde{e}^-$ is a hyperbolic equilibrium with one unstable direction, $\tilde{E}_\alpha^{[r]}$ contains the branch of the unique one-dimensional unstable manifold of $\tilde{e}^-$ in the compactified system~\eqref{eq:odeextbtau}. Hence, by Proposition~\ref{prop:invsete-}(a), $\tilde{E}_\alpha^{[r]}$ corresponds 
to a unique local pullback attractor 
$x^{[r]}(\T,e^-)$ that limits to $e^-$ as $\T\rightarrow -\infty$
in the nonautonomous system~\eqref{eq:odewithrs}. 
It then follows from the properties of $\tilde{E}_\alpha^{[r]}$ that  $x^{[r]}(\T,e^-)$ exists
for all $\T\in\mathbb{R}$, and $\delta$-close and end-point tracks $e(\Lambda(\T))$ on $\mathbb{R}$ for any $r\in(0,r^*)$.
\qed

The next result is an analogous to Theorem~\ref{thm:tracking}, but for moving thresholds.
\begin{theorem}
\label{thm:trackingthresholds}
Consider a nonautonomous system~(\ref{eq:odewithrs}) with an  input $\Lambda(\T)$ that is exponentially asymptotically constant to $\lambda^+$. 
Suppose the future limit system~\eqref{eq:odea+} has an equilibrium regular  R-tipping edge state $\eta^+$.
 Then, 
\begin{itemize}
    \item[(a)] 
    There is a $\T_0$ (that may be $-\infty$), and
    \begin{itemize}
        \item [$\bullet$]
        A moving equilibrium regular edge state $\eta(\Lambda(\T))$ on $I=(\T_0,+\infty)$ that limits to $\eta^+$.
        \item [$\bullet$]
        A moving regular threshold $\theta(\Lambda(\T))$ on $I=(\T_0,+\infty)$ that contains $\eta(\Lambda(\T))$. 
    \end{itemize}
    \item[(b)] 
    Additionally, there is an R-tipping threshold $\Theta^{[r]}(\T)$ anchored at infinity by $\eta^+$. Furthermore, for any $\delta>0$ there is an $r^*(\delta)>0$ such that the R-tipping threshold $\Theta^{[r]}(\T)$ lies $\delta$-close\,\footnote{The notion of Hausdorff distance $d_H$ is discussed in Appendix~\ref{sec:A1}.} to the moving threshold $\theta(\Lambda(\T))$:
    \begin{equation}
    d_H\left(\Theta^{[r]}(\T),\theta(\Lambda(\T))\right)<\delta,
    \label{eq:threshtrack}
    \end{equation}
    for any $r\in(0,r^*)$ and all $\T > \T_0$.
\end{itemize}
\end{theorem}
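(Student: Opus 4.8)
The plan is to run the argument in parallel with the proof of Theorem~\ref{thm:tracking}, but replacing the attracting critical manifold carrying the moving sink by the saddle-type critical manifold carrying the moving edge state, and replacing Fenichel persistence of that manifold by Fenichel persistence of the manifold \emph{together with its local stable manifold}.

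For part~(a) I would argue directly from continuation in the frozen system. Since $\eta^+$ is an equilibrium regular R-tipping edge state of the future limit system~\eqref{eq:odea+}, it is a hyperbolic equilibrium with exactly one unstable direction of the frozen system~\eqref{eq:odea} at $\lambda=\lambda^+$, so Proposition~\ref{prop:edgecontinues} (with $\lambda^*=\lambda^+$) supplies an open neighbourhood $Q\subset\mathbb{R}^d$ of $\lambda^+$ together with $C^1$-smooth families $\eta(\lambda)$ and $\theta(\lambda)\supseteq\eta(\lambda)$ of regular edge states and regular thresholds on $Q$, with $\eta(\lambda^+)=\eta^+$. Because $\Lambda(\tau)\to\lambda^+$ as $\tau\to+\infty$, there is a $\tau_0$ (equal to $-\infty$ when $P_\Lambda\subset Q$, finite otherwise, and which I am free to enlarge later) such that $\Lambda(\tau)\in Q$ for all $\tau>\tau_0$; then $P_{\Lambda,I}\subseteq Q$ on $I=(\tau_0,+\infty)$, and Definition~\ref{defn:mth}(a),(b) yields the required moving regular threshold $\theta(\Lambda(\tau))$ containing the moving regular edge state $\eta(\Lambda(\tau))$ on $I$, with $\eta(\Lambda(\tau))\to\eta^+$.

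For part~(b) I would fix $\alpha\in(0,\rho)$ so that Propositions~\ref{prop:regular}, \ref{prop:csdyn} and~\ref{prop:fastslow} apply, and use Proposition~\ref{prop:fastslow}(b) to identify (after possibly enlarging $\tau_0$) the moving edge state with the compact connected normally hyperbolic critical manifold $\tilde H_\alpha^{[0]}=\{(\eta(\Lambda_\alpha(s)),s):s\in[s_0,1]\}$, $s_0=g_\alpha(\tau_0)$, of saddle type with one unstable normal direction (normally repelling if $n=1$), and to identify the moving threshold with the codimension-one set $\tilde\Theta_\alpha^{[0]}=\{(x,s):x\in\theta(\Lambda_\alpha(s)),\ s\in[s_0,1]\}$, which is precisely the local fibred stable manifold of $\tilde H_\alpha^{[0]}$ for the layer problem~\eqref{eq:layersyst}. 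Fixing $\delta>0$, Fenichel's theorem~\cite[Thm 9.1]{Fenichel1979} applied to~\eqref{eq:odeextbtau} with small parameter $r$ then yields, for $r\in(0,r^*(\delta))$, a persisting normally hyperbolic invariant manifold $\tilde H_\alpha^{[r]}$ and a persisting codimension-one local stable manifold $W^s_{loc}(\tilde H_\alpha^{[r]})$, lying $C^1$-$\delta$-close to $\tilde H_\alpha^{[0]}$ and $\tilde\Theta_\alpha^{[0]}$ respectively. Since the restriction of~\eqref{eq:odeextbtau} to $S^+$ is a fixed time rescaling of~\eqref{eq:odea+}, the equilibrium $\tilde\eta^+=(\eta^+,1)$ persists for every $r>0$; as $\tilde H_\alpha^{[r]}$ is a graph over $s$ its $s=1$ endpoint is a fixed point of the $S^+$-flow near $\eta^+$, hence equals $\tilde\eta^+$, and because $s'>0$ forces every point of $W^s_{loc}(\tilde H_\alpha^{[r]})$ with $s<1$ to flow to $s=1$, one gets $W^s_{loc}(\tilde H_\alpha^{[r]})\subseteq W^{s,[r]}_{\alpha,loc}(\tilde\eta^+)$, a graph over $s$. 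I would therefore take $\Theta^{[r]}(\tau):=\{x:(x,g_\alpha(\tau))\in W^s_{loc}(\tilde H_\alpha^{[r]})\}$: this is one of the choices produced by the construction in the proof of Proposition~\ref{prop:invsete-}(b), hence a regular R-tipping threshold anchored at infinity by $\eta^+$ satisfying~\eqref{eq:etastable}. Finally, since $\tilde\Theta_\alpha^{[0]}$ and $W^s_{loc}(\tilde H_\alpha^{[r]})$ are graphs over $s$ (and the common fibre variable) that are $C^1$-$\delta$-close over the compact range $s\in[s_0,1]$, their $s$-sections are uniformly $\delta$-close in Hausdorff distance, which after substituting $s=g_\alpha(\tau)$ is exactly~\eqref{eq:threshtrack} on all of $(\tau_0,+\infty)$.

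I expect the main obstacle to be the careful application of Fenichel's theorem to the \emph{compact} normally hyperbolic critical manifold \emph{with boundary} --- inflowing at $s=s_0$ and meeting the invariant face $S^+$ at $s=1$ --- and in particular obtaining persistence of its local stable manifold as a $C^1$ object uniformly up to and including $s=1$, with $s$-sections depending continuously on $s$, so that~\eqref{eq:threshtrack} holds uniformly in $\tau$ rather than only on compact subintervals. Control of the behaviour near $S^+$ comes from the choice $\alpha\in(0,\rho)$ via Propositions~\ref{prop:csdyn} and~\ref{prop:invsete-}; the remaining bookkeeping --- matching $W^s_{loc}(\tilde H_\alpha^{[r]})$ with $W^{s,[r]}_{\alpha,loc}(\tilde\eta^+)$ near $s=1$, and checking forward invariance and the stable-set property~\eqref{eq:etastable} of the resulting nonautonomous set --- is handled exactly as in the proof of Proposition~\ref{prop:invsete-}(b).
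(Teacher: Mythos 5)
Your proposal is correct and follows essentially the same route as the paper: part~(a) is identical (Proposition~\ref{prop:edgecontinues} at $\lambda^*=\lambda^+$ plus asymptotic constancy to choose $\tau_0$), and part~(b) likewise passes through the compactified fast--slow system and Fenichel persistence before identifying the perturbed manifold with an R-tipping threshold via Proposition~\ref{prop:invsete-}(b). The only (cosmetic) difference is that the paper applies Fenichel's theorem directly to the normally repelling codimension-one manifold $\tilde\Theta^{[0]}_\alpha$, whereas you apply it to the saddle-type critical manifold $\tilde H^{[0]}_\alpha$ and persist its local stable fibration, which produces the same perturbed object; your explicit remark that the crux is Fenichel for a compact manifold with one boundary component in the invariant face $S^+$ (controlled by the choice $\alpha\in(0,\rho)$) is a fair and accurate reading of where the argument's real content lies.
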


\proof 
(a) 
Note that, from Definitions~\ref{defn:rthr} and~\ref{defn:edgestate}, the future limit system~\eqref{eq:odea+} has a regular threshold
$\theta^+$ containing $\eta^+$, and  $\theta^+$ and $\eta^+$ are normally hyperbolic. On applying Proposition~\ref{prop:edgecontinues} for the case $\lambda^*=\lambda^+$,  they
can be continued on some neighbourhood $Q$ of $\lambda^+$ to families of equilibrium regular edge states $\eta(\lambda)$ and regular thresholds $\theta(\lambda)$ that vary $C^1$-smoothly with $\lambda \in Q$. 
Pick any such $Q\subseteq P_\Lambda$ together with a $\T_0$ such that
$Q= \overline{\left\{\Lambda(\T): \T\in (\T_0,+\infty)\right\}}$.
This gives a moving equilibrium regular edge state $\eta(\Lambda(\T)$ on $I=(\T_0,+\infty)$ that limits to $\eta^+$, and a moving regular threshold $\theta(\Lambda(\T))$ on $I=(\T_0,+\infty)$ that limits to $\theta^+$ and contains $\eta(\Lambda(\T)$.\\
(b)
Choose the compactification parameter $\alpha$  that satisfies Proposition~\ref{prop:regular} for any $r>0$.
Let $s_0=g_\alpha(\T_0)$, and note that
$$
\tilde{\Theta}^{[0]}_\alpha:=\{(\theta(\Lambda_\alpha(s)),s)~:~s \in [s_0, 1]\},
$$
is a normally hyperbolic forward-invariant manifold in the extended phase space of the $r=0$ compactified system~\eqref{eq:odeextbtau}, that corresponds to the moving regular threshold $\theta(\Lambda(\T))$ on $I=(\T_0,+\infty)$. Note that $\tilde{\Theta}^{[0]}_\alpha$ contains $\tilde{\theta}^+$ and $\tilde{\eta}^+$.
It then follows from~\cite[Thm 9.1]{Fenichel1979} that,
for any $\delta>0$, we can choose a sufficiently small $r^*>0$, so that there is a perturbed normally hyperbolic  manifold $\tilde{\Theta}^{[r]}_\alpha$ that lies 
$\delta$-close to $\tilde{\Theta}^{[0]}_\alpha$ in the sense of~\eqref{eq:threshtrack}
for  any $r\in(0,r^*)$ and all $s\in[s_0,1]$
in the compactified system~\eqref{eq:odeextbtau}.
Furthermore, $\tilde{\Theta}^{[r]}_\alpha$ contains $\tilde{\eta}^+$, meaning that it is contained
within the stable manifold of $\tilde{\eta}^+$.
For any $r\in(0,r^*)$, pick a forward-invariant subset of 
$\tilde{\Theta}^{[r]}_\alpha$ on $[s_0,1]$. On applying Proposition~\ref{prop:invsete-}(b), this forward-invariant subset corresponds to an R-tipping threshold $\Theta^{[r]}(\T)$ that is anchored at infinity by $\eta^+$ and lies $\delta$-close to the moving threshold $\theta(\Lambda(\T))$ for all $\T >\T_0$ in the nonautonomous system~\eqref{eq:odewithrs}.
\qed

\subsection{Threshold Instability as a Criterion for R-tipping}
\label{sec:Rtippingcriteria}

This section maintains our goal of a mathematical framework that is applicable, and follows the approach in~\cite{Ashwin2016}.
Specifically, we use simple properties of the autonomous frozen system~\eqref{eq:odea}, and the external input $\Lambda$, to give rigorous yet easily
testable criteria for R-tipping in the nonautonomous 
system~\eqref{eq:odewithr} or~\eqref{eq:odewithrs}.
These criteria are  for moving sinks on $I=\R$ and R-tipping from $e^-$ via loss of end-point tracking,  due to crossing regular R-tipping thresholds anchored at infinity by an equilibrium regular R-tipping edge state.

Reference~\cite[Theorem 3.2]{Ashwin2016} uses the notion of  ``forward basin 
stability" to give sufficient conditions for such
R-tipping to occur, and to be excluded, in one-dimensional (scalar) systems. Recent work~\cite{Kiers2018,Xie2019} suggests that simple 
testable criteria to exclude  such R-tipping will be much less easy to formulate for 
higher dimensional systems unless there are additional constraints. 
The main reason is that, in higher dimensions, forward basin stability does not exclude the possibility of R-tipping.

Below, we use the notion of ``(forward) threshold instability" introduced in 
Sec.~\ref{sec:thr_inst} to give sufficient conditions for the occurrence of such R-tipping 
in arbitrary dimension.
In case (a), we give a sufficient condition to identify autonomous frozen systems that can exhibit  such R-tipping for suitably chosen external inputs $\Lambda$. 
In case (b), we give a sufficient condition for  such R-tipping to occur in a nonautonomous system with a (possibly reparametrized)  given external input $\Lambda$. This case is a generalization of \cite[Theorem 3.2 part 2]{Ashwin2016}.
\begin{theorem}
  \label{thm:Rtip}
  Consider a nonautonomous system~(\ref{eq:odewithrs}) with a parameter path $P$. Suppose the autonomous frozen system~\eqref{eq:odea} has a hyperbolic sink $e(\lambda)$ that  varies $C^1$-smoothly with 
$\lambda\in P$, and an equilibrium regular  edge state $\eta(\lambda)$ with a regular threshold $\theta(\lambda)$.
 \begin{enumerate}
 	\item[(a)] 
 	If $e(\lambda)$ is threshold unstable on $P$ due to $\theta(\lambda)$, then there is an exponentially
     bi-asymptotically constant input $\Lambda(\T)$ that
 	traces out $P_\Lambda = P$ and gives R-tipping  from $e^-$
 	in the nonautonomous system~(\ref{eq:odewithrs}).
    \item[(b)]
    Consider a given  exponentially bi-asymptotically constant input $\Lambda(\T)$ tracing out $P_\Lambda = P$ such that $e(\Lambda(\T))$ is forward 
    threshold unstable due to $\theta(\Lambda(\T))$, and
    $\eta(\Lambda(\T))$ limits to $\eta^+$. Then, there is R-tipping  from $e^-$ in the nonautonomous system~\eqref{eq:odewithrs} for  $\Lambda$ with suitably reparametrised time, i.e. for some $\tilde{\Lambda}(\T) = \Lambda(\sigma(\T))$ tracing out the same path $P_{\tilde{\Lambda}}=P_{\Lambda}=P$, where $\sigma$ is a strictly monotonic increasing function.
  \end{enumerate}
\end{theorem}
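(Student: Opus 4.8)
The plan is to prove~(b) first by a continuity (shooting) argument in a one‑parameter family of reparametrised inputs, and then to deduce~(a) by constructing an input that realises the given threshold instability. Fix a compactification parameter $\alpha\in(0,\rho)$, so Propositions~\ref{prop:regular},~\ref{prop:csdyn},~\ref{prop:invsete-} and Theorems~\ref{thm:tracking},~\ref{thm:trackingthresholds} all apply. For any input considered below and any sufficiently small rate $r$, Theorem~\ref{thm:tracking}(b) already supplies a \emph{non‑tipping} rate: $x^{[r]}(\T,e^-)$ exists on $\R$ and end‑point tracks the moving sink, so $x^{[r]}(\T,e^-)\to e^+\neq\eta^+$. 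Hence, by Definition~\ref{defn:Rtip}(b), for~(b) it is enough to produce, for a suitably reparametrised input, one rate $r_1$ with $x^{[r_1]}(\T,e^-)\to\eta^+$. To set this up, forward threshold instability gives finite $\T_a<\T_b$ with $e(\Lambda(\T_a))\in\theta(\Lambda(\T_b))$ and $\Delta_\Lambda$ taking both signs near $(\T_a,\T_b)$; orient $\theta$ via Proposition~\ref{prop:edgecontinues}(c) so that the same‑time moving sink always lies on the $+$ side of $\theta(\Lambda(\cdot))$ (possible since $d_s(e(\Lambda(\T)),\theta(\Lambda(\T)))$ is continuous, never zero, hence of constant sign), and pick $\T_1'<\T_2'$ close to $(\T_a,\T_b)$ with $\Delta_\Lambda(\T_1',\T_2')=-m$ for some $m>0$. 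Fix a large $\T_0$ and, for $\epsilon>0$, let $\sigma_\epsilon:\R\to\R$ be an increasing $C^1$ diffeomorphism that is a translation for $\T\le\T_0$ with $\sigma_\epsilon(\T_0)=\T_1'$, maps $[\T_0,\T_0+\epsilon]$ onto $[\T_1',\T_2']$, and is again a translation for $\T\ge\T_0+\epsilon$; then $\tilde\Lambda_\epsilon:=\Lambda\circ\sigma_\epsilon$ is exponentially bi‑asymptotically constant with limits $\lambda^\pm$, $e(\tilde\Lambda_\epsilon(\T))$ is a moving sink on $\R$, $\eta(\tilde\Lambda_\epsilon(\T))\to\eta^+$, $P_{\tilde\Lambda_\epsilon}=P$, and the only effect of $\epsilon$ is to compress the traversal of the arc from $\Lambda(\T_1')$ to $\Lambda(\T_2')$ into an input‑time window of width $\epsilon$.

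Now fix a small rate $r$, put $\T_*(\epsilon)=\T_0+\epsilon$ (just after the window), and set $F(\epsilon)=d_s\big(x^{[r]}(\T_*(\epsilon),e^-),\,\Theta^{[r]}(\T_*(\epsilon))\big)$ for the input $\tilde\Lambda_\epsilon$, where $\Theta^{[r]}$ is an R‑tipping threshold anchored at $\eta^+$, built as a section of the codimension‑one embedded orientable $W^{s,[r]}_{\alpha,\mathrm{loc}}(\tilde\eta^+)$ (defined at $\T_*$ because $\T_0$ is large) as in Proposition~\ref{prop:invsete-}(b). Since $\tilde\Lambda_\epsilon$ varies continuously with $\epsilon$ while $\tilde e^-,\tilde\eta^+$ do not, $F$ is continuous on $(0,\infty)$. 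For $\epsilon=1$ the window is crossed slowly on the system timescale, so by Theorems~\ref{thm:tracking}(b),~\ref{thm:trackingthresholds} the pullback attractor is $\delta$‑close to the moving sink and $\Theta^{[r]}$ is $\delta$‑close to the moving regular threshold at $\T_*$; as the same‑time moving sink lies at signed distance $\ge m'>0$ on the $+$ side of its threshold (compactness of $P$), we get $F(1)>0$ once $r$ is small. For $\epsilon\ll r$ the window has system‑time length $\epsilon/r\ll1$, so $x$ changes by $O(\epsilon/r)$ across it; having tracked $e(\tilde\Lambda_\epsilon(\cdot))$ up to $\T_0$, the state at $\T_*$ is within $\delta+O(\epsilon/r)$ of $e(\Lambda(\T_1'))$, while $\Theta^{[r]}(\T_*)$ is within $\delta$ of $\theta(\Lambda(\T_2'))$ by Theorem~\ref{thm:trackingthresholds} applied to the slow post‑window portion; since $d_s(e(\Lambda(\T_1')),\theta(\Lambda(\T_2')))=-m$, choosing $r$ and then $\epsilon$ small enough makes $F(\epsilon)<0$. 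The intermediate value theorem then yields $\epsilon^*>0$ with $F(\epsilon^*)=0$, i.e. $x^{[r]}(\T_*(\epsilon^*),e^-)\in\Theta^{[r]}(\T_*(\epsilon^*))$; the stable‑set property~\eqref{eq:etastable} together with uniqueness of solutions gives $x^{[r]}(\T,e^-)\to\eta^+$ as $\T\to+\infty$ for the input $\tilde\Lambda_{\epsilon^*}$. Combined with the non‑tipping rate above, this is R‑tipping from $e^-$ for $\sigma=\sigma_{\epsilon^*}$, proving~(b).

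For~(a), threshold instability on $P$ provides $(\lambda_a,\lambda_b)\in P^2$ with $e(\lambda_a)\in\theta(\lambda_b)$ and $d_s(e(\cdot),\theta(\cdot))$ changing sign near $(\lambda_a,\lambda_b)$ in $P^2$. Since $P$ is a compact, path‑connected closure of a $C^1$ curve, one constructs an exponentially bi‑asymptotically constant $\Lambda:\R\to\R^d$ with $\overline{\Lambda(\R)}=P$, with past and future limits chosen in $P$ so that $\eta^+=\eta(\lambda^+)$ is the given regular edge state, and arranged to pass through $\lambda_a$ at a finite $\T_a$ and $\lambda_b$ at a finite $\T_b>\T_a$ while sweeping out neighbourhoods of $\lambda_a,\lambda_b$ in $P$ near those times. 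Then $e(\Lambda(\T))$ is a moving sink tracing $P_\Lambda=P$ and the $P^2$‑sign change transfers to a sign change of $\Delta_\Lambda$ near $(\T_a,\T_b)$, so $e(\Lambda(\T))$ is forward threshold unstable due to $\theta(\Lambda(\T))$; applying part~(b) to this $\Lambda$ gives R‑tipping from $e^-$ in~\eqref{eq:odewithrs}.

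The main obstacle is the regime $\epsilon\ll r$ in the middle step: showing that right after the compressed window the state sits strictly on the negative side of the time‑dependent R‑tipping threshold. This rests on the quantitative margin $m$ coming from forward threshold instability (rather than just $\Delta_\Lambda=0$), on the elementary displacement bound $O(\epsilon/r)$ across the window, and on Theorem~\ref{thm:trackingthresholds} applied to the slow post‑window input — which is precisely why $\T_0$ is taken large, so that $\Theta^{[r]}$ exists and lies $C^1$‑close to the frozen threshold at $\T_*$. By contrast, the continuity of $F$ in $\epsilon$, the exponential decay of $\tilde\Lambda_\epsilon$, and the curve‑construction in~(a) are routine bookkeeping.
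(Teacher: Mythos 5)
Your proof follows the same high-level blueprint as the paper's — reparametrise time to create a short ``fast window'', use the compactification and the tracking results (Theorems~\ref{thm:tracking},~\ref{thm:trackingthresholds}) to control the pullback attractor and the R-tipping threshold outside the window, and close with an intermediate value argument on a signed distance — but the details are a genuinely different route, and the comparison is instructive.

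Where you differ from the paper: (i) \emph{Choice of shooting parameter.} The paper fixes $r$ and $\epsilon$ and shoots in the window \emph{position} $(\T_\alpha,\T_\beta)$ along a segment joining $(\T_a^-,\T_b^-)$ to $(\T_a^+,\T_b^+)$, where the forward-threshold-instability hypothesis directly yields $\Delta_\Lambda=\mp\delta$ at the two ends. You instead fix $r$ and the window endpoints $(\T_1',\T_2')$ and shoot in the window \emph{width} $\epsilon$, obtaining the two signs from the diagonal value $\Delta_\Lambda(\T_2',\T_2')>0$ at $\epsilon=1$ and the off-diagonal value $\Delta_\Lambda(\T_1',\T_2')=-m<0$ as $\epsilon\to 0$. (ii) \emph{Shape of $\sigma$.} The paper's $\sigma_{\T_\alpha,\T_\beta,\epsilon}$ has slope $\epsilon$ outside the window, so $\tilde\Lambda$ decays with coefficient $\epsilon\rho$; your $\sigma_\epsilon$ is a pure translation outside the window, so the pre- and post-window inputs (and hence the decay coefficient, the tracking constants, and the domain on which $\Theta^{[r]}$ exists) are literally $\epsilon$-independent. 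That is a nice simplification: it makes the ``tracking up to $\T_0$'' and ``tracking after $\T_*$'' estimates manifestly uniform in $\epsilon$, whereas the paper controls $\epsilon$-dependence through the three explicit estimates~\eqref{eq:d1},~\eqref{eq:dH},~\eqref{eq:d2}. (iii) You replace the paper's ``local continuity in $\epsilon$'' step (eq.~\eqref{eq:d2}) with an explicit $O(\epsilon/r)$ displacement bound across the compressed window.

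One step in your argument is subtler than what the paper does and deserves more care. To get $F(1)>0$ you rely on the diagonal signed distance $\Delta_\Lambda(\T,\T)=d_s(e(\Lambda(\T)),\theta(\Lambda(\T)))$ being of definite (chosen positive) sign, arguing it is ``continuous, never zero, hence of constant sign'' and then invoking compactness of $P$ for a uniform margin $m'$. Two caveats: first, $\theta(\lambda)$ is only guaranteed to exist on a neighbourhood $Q$ of the relevant parameters (Proposition~\ref{prop:edgecontinues}), not necessarily on all of $P$, so ``constant sign'' and the compactness argument apply only on $Q$ — which is enough for your purposes, since you only ever evaluate the diagonal at $\Lambda(\T_2')$ near $\lambda_b$, but the uniform $m'$ over $P$ is overclaimed. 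Second, the signed distance~\eqref{eq:sdist} is $+\infty$ outside the neighbourhood $N$ of $\theta$, so ``continuous on $Q$'' implicitly needs the sink to stay inside $N$; since $e(\lambda)\notin\theta(\lambda)$ (a sink cannot lie on a normally repelling invariant manifold) this can be arranged by choosing $N$ appropriately, but it should be said. The paper's version of this step compares $\Delta_\Lambda$ at two nearby off-diagonal pairs where the forward-threshold-instability hypothesis directly produces opposite signs, thereby avoiding the diagonal entirely and any orientation discussion. Apart from that, your part~(a) reduction to part~(b) matches the paper's.
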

\begin{rmk}
Note that:
\begin{itemize}
    \item
    The R-tipping criteria in Theorem~\ref{thm:Rtip} are sufficient but not necessary: there are examples of (non-degenerate)
    R-tipping for a moving sink on $I=\R$ in the absence of forward threshold instability and presence of forward basin stability~\cite{Kiers2018,Xie2019}.
     \item
    The conditions in Theorem~\ref{thm:Rtip} do not necessary imply that the R-tipping is non-degenerate. 
    Nonetheless, we expect that a  solution $x^{[r]}(e^-)$ and the codimension-one  R-tipping threshold $\Theta^{[r]}(\T)$ will cross transversely on varying $r$, and suggest that ``(forward) threshold instability" will typically give non-degenerate R-tipping.
    \item
     The R-tipping in Theorem~\ref{thm:Rtip} is from $e^-$ and for a moving sink on $I=\R$, which is often the case of interest.
    We discuss generalisations of Theorem~\ref{thm:Rtip} to 
    R-tipping from a fixed $(x_0,\T_0)$ and/or for a moving sink on a finite or semi-infinite time interval $I$
    in Section~\ref{sec:Conclusions}.
    \item
    In the simplest cases in Theorem~\ref{thm:Rtip}(b),
    we may be able to choose $\tilde{\Lambda} = {\Lambda}$ and obtain R-tipping for a suitable choice of the rate parameter  $r=r^*$~\cite{Xie2019}, but more generally, $\tilde{\Lambda}$ is a time reparametrisation of $\Lambda$  with the same limiting behaviour.
    In other words, we can ensure that  the pullback attractor is on different sides of the R-tipping threshold for  a fixed $r$ and different $\tilde{\Lambda}$, but it is more complex to ensure that
    this occurs for a fixed $\tilde{\Lambda}=\Lambda$ and different $r$. 
\end{itemize}
\end{rmk}

The proof of Theorem~\ref{thm:Rtip} is given in Appendix~\ref{app:Rtipproof}.

\subsection{Connecting Orbit as a General Criterion for R-tipping and a General Method for Computing Critical Rates}
\label{sec:computing}

While B-tipping can be found and continued in system parameters on applying tools from theory of autonomous bifurcations~\cite{matcont,auto,Kuznetsov2004} to the autonomous frozen system~\eqref{eq:odea}, this is not the case for nonautonomous R-tipping. Furthermore, whereas Section~\ref{sec:Rtippingcriteria} considers R-tipping  for moving sinks on $I=\R$ (e.g. see Figure~\ref{fig:Rtip}(a)), some R-tipping occur from moving sinks on a  semi-infinite or even finite time interval $I\subset\R$ (e.g. see Figure~\ref{fig:Rtip}(b)). Therefore, there is a need for
general criteria and methods to find different nonautonomous R-tipping and continue them in system parameters.

To address this need, in this section we continue with an applicable mathematical framework.
Our focus remains on R-tipping  via loss of end-point tracking, due to crossing regular R-tipping thresholds anchored at infinity by an equilibrium regular R-tipping edge state.
However, there are two differences from Section~\ref{sec:Rtippingcriteria}. First, we relax the assumption of moving sinks on $I=\R$. Second,
we use properties of the autonomous compactified system~\eqref{eq:odeextbtau} to give rigorous criteria for R-tipping and critical rates in the nonautonomous 
system~\eqref{eq:odewithr} or~\eqref{eq:odewithrs}.

The proof of Theorem~\ref{thm:Rtip} used the 
compactification  technique of Section~\ref{sec:compact} to
show there is R-tipping in the nonautonomous system~\eqref{eq:odewithrs} by computing codimension-one  
heteroclinic connections  in the compactified system~\eqref{eq:odeextbtau}. 
A similar approach     has previously been used  on a case-by-case basis to compute critical rates  in specific examples of R-tipping~\cite{Alkhayuon2018,Ashwin2016,Ashwin2012,Perryman2014,Xie2019}. We show here that  connecting (heteroclinic) orbits of~\eqref{eq:odeextbtau} can be used to:
\begin{itemize}
    \item 
    Give necessary and sufficient conditions for the occurrence of non-degenerate
    R-tipping  from $e^-$ or $(x_0,\T_0)$ for moving sinks on any time interval $I\subseteq\R$.
    \item
    Give a general method for computing critical rates  for R-tipping.  This method also applies to more complicated regular R-tipping edge states such as limit cycles or quasiperiodic tori.
\end{itemize}
To be more specific, recall the notation from Section~\ref{sec:compact} for equilibria of the limit systems embedded in the extended phase space of the compactified system
$$
\tilde{e}^\pm=(e^\pm,\pm 1)
\quad\mbox{and}\quad
\tilde{\eta}^+=(\eta^+,1),
$$
and keep in mind that $s_0=g_\alpha(\tau_0)$.
In the case of asymptotic constant input with a future limit $\lambda^+$, R-tipping from a fixed $(x_0,\tau_0)$ in  nonautonomous system~\eqref{eq:odewithrs} depends on where $(x_0,s_0)$ lies in relation to the 
stable manifold $W_\alpha^{s,[r]}(\tilde{\eta}^{+})$ in the extended phase space of the autonomous compactified system~\eqref{eq:odeextbtau}. Here, $(x_0,s_0$) is 
fixed\,\footnote{Note that a fixed $(x_0,t_0)$ in nonautonomous system~\eqref{eq:odewithr} gives a rate-dependent 
$(x_0,s_0^{[r]})$ in the compactified system~\eqref{eq:odeextb}.}, but the position of $W_\alpha^{s,[r]}(\tilde{\eta}^{+})$ typically changes with $r$. 
R-tipping from  $(x_0,\tau_0)$ occurs when there is 
a {\em connecting orbit} from $(x_0,s_0)$ to 
$\tilde{\eta}^{+}$  
in the compactified system. Such connecting orbits arise when $(x_0,s_0)$ crosses 
$W_\alpha^{s,[r]}(\tilde{\eta}^{+})$ under varying $r$.
In the bi-asymptotic constant input case, R-tipping from $e^-$
in nonautonomous system~\eqref{eq:odewithrs} depends on where the one-dimensional unstable manifold 
$W_\alpha^{u,[r]}(\tilde{e}^{-})$ lies in relation to 
$W_\alpha^{s,[r]}(\tilde{\eta}^{+})$
in the extended phase space of the compactified system~\eqref{eq:odeextbtau}. 
 Here, the  positions of both
$W_\alpha^{u,[r]}(\tilde{e}^{-})$ and $W_\alpha^{s,[r]}(\tilde{\eta}^{+})$
typically change with $r$.
R-tipping from $e^-$ occurs when there is a 
{\em  connecting heteroclinic orbit}   from $\tilde{e}^-$ to $\tilde\eta^+$
in the compactified system. Such connecting orbits arise when $W_\alpha^{u,[r]}(\tilde{e}^{-})$ and $W_\alpha^{s,[r]}(\tilde{\eta}^{+})$ cross each other  under varying $r$. 
These observations allow us to state necessary and sufficient conditions for the occurrence of certain non-degenerate R-tipping in~\eqref{eq:odewithrs} in terms of non-degeneracy criteria for connecting (heteroclinic) orbits in~\eqref{eq:odeextbtau}.
To formulate these criteria in a Proposition, we use
$$
\mbox{trj}^{[r]}_\alpha(x_0,s_0)\subset\R^n\times[-1,1],
$$
to denote a trajectory started from $(x_0,s_0)$ in the phase space of the compactified system~\eqref{eq:odeextbtau} parametrised by the rate $r>0$. 
If this trajectory converges to $\tilde{e}^-$ backward in time, we write
$$
\mbox{trj}^{[r]}_\alpha(\tilde{e}^-) \subset W_\alpha^{u,[r]}(\tilde{e}^{-}),
$$
using the relation from Proposition~\ref{prop:invsete-}(a).
We also write
$$
\mbox{trj}^{[r]}_\alpha \subset\R^n\times[-1,1],
$$
to mean either $\mbox{trj}^{[r]}_\alpha(x_0,s_0)$ or $\mbox{trj}^{[r]}_\alpha(\tilde{e}^-)$, depending on the context.
\begin{proposition}
\label{prop:rtip_compact}
Consider the nonautonomous system~(\ref{eq:odewithrs}) with an input $\Lambda(\T)$ satisfying either of the following conditions:
\begin{itemize}
    \item [1.] 
    $\Lambda(\T)$  is exponentially asymptotically constant to $\lambda^{+}$. The future limit system~\eqref{eq:odea+} has an equilibrium regular  R-tipping edge state $\eta^+$.
    \item[2.]
    $\Lambda(\T)$  is bi-exponentially asymptotically constant to $\lambda^{-}$ and $\lambda^+$. In addition to  condition 1, the past limit system~\eqref{eq:odea-} has a hyperbolic sink $e^-$.
\end{itemize}
Let $\mbox{trj}^{[r]}_\alpha = \mbox{trj}^{[r]}_\alpha(x_0,s_0)$ in cases 1 or 2, or $\mbox{trj}^{[r]}_\alpha = \mbox{trj}^{[r]}_\alpha(\tilde{e}^-)\subset W_\alpha^{u,[r]}(\tilde{e}^{-})$ in case 2.
The nonautonomous system~\eqref{eq:odewithrs} undergoes non-degenerate R-tipping at $\eta^+$ with critical rate $r_c > 0$  if and only if, in the compactified system~\eqref{eq:odeextbtau}: 
\begin{itemize}
    \item [(a)]
    For $r=r_c$, $\mbox{trj}^{[r_c]}_\alpha$ is a (heteroclinic) connection to the regular R-tipping edge state $\tilde{\eta}^+$:
    $$
    \mbox{trj}^{[r_c]}_\alpha
    \subset W_\alpha^{s,[r_c]}(\tilde{\eta}^{+}).
    $$
    \item [(b)]
    There is a $\delta>0$ such that for $r\in(r_c-\delta,r_c)$ and  $r\in(r_c, r_c+\delta)$, 
    $\mbox{trj}^{[r]}_\alpha$
    lies on different sides of $W_\alpha^{s,[r]}(\tilde{\eta}^{+})$.
    \item[(c)]
     Each branch of $W^u(\tilde{\eta}^+)$ is a connection from $\tilde{\eta}^+$ to an attractor.
\end{itemize}
\end{proposition}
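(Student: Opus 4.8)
The plan is to transport both sides of the claimed equivalence into the compactified system~\eqref{eq:odeextbtau} via the dictionary of Propositions~\ref{prop:invsete-} and~\ref{prop:edgetails}, so that verifying the equivalence reduces to bookkeeping. First I would fix a compactification parameter $\alpha\in(0,\rho)$, so that Propositions~\ref{prop:regular}, \ref{prop:csdyn} and~\ref{prop:invsete-} apply, and record that the two cases of the hypothesis are handled uniformly: with $s_0=g_\alpha(\T_0)$, a solution $x^{[r]}(\T)$ of~\eqref{eq:odewithrs} (started from a fixed $(x_0,\T_0)$, or the local pullback attractor $x^{[r]}(\T,e^-)$ in case~2) corresponds to the compactified trajectory $\mbox{trj}^{[r]}_\alpha$ (respectively $\mbox{trj}^{[r]}_\alpha(x_0,s_0)$, or $\mbox{trj}^{[r]}_\alpha(\tilde{e}^-)\subset W_\alpha^{u,[r]}(\tilde{e}^-)$ by Proposition~\ref{prop:invsete-}(a)). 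Under this correspondence, $x^{[r]}(\T)\to\eta^+$ as $\T\to+\infty$ is equivalent to $\mbox{trj}^{[r]}_\alpha\subset W_\alpha^{s,[r]}(\tilde{\eta}^+)$; the R-tipping threshold $\Theta^{[r]}(\T)$ is a section of the codimension-one embedded orientable local stable manifold $W_{\alpha,loc}^{s,[r]}(\tilde{\eta}^+)$ (Propositions~\ref{prop:csdyn}(b) and~\ref{prop:invsete-}(b)), so ``$x^{[r]}(\T)$ lies on different sides of $\Theta^{[r_c]}(\T)$'' translates, near $\tilde{\eta}^+$, into ``$\mbox{trj}^{[r]}_\alpha$ lies on different sides of $W_\alpha^{s,[r]}(\tilde{\eta}^+)$''; and the embedded edge tails $\tilde{x}^{[r_c^+]}$, $\tilde{x}^{[r_c^-]}$ each contain one branch of $W^u(\tilde{\eta}^+)$ (Proposition~\ref{prop:invsete-}(b)).

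For the forward implication, suppose~\eqref{eq:odewithrs} undergoes non-degenerate R-tipping at $\eta^+$ with critical rate $r_c$. By Definition~\ref{defn:Rtiptypes}(a), $x^{[r_c]}(\T)\to\eta^+$, the edge tails $x^{[r_c^+]}$ and $x^{[r_c^-]}$ are distinct, and each is a connection from $\eta^+$ to an attractor of the future limit system~\eqref{eq:odea+}. Translating $x^{[r_c]}(\T)\to\eta^+$ gives~(a). For~(c), each embedded edge tail contains a branch of $W^u(\tilde{\eta}^+)$, and by~\cite[Prop.~3.1]{Wieczorek2019compact} an attractor $a^+$ of~\eqref{eq:odea+} lifts to the attractor $\tilde{a}^+=\{(x,1):x\in a^+\}\subset S^+$ of~\eqref{eq:odeextbtau}; since the edge tail is, by Definition~\ref{defn:edgetails}, the limit set of the $x^{[r]}(\T)$ as $r\to r_c^\pm$ and already contains the branch, it coincides with the closure of that branch (here one also uses that $x^{[r]}(\T)$ shadows that branch for $r$ near $r_c$ and converges to $\tilde{a}^+$), so each branch of $W^u(\tilde{\eta}^+)$ is a connection to an attractor --- this is~(c). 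Finally, non-degeneracy is precisely the hypothesis of Proposition~\ref{prop:edgetails}(c), whose conclusion (that $x^{[r]}(\T)$ lies on different sides of $\Theta^{[r_c]}(\T)$ for $r$ just below and just above $r_c$) translates --- using $C^1$-dependence of $W_\alpha^{s,[r]}(\tilde{\eta}^+)$ on $r$ to fix ``sides'' consistently for $|r-r_c|$ small --- into~(b).

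For the converse, assume (a)--(c). By~(a) and the dictionary, $x^{[r_c]}(\T)\to\eta^+$; by~(b), $\mbox{trj}^{[r]}_\alpha\not\subset W_\alpha^{s,[r]}(\tilde{\eta}^+)$ and hence $x^{[r]}(\T)\not\to\eta^+$ for $0<|r-r_c|<\delta$. Thus~\eqref{eq:odewithrs} undergoes R-tipping in the sense of Definition~\ref{defn:Rtip}, $r_c$ is a critical rate, and $\eta^+$ is therefore a regular R-tipping edge state (Definition~\ref{def:rtipedge}). By~(c) and Proposition~\ref{prop:invsete-}(b), each edge tail contains a branch of $W^u(\tilde{\eta}^+)$ connecting $\tilde{\eta}^+$ to an attractor of~\eqref{eq:odeextbtau} lying in $S^+$; by the same reasoning as above this branch's closure is the edge tail, and identifying attractors in $S^+$ with attractors of~\eqref{eq:odea+} (as in the proof of Proposition~\ref{prop:edgetails}(b)), each edge tail is a connection from $\eta^+$ to an attractor. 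Finally,~(b), translated, is the hypothesis of Proposition~\ref{prop:edgetails}(a), so $x^{[r_c^+]}\neq x^{[r_c^-]}$. All clauses of Definition~\ref{defn:Rtiptypes}(a) now hold, so the R-tipping is non-degenerate.

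I expect the main obstacle to be the careful handling of ``lies on different sides'', needed in both directions. Since $W_\alpha^{s,[r]}(\tilde{\eta}^+)$ itself moves with $r$, one must first give ``side'' a consistent meaning on a neighbourhood of $r_c$ --- invoking $C^1$-dependence of (local) stable manifolds on the parameter $r$ and shrinking $\delta$ --- before transporting between ``$\mbox{trj}^{[r]}_\alpha$ changes sides of $W_\alpha^{s,[r]}(\tilde{\eta}^+)$'' and ``$x^{[r]}(\T)$ changes sides of $\Theta^{[r_c]}(\T)$''. A related subtlety is that Proposition~\ref{prop:edgetails} supplies only the two one-directional implications --- sides differ $\Rightarrow$ tails differ (part~(a)), and tails differ with each connecting to an attractor $\Rightarrow$ sides differ (part~(c)) --- rather than a bare equivalence; so the argument must be arranged so that the attractor hypothesis built into non-degeneracy licenses part~(c) in the forward direction, while conditions~(b) and~(c) together license part~(a) in the converse. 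Showing that an edge tail equals (not merely contains) the closure of a branch of $W^u(\tilde{\eta}^+)$ is the other point that needs care, and rests on the shadowing/convergence statement already packaged in Propositions~\ref{prop:invsete-}(b) and~\ref{prop:edgetails}(b); the remaining steps are routine transport through the compactification.
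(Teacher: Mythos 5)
Your proof is correct and follows essentially the same route as the paper's: the equivalence is established by transporting the clauses of Definition~\ref{defn:Rtiptypes}(a) through Propositions~\ref{prop:invsete-} and~\ref{prop:edgetails} into the compactified system and matching them against conditions (a)--(c). The two subtleties you flag --- fixing a consistent notion of ``sides'' as $W_\alpha^{s,[r]}(\tilde{\eta}^+)$ varies with $r$, and showing an edge tail equals rather than merely contains the closure of a branch of $W^u(\tilde{\eta}^+)$ --- are genuine, and the published proof treats both informally (it passes directly from condition~(c) to ``each edge tail connects $\eta^+$ to an attractor'' without further comment), so your explicit handling of them is a welcome tightening rather than a divergence in method.
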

\begin{rmk}
\label{rmk:nondeghet}
Various conditions are usually proposed for a heteroclinic orbit to be considered as non-degenerate. These are typically assumptions about the orbit and the limiting states as well as more subtle assumptions on parameter variation and the geometry of linearised behaviour; see for example~\cite{homburg2010}. We consider the connecting (heteroclinic) orbit to $\tilde{\eta}^+$ in system~\eqref{eq:odeextbtau} to be non-degenerate if:
\begin{itemize}
    \item [(i)] 
    It is found at codimension one in $r$. 
    \item [(ii)]
    The trajectory of interest $\mbox{trj}^{[r]}_\alpha$ crosses from one side of $W_\alpha^{s,[r]}(\tilde{\eta}^{+})$ to the other at $r=r_c$.
    We do not require that the crossing occurs with non-zero speed in $r$, though this is likely to be typically the case.
    \item [(iii)] There are no homoclinic connections from $\tilde{\eta}^+$ to itself or heteroclinic connections from $\tilde{\eta}^+$ to other saddle(s). 
    Note that this assumption about  $W^u(\tilde{\eta}^+)$ is not explicitly about the connecting orbit  of interest or its limiting state(s).
\end{itemize}
Then, Proposition~\ref{prop:rtip_compact} says that there is a non-degenerate R-tipping in system~\eqref{eq:odeextbtau} from Definition~\ref{defn:Rtiptypes}(a) if and only if there is a non-degenerate connecting (heteroclinic) orbit to $\tilde{\eta}^+$ in system~\eqref{eq:odeextbtau}.
\end{rmk}
%
\vspace{5mm}
\noindent
{\em Proof of Proposition~\ref{prop:rtip_compact}.}
Choose any compactification parameter $\alpha$ such that Proposition~\ref{prop:regular} applies.
Recall from Section~\ref{sec:compautsyst} that $s(\T)=g_\alpha(\T)$, and relate $\mbox{trj}^{[r]}_\alpha$
to a solution $x^{[r]}(\T)$  of the nonautonomous system~\eqref{eq:odewithrs} with fixed $r>0$:
$$
\mbox{trj}^{[r]}_\alpha = 
\left\{
\left(x^{[r]}(\T),s(\T)\right)
\right\}_{\T\in\mathbb{R}}.
$$
Recall from Proposition~\ref{prop:invsete-}(b) that $W_\alpha^{s,[r_c]}(\tilde{\eta}^{+})$ 
contains a family of regular R-tipping thresholds $\Theta^{[r]}(\T)$, and each embedded edge tail contains one  branch of the unstable manifold $W^{u}(\tilde{\eta}^+)$.
Thus, conditions (a) and (b) imply that the nonautonomous system~(\ref{eq:odewithrs}) undergoes R-tipping: there are $r_c, r_2 >0$ such that 
$x^{[r_c]}(\T)\to \eta^+$ and $x^{[r_2]}(\T)\not\to \eta^+$ as $\T\to +\infty$. 
Condition (b) also implies that the rate $r_c$ is isolated in the sense that 
$x^{[r]}(\T)\not\to \eta^+$ for $0<|r-r_c|<\delta$. Hence $r_c$ is a 
critical rate.
Condition (b) together with Proposition~\ref{prop:edgetails}(a) imply that the lower and upper edge tails of $\eta^+$ are different. 
Finally, condition (c) implies that each edge tail connects $\eta^+$ to an attractor. Hence R-tipping is non-degenerate.
Conversely, non-degenerate R-tipping implies conditions (a), (b) and (c). Specifically, R-tipping implies (a). 
Each edge tail of $\eta^+$ being a different connection from $\eta^+$ to an attractor, together with Proposition~\ref{prop:edgetails}(c), imply (b).
Each edge tail of $\eta^+$ being a connection from $\eta^+$
to an attractor implies (c).
\qed
\\

In consequence, critical rates for R-tipping in nonautonomous system~\eqref{eq:odewithrs} can be found by finding $r$ that give codimension-one or higher connecting (heteroclinic) orbits to $\tilde{\eta}^+$ in the compactified system~\eqref{eq:odeextbtau}.
It is important to note that, unlike R-tipping thresholds, these connecting orbits are one-dimensional curves, which makes them relatively easy to detect in $r$, and then continue in other parameters to obtain curves or even hypersurfaces of critical rates.
This allows us to produce nonautonomous {\em R-tipping diagrams}~\cite{OKeeffe2019,OSullivan2021,Xie2019} akin to classical autonomous bifurcation diagrams.
Non-degenerate R-tipping has  additional requirements that $\tilde{\eta}^+$ is a regular edge state, the edge tails  of $\tilde{\eta}^+$ are different, and each edge tail is a connection from $\tilde{\eta}^+$ to an attractor. 
This means that parameter continuation of critical rates may give continuation of
non-degenerate R-tipping, at least in cases where different edge tails connect to attractors that are simple enough (e.g. an equilibrium or a limit cycle) to continue as attractors in these other parameters.  
In practice, critical rates and non-degenerate R-tipping can always be computed using a shooting method.  In cases where $\tilde{\eta}^+$ is an equilibrium, a limit cycle, or possibly a quasiperiodic torus, parameter continuation can be done using numerical implementations of detection and continuation methods such as that of  Beyn~\cite{Beyn1990numerical} and Lin~\cite{krauskopf2008,Lin}, or numerical software packages such as HOMCONT~\cite{champneys1996numerical} or MATCONT~\cite{matcont} based on these methods. 

 Finally, we point out that our approach  relating R-tipping in the nonautonomous system~\eqref{eq:ode} to an $\tilde{e}^-$-to-$\tilde{\eta}^+$ heteroclinic connection in the compactified autonomous 
system~\eqref{eq:odeextbtau}
has strong parallels with an alternative approach relating R-tipping to a collision (loss of uniform asymptotic stability) of a pullback attractor that limits to $e^-$ and a pullback repeller that  limits to $\eta^+$ in the one-dimensional (scalar)  nonautonomous system~\eqref{eq:ode}~\cite{Kuehn2021,Longo2021}.

\section{Summary and Open Questions}
\label{sec:Conclusions}

This paper describes nonlinear dynamics of a multidimensional nonautonomous system~\eqref{eq:ode} (or equivalently system~\eqref{eq:odewithrs}) for quite a general class of asymptotically constant external inputs, or parameter shifts, that vary with time at a rate $r$ and decay exponentially at infinity. It uses extension to the compactified autonomous 
system~\eqref{eq:odeextbtau}--\eqref{eq:lambda_s+-} by including autonomous dynamics of the future~\eqref{eq:odea+} and past~\eqref{eq:odea-} limit systems from infinity. This approach allows us to understand the dynamics of the nonautonomous system~\eqref{eq:ode}
in terms of compact invariant sets of the autonomous future limit system.
The focus is on genuine nonautonomous R-tipping instabilities that can occur at critical rates $r=r_c$.
Asymptotically autonomous systems have been studied in the past in terms of asymptotic equivalence of two separate systems: the nonautonomous system~\eqref{eq:ode} and the future limit system~\eqref{eq:odea+}~\cite{Castillo1994,Holmes_Stuart1992,Markus1956,Robinson1996,Thieme1994}. 
A particular advantage of our approach is that all invariant sets, including  trajectories of the nonautonomous system~\eqref{eq:ode} as well as compact invariant sets of the autonomous limit systems~\eqref{eq:odea+} and~\eqref{eq:odea-}, can be related to the one autonomous compactified system~\eqref{eq:odeextbtau}--\eqref{eq:lambda_s+-}. 

Our strategy is to define R-tipping in the nonautonomous system, introduce the key concepts of R-tipping thresholds as well as R-tipping edge states and their edge tails also in the nonautonomous system, and derive the main results using the compactified system. As a starting point, Proposition~\ref{prop:csdyn} uses results from~\cite{Wieczorek2019compact} to show for exponentially bi-asymptotically constant inputs that the compactified system is in standard format for a $C^1$ smooth slow-fast system,
where the rate parameter $r$ is the timescale separation. Small $r$ corresponds to quasistatic approximation, giving rise to tracking of a branch of base attractors for the frozen system~\eqref{eq:odea}. R-tipping can be understood as a breakdown of the quasistatic approximation, giving rise to loss of tracking (i.e. moving away from the branch of base attractors) due to crossing an R-tipping threshold for some larger $r$.

We give methods to identify, classify and understand R-tipping in a wide variety of ODE models from applications.
In other words, we  generalise and extend results from~\cite{Ashwin2016} on irreversible R-tipping in one dimension  to arbitrary dimensions and to different cases of R-tipping, some of which can occur only in higher dimensional systems. 
In particular, we give tools for a fairly complete understanding of systems with equilibrium base attractors whose basin boundaries consist of regular thresholds anchored by regular equilibrium edge states. 
This culminates in two results. Theorem~\ref{thm:Rtip} gives an easily verifiable set of sufficient conditions for R-tipping to be present in a multidimensional nonautonomous system~\eqref{eq:ode} for some choice of the external input. Proposition~\ref{prop:rtip_compact} shows 
how R-tipping in the nonautonomous system corresponds to a (heteroclinic) connection to an R-tipping edge state in the compactified autonomous system, and thus gives a numerical tool for quantifying R-tipping and computing critical rates in quite general cases.

A challenge for the future is to understand and classify R-tipping   for more complicated cases such as:
\begin{itemize}
    \item [(a)]
    {\em  R-tipping  for a moving sink on a semi-infinite or finite time interval $I$.} 
    Theorem~\ref{thm:Rtip} considers R-tipping from an equilibrium attractor $e^-$ for moving sinks on $I=\R$.
    The result in cases of  R-tipping from a fixed $(x_0,\T_0)$ for a moving sink on an infinite $I=\R$ or semi-infinite  $I=(\T_-,+\infty)\subset\R$  will follow from a simple generalisation of Theorem~\ref{thm:Rtip}, that is on considering the trajectory from $(x_0,\T_0)$ rather than the one that limits to $e^-$.
    Additionally, there are cases of R-tipping from $e^-$ for a moving sink on a semi-infinite $I=(-\infty,\T_+)\subset\R$, or from a fixed $(x_0,\T_0)$ for a moving sink on a finite $I=(\T_-,\T_+)\subset\R$ or semi-infinite $I=(-\infty,\T_+)\subset\R$. In such cases, the moving sink bifurcates or disappears at some finite time, and need not even be forward threshold unstable (e.g. see Figure~\ref{fig:Rtip}(b)). Thus, such cases will require a more extensive generalisation of Theorem~\ref{thm:Rtip}.
    \item [(b)]
    {\em R-tipping from non-equilibrium attractors $\gamma^-$.}
    For systems with phase space of dimension higher than one, 
    there can be 
    R-tipping from more general attractors $\gamma^-$ including limit cycles~\cite{Alkhayuon2020,Alkhayuon2018}, quasiperiodic tori, and chaotic attractors~\cite{Alkhayuon2020weak,Kaszas2019,Lohmann2021}. 
    It is interesting to note that results on R-tipping in such cases will depend to some extent on the approach taken. For example, 
    non-degenerate 
    R-tipping according to Definition~\ref{defn:Rtiptypes} can be generically found at  codimension one or zero, depending on whether we take the pointwise or setwise approach. 
    In the pointwise approach, where one considers a single solution that limits to $\gamma^-$ as $\T\to -\infty$, non-degenerate R-tipping 
    can be generically found only at codimension-one in $r$, as explained 
    in Section~\ref{sec:Rtipcasec}.
    By contrast, in the setwise approach, one considers the set of all solutions that limit to $\gamma^-$ as $\T\to -\infty$.
    In this case, it is possible that  non-degenerate R-tipping can be found at codimension-zero in $r$: there can be an interval of $r$ such that non-degenerate R-tipping is found for any value of  $r$ within the interval and some solution in the set of solutions that limit to $\gamma^-$ as $\T\to -\infty$. 
    Furthermore,
    non-equilibrium attractors  can give rise to additional cases of R-tipping, such as ``partial R-tipping" from  a limit cycle $\gamma^-$ described in~\cite{Alkhayuon2018} (see also~\cite{Alkhayuon2020}), and to additional cases of tracking, such as ``weak tracking"~\cite{Alkhayuon2020weak} where the pullback attractor limits to an unstable subset of a chaotic attractor $\gamma^-$ as $\T\to -\infty$.  A physical measure on $\gamma^-$ can be used to quantify the probability that R-tipping takes place \cite{ashwin2021physical,newman2022physical}.
\item [(c)]
    {\em R-tipping without crossing regular thresholds.}
    For systems with phase space of dimension higher than one, it is possible 
    to have R-tipping where, as $\T\to +\infty$, the solution limits to a compact invariant set $\eta^+$ on the boundary of a basin of attraction that is not a regular edge state. Such
    $\eta^+$ may be associated with a threshold that is irregular, or with no threshold at all, for one of several possible reasons. More precisely, the boundary of a basin of attraction may include any of:
\begin{itemize}
    \item[(i)] 
    Saddle periodic orbits $\eta^+$ with codimension-one stable manifolds that are not orientable (irregular thresholds).
    \item[(ii)] 
    Chaotic saddles $\eta^+$ with codimension-one stable manifolds that are not embedded (irregular thresholds).
    \item[(iii)] 
    Compact invariant sets $\eta^+$ with stable invariant manifolds of codimension two or higher, for example a source in $\R^2$ (no thresholds).
\end{itemize}  
In all three cases, R-tipping will occur without crossing a regular threshold. 
Case (i) leads to R-tipping that does not give a change in the system behaviour.
 Case (ii) can generate basin boundaries with highly nontrivial fractal structure~\cite{McDonald1985}. 
This means that R-tipping may occur not only at isolated values of $r_c$, but also at sets of $r_c$ with nontrivial accumulation points. Case (iii) generically will not be of codimension one in $r$, but it can be for R-tipping from non-equilibrium attractors $\gamma^-$; see point (b) above. For example, in the setwise approach, trajectories from a limit cycle attractor $\gamma^-$ may interact with an equilibrium $\eta^+$ with two unstable directions at codimension one in $r$. Such ``invisible R-tipping" is documented in~\cite{Alkhayuon2018}.
\item [(d)]
    {\em R-tipping due to crossing quasithresholds.}
    In any dimension, it is possible for 
     so-called ``quasithresholds''~\cite{FitzHugh1955}
     to be present in system~\eqref{eq:ode}.
     The key difference from regular thresholds is that quasithresholds do not contain an R-tipping edge state $\eta^+$. 
     Therefore,
     \begin{itemize}
         \item[(i)]
         Quasithresholds cannot give rise to qualitative {\em R-tipping via loss of end-point tracking.} They can only give rise to quantitative {\em R-tipping via loss of $\delta$-close tracking}; see Section~\ref{sec:Rtipintro}.
         \item[(ii)]
         Rigorous definitions of quasithresholds and R-tipping via loss of $\delta$-close tracking that are relevant for applications still remain a challenge~\cite{OSullivan2021,Perryman2014}.
     \end{itemize}
     Quasithresholds can arise when a moving regular edge state disappears at some finite time~\cite[Sec.4.9]{Xie2019}, or when the frozen system is slow-fast~\cite{FitzHugh1955,OSullivan2021,Vanselow2019,Wieczorek2011}.
     Recent examples of R-tipping due to crossing quasithresholds in slow-fast systems show that {\em singular R-tipping edge states} may appear in the limit of infinite time scale separation; see e.g.~\cite{OSullivan2021}.
\item [(e)]
    {\em R-tipping for asymptotically constant external inputs with non-exponential asymptotic decay.}
    Our results assume asymptotically constant external inputs with exponential decay. This ensures that (normally) hyperbolic compact invariant sets of the autonomous limit systems remain (normally) hyperbolic when embedded in the extended phase space of the compactified system. It should be possible to generalise our results to  asymptotically constant external inputs with slower than exponential decay, provided they are ``normal" in the sense of~\cite[Definition 2.2]{Wieczorek2019compact}. Although such inputs give rise to a centre direction in the compactified system, one can show that both the ensuing centre manifold of $\tilde{e}^-$ and the centre-stable manifold of a regular equilibrium R-tipping edge state $\tilde{\eta}^+$ are unique~\cite[Theorems 3.3 and 3.4]{Wieczorek2019compact}.
    \item [(f)]
    {\em R-tipping for external inputs that are not asymptotically constant.}
    While we focus here on asymptotically constant external inputs, more complex external inputs represent another interesting direction of generalization.
    In particular, one could consider external inputs  that are asymptotically periodic or quasiperiodic. One proposed definition for R-tipping in this general case is suggested in \cite{hoyer2021rethinking,Kuehn2021,Longo2021} as a bifurcation of a pullback attractor. There are many parallels with our work on relating R-tipping to a heteroclinic connection in the compactified system (see also the last paragraph in Section~\ref{sec:computing}), but obtaining general results without imposing stringent hypotheses is likely to be a challenge. Also note that R-tipping due to crossing a quasithreshold may not correspond to a bifurcation of a pullback attractor.
    \item [(g)]
     {\em R-tipping in nonautonomous partial differential equations (PDEs).}
     So far, analysis of R-tipping have focused on nonautonomous ordinary differential equation models~\eqref{eq:ode}. However, there are important examples of R-tipping  in spatially-extended systems modelled by nonautonomous PDEs~\cite{chen2015,siteur2014} including heterogenous reaction-diffusion systems~\cite{Berestycki2009,hasan2022}. Analysis of R-tipping in PDEs is more challenging, will likely involve new critical factors such as critical spatial extent of the external input, and requires development of alternative mathematical techniques; e.g. see~\cite{hasan2022}.   
     \item [(h)]
     {\em R-tipping and Control Theory.}
     The  R-tipping framework presented here gives rigorous results about asymptotic behaviour of a nonlinear system for a given external input, in the spirit of dynamical systems theory. This approach is motivated by applications  where given inputs may be difficult to alter or control (e.g. climate, ecology, earthquakes or neuroscience). 
     An alternative approach is to specify the desired asymptotic state, and use ideas from control theory to make rigorous statements about the class of `optimal' external inputs.
     This  interesting direction of future research on R-tipping is of interest in applications where one has control over the external inputs (e.g. control engineering, climate change mitigation strategy, disease treatment or epidemiological intervention strategy).
\end{itemize}

\subsection*{Acknowledgements}
The initial stages of this research was supported by the CRITICS Innovative Training Network, funded by the European Union's Horizon 2020 research and innovation programme under the Marie Sk\l{}odowska-Curie Grant Agreement No. 643073.
The research of SW was partially supported by the EvoGamesPlus Innovative Training Network funded by the European Union’s Horizon 2020 research and innovation programme under the Marie Skłodowska-Curie grant agreement No 955708, and the Enterprise Ireland Innovative Partnership Programme project IP20190771.
The research of PA was partially supported through funding from EPSRC project EP/T018178/1. This is TiPES contribution \#135. This project received funding from the European Union’s Horizon 2020 research and innovation programme under grant agreement No 820970 (TiPES). We thank the reviewers and the following for their perceptive comments on this research: Ulrike Feudel, Justin Finkel, Chris K.R.T. Jones, Bernd Krauskopf, Martin Rasmussen, Jan Sieber, Mary Silber, Katherine Slyman, Alex Strang.


\bibliographystyle{plain}
\bibliography{rtip_Dec_2022}


\appendix

\section{Appendix: Some geometric background}

\subsection{Hausdorff Distance Functions}
\label{sec:A1}

Recall the {\em Hausdorff semi-distance} between a 
point $x$ and a compact set $A$ of a normed space is given by
\begin{equation}
\label{eq:Hsdist}
    d(x,A) = \inf_{y\in{A}}\,\Vert x - y\Vert,   
\end{equation}
For simplicity, we write
\begin{align}
&x^{[r]}\left(\T\right)\to A\;\;\mbox{as}\;\; \T\to +\infty
\quad\mbox{to denote}\quad
d\left(x^{[r]}(\T),A\right)\rightarrow 0\;\;\mbox{as}\;\; \T\to +\infty,\;\;\mbox{and}\nonumber\\
&x^{[r]}\left(\T\right)\not\rightarrow A\;\;\mbox{as}\;\; \T\to +\infty
\quad\mbox{to denote}\quad
d\left(x^{[r]}(\T),A\right)\not\rightarrow 0\;\;\mbox{as}\;\; \T\to +\infty.\nonumber
\end{align}
In Theorem~\ref{thm:trackingthresholds}  we use the
{\em Hausdorff distance} between compact sets $A$ and $B$:
\begin{equation}
\label{eq:Hdist}
    d_H(A,B)=\max\left(d(A,B),d(B,A)\right),
\end{equation}
where 
$$
d(A,B)=\sup_{x\in A}\left[\inf_{y\in B} \Vert x-y\Vert\right].
$$
Note that that $d(A,B)=0$ if and only if $A\subset B$ and $d_H$ is a metric on the space of compact subsets.

\subsection{Embedded and Orientable Manifolds}
\label{sec:A2}

In order to define regular thresholds in Sec.~\ref{sec:rthr}, we recall 
some properties of invariant manifolds, and refer to~\cite{Robinson1999} 
for a more general discussion.
A set $S\in\R^n$ is an {\em immersed codimension-one manifold} if there 
is an $(n-1)$-dimensional manifold $V$ and a smooth map 
$$
F:V\to \R^n,
$$
such that $F(V)=S$ and $DF(v)$ has maximal rank at all $v\in V$. 
The immersed manifold $S$ is {\em embedded} if $F$ can be chosen such 
that $F$ is a homeomorphism onto its image.  For the particular case 
of an embedded codimension-one manifold, $F(V)=S\subset \R^n$ is {\em orientable} if there is a normal unit vector $\nu(x)$ that varies 
smoothly with $x\in S$. Note that $\nu(x)$ is normal to the tangent 
space $T_xS$, and $\mu\in T_xS$ if and only if $\nu(x)\cdot\mu=0$. 
In such case, there are two choices for a normal unit vector 
corresponding to $\pm \nu(x)$.
We say an embedded manifold $S$ {\em varies continuously (or smoothly)
with} $\lambda$ if the embedding map $F$ can be chosen to be continuous 
(or smooth) in $\lambda$. 

Suppose that $S$ is a codimension-one invariant stable manifold of a (normally) hyperbolic compact invariant set defined to contain the set.
Then, $S$ is an injectively immersed repelling manifold~\cite{Robinson1999}
and thus a candidate for a threshold. 
However, $S$ need not be orientable.
For example, if $S$ is the stable manifold of a saddle limit cycle with a
real negative Floquet multiplier~\cite{Osinga2003}, or the stable manifold
of a saddle equilibrium that undergoes a non-orientable homoclinic bifurcation~\cite{Aguirre2013}, then $S$ is non-orientable.
Moreover, an orientable $S$ need not be embedded: it may be remarkably complex, locally disconnected and even fractal in structure; see~\cite{Aguirreetal2009} for a review. In case $S$ is non-orientable 
or not embedded, one may be able to restrict to an orientable embedded submanifold  of $S$, though this is not possible in general.

\subsection{Signed distance near a threshold}
\label{sec:A3}

Near an embedded orientable codimension-one manifold $S$, one can define a {\em signed distance} between a point $x$ and $S$. We choose an open set $N$ such that $S$ divides $N$ into two components which we call (arbitrarily) $N_{-}$ and $N_{+}$, and use $N^c$ to denote the complement of $N$. We then define
\begin{equation}
\label{eq:sdist}
\begin{split}
d_s(x,S) = 
  \left\{\begin{array}{rcl}
      d(x,S) &\mbox{if} & x\in N_+,\\
      0 &\mbox{if} & x\in S,\\
      -d(x,S) &\mbox{if} & x\in N_-.\\
      \infty &\mbox{if}& x\in N^c
    \end{array}\right.
    \end{split}
\end{equation}
Note that there is a choice of $N$ such that $d_s$ is a smooth function of $x\in N$~\cite[Lemma 14.16]{gilbarg}. 

\subsection{Attractors and boundary of a basin of attraction}
\label{sec:A4}

Suppose that 
$\psi(\T,x_0)$ is the solution to the autonomous frozen system~\eqref{eq:odears} 
at time $\T$ started from the initial condition $x=x_0$ at $\T=0$. Consider any set $D$ and define
$\psi(\T,D)=\{\psi(\T,x)\,:\, x\in D \}$. Then the $\omega-$limit set of $D$ is
$$
\omega (D)= \bigcap_{T>0}\,\overline{\{\psi(\T,D)\,:\,\T>T\}}.
$$
We define an attractor as follows~\cite{Milnor2006}:
\begin{defn}
We say that a compact invariant set $A\subset\R^n$
is an attractor for the autonomous frozen system
if:
\begin{itemize}
    \item[(i)] $A$ is the $\omega$-limit set of a neighbourhood of itself.
    \item[(ii)]
    $A$ does not contain any proper subsets that satisfy (i).
\end{itemize}
\end{defn}

The basin of attraction of $A$ is $$
B(A)=\{x\,:\,\omega(x)\subset A\},
$$ 
and its 
boundary is  
$$\partial B(A) = \overline{B(A)}\setminus B(A),$$
where $\overline{B(A)}$ is the basin closure.
Note that, in general, a codimension-one basin boundary need not divide the phase space into different basins of attraction. Indeed, the basin boundary need not be connected or even locally connected.


\section{Proof of Theorem~\ref{thm:Rtip}}
\label{app:Rtipproof}

We give a detailed proof of statements (a) and (b).

(a) Threshold instability of $e(\lambda)$  on $P$ due to $\theta(\lambda)$  implies that there is a $C^1$-smooth family of $\theta(\lambda)$, as well as $\lambda_a$ and $\lambda_b$ in $P$ and in the domain of existence of $\theta(\lambda)$, such that 
$d_s(e(\lambda_a),\theta(\lambda_b))=0$ and 
$d_s(e(\lambda_1),\theta(\lambda_2))$ takes both
signs in any neighbourhood 
of $(\lambda_a,\lambda_b)$ in $P^2$.
Recall from~\eqref{eq:sdtau} the signed distance notation $\Delta_{\Lambda}(\T_1,\T_2)$ at different points in time, and from
Appendix~\ref{sec:A3} that $\Delta_{\Lambda}(\T_1,\T_2)$ is smooth and well defined near $(\T_a,\T_b)$.
Now choose any $C^1$-smooth function $\Lambda(\tau)$ such that:
\begin{itemize}
    \item 
    $\Lambda(\tau)$ traces out $P_{\Lambda}=P$ and is exponentially bi-asymptotically constant to $\lambda^\pm$. 
    \item 
    There are $\T_a<\T_b$ such that\,\footnote{Note that $\Lambda(\T)$ passes through $\lambda_{a}$ before $\lambda_b$, though it may pass through either or both of these values several times.}
    $\Lambda(\T_a)=\lambda_{a}$ and $\Lambda(\T_b)=\lambda_{b}$,
    \item 
     $\Delta_{\Lambda}(\T_a,\T_b) = 0$, and
    $\Delta_{\Lambda}(\T_1,\T_2)$ 
    takes both signs in any neighbourhood
    of $(\T_a,\T_b)\in\R^2$.
    \item 
    The future limit $\lambda^+$ of $\Lambda(\T)$ 
    is in the domain of existence of $\eta(\lambda)\in\theta(\lambda)$.
\end{itemize} 
For such external input $\Lambda(\T)$, the moving sink $e(\Lambda(\T))$ is forward threshold unstable due to a moving regular threshold $\theta(\Lambda(\T))$ with a moving equilibrium regular edge state $\eta(\Lambda(\T))$ that limits to  
an equilibrium  regular R-tipping edge state 
$\eta^+$. We then apply case (b) of this theorem for this $\Lambda(\T)$ to obtain the result.

(b) Choose any convex neighbourhood $\mathcal{N}$ of $(\T_a,\T_b)$ in $\mathbb{R}^2$.
Forward threshold instability of $e(\Lambda(\T))$ due to $\theta(\Lambda(\T))$ means that
 we can choose a small enough $\delta>0$, as well as the time pairs
$(\T_a^-,\T_b^-)$ and $(\T_a^+,\T_b^+)$ in $\mathcal{N}$, such that
\begin{equation}
\Delta_{\Lambda}(\T_a^+,\T_b^+) = \delta > \,0
\quad\mbox{and}\quad
\Delta_{\Lambda}(\T_a^-,\T_b^-) =  -\delta < \,0;
\label{eq:Delta_delta}
\end{equation}
see Figure~\ref{fig:N} for an illustration of this.

\begin{figure}[t]
  \begin{center}
    \includegraphics[width=8cm]{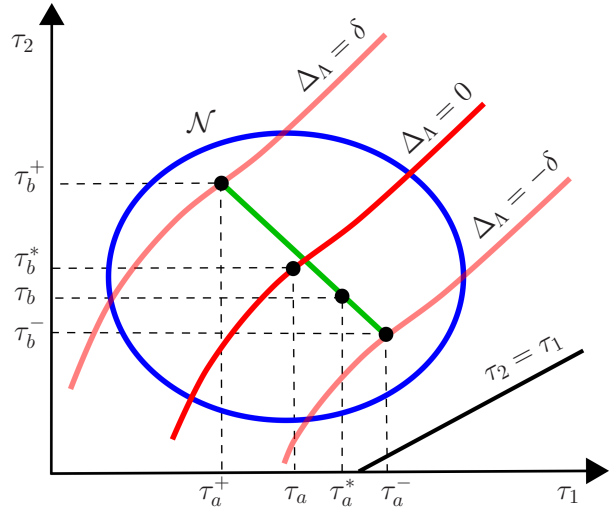}
  \end{center}
  \vspace{-5mm}
  \caption{An illustration of the threshold in the proof of Theorem~\ref{thm:Rtip}: the $(\T_1,\T_2)$-plane  with a (blue) convex neighbourhood ${\cal{N}}$ of $(\T_a,\T_b)$ in the region where $\T_2>\T_1$. Shown are examples of time pairs: $(\T_a,\T_b)$ where $\Delta_{\Lambda}(\T_a,\T_b) =0$,  $(\T_a^+,\T_b^+)$ where $\Delta_{\Lambda}(\T_a^+,\T_b^+) =\delta > 0$, and $(\T_a^-,\T_b^-)$ where $\Delta_{\Lambda}(\T_a^-,\T_b^-) =-\delta <0$. A time pair $(\T_a^*,\T_b^*)$, where $d_s(x^{[r^*,\tilde\Lambda]}(\epsilon,e^-),\Theta^{[r^*,\tilde\Lambda]}(\epsilon))=0$, is guaranteed to lie somewhere on the (green) line from $(\T_a^+,\T_b^+)$ to $(\T_a^-,\T_b^-)$.
  }
  \label{fig:N}
\end{figure}

Next, consider a time reparametrisation of the prescribed external input $\Lambda(\T)$:
\begin{equation}
\tilde\Lambda(\T)=\Lambda(\sigma_{\T_\alpha,\T_\beta,\epsilon}(\T)),
\label{eq:lambdaconstruct}
\end{equation}
using a parametrised family of strictly monotone increasing functions $\sigma_{\T_\alpha,\T_\beta,\epsilon}(\T)$ with range $\R$ and three parameters\,\footnote{Note that the subscript in $\T_\alpha$ is not related to the compactification parameter $\alpha$.} $\epsilon>0$ and  $\T_\alpha<\T_\beta\in{\cal{N}}$.
We define  this reparameterisation of  time
by means of a function
\begin{equation}
    \sigma_{\T_\alpha,\T_\beta,\epsilon}(\T):= \T_{\alpha}+\epsilon \T +\left(\T_\beta-\T_\alpha-\epsilon^2\right)
    \xi\left(\T/\epsilon
    \right),
    \label{eq:sigma}
\end{equation}
where $\xi(v)$ is a smooth function such that $\xi(v)=0$ for $v\leq 0$,  $\xi(v)=1$ for $v\geq 1$ and $\xi(v)$ is strictly monotone increasing for $v\in(0,1)$. For example, we can take
$$
\xi(v):= \frac{\chi(v)}{\chi(v)+\chi(1-v)},
$$
where
$$
\chi(v):=
\left\{
    \begin{array}{rcl}
    \exp(-1/v) &\mbox{for}& v>0,\\
    0 &\mbox{for}& v \le 0,
    \end{array}\right.
$$
takes values in the interval $[0,1)$ and is strictly monotone increasing for $v>0$. 
One can check that $\sigma_{\T_\alpha,\T_\beta,\epsilon}(\T)$ defined by~\eqref{eq:sigma}
is $C^{\infty}$-smooth in all three parameters, strictly monotone increasing in $\T$ as long as $\epsilon^2<\T_\beta-\T_\alpha$, 
linear with slope $\epsilon$ for $\T\le 0$ and for $\T\ge \epsilon$:
\begin{equation}
\sigma_{\T_\alpha,\T_\beta,\epsilon}(\T)=
\left\{ \begin{array}{rll}
\T_\alpha+\epsilon\T& ~\mbox{ if } & \T\leq 0,\\
\T_\beta+\epsilon(\T-\epsilon)& ~\mbox{ if } & \T\geq \epsilon,
\end{array}\right.
\nonumber
\end{equation}
and satisfies 
\begin{equation}
\label{eq:aux0}
\sigma_{\T_\alpha,\T_\beta,\epsilon}(0)=\T_{\alpha}\quad\mbox{and}\quad\sigma_{\T_\alpha,\T_\beta,\epsilon}(\epsilon)=\T_{\beta}.
\end{equation} 
In other words, for $\epsilon$ small compared to $\T_\beta-\T_\alpha$, there is slow change for $\T \le 0$, rapid change for $\T\in(0,\epsilon)$, and slow change thereafter.
In the limit $\epsilon=0$, the reparameterisation function~\eqref{eq:sigma}  has a  jump discontinuity at $\T=0$.
 Most importantly, if $\Lambda(\T)$ is exponentially bi-asymptotically constant with decay coefficient $\rho>0$, then $\tilde{\Lambda}(\T)$ is also exponentially bi-asymptotically constant with decay coefficient $\epsilon \rho$,  so the results
in Sections~\ref{sec:compactdyns} and~\ref{sec:TrackingProof} apply to $\tilde{\Lambda}(\T)$.

For the reparametrised input $\tilde{\Lambda}$ in~\eqref{eq:lambdaconstruct}, we write the unique pullback attractor 
from Proposition~\ref{prop:invsete-}(a) as $x^{[r,\tilde\Lambda]}(\T,e^-)$ to indicate that, in addition to $r$, it depends on 
$\epsilon$ and $\T_\alpha < \T_\beta$ through $\tilde{\Lambda}$. 
We fix the rate parameter $r = r^* > 0$ and show that there is a choice of the  parameters $\epsilon$ and $\T_{\alpha} < \T_{\beta}$  in $\tilde{\Lambda}$ such that the ensuing $\tilde\Lambda(\T)$ 
gives R-tipping at  this $r=r^*$.

By the argument in  Theorem~\ref{thm:tracking}(b),  solution $x^{[r^*,\tilde\Lambda]}(\T,e^-)$ exists and $\delta$-close tracks the moving sink $e(\tilde\Lambda(\T))$ for all  $\T \le 0$ if $\epsilon$ is small enough. Similarly, by the argument in  Theorem~\ref{thm:trackingthresholds}(b), a regular R-tipping threshold
$\Theta^{[r^*,\tilde\Lambda]}(\T)$ anchored by $\eta^+$ at infinity exists and $\delta$-close\,\footnote{The notion of Hausdorff distance $d_H$ is discussed in Appendix~\ref{sec:A1}.} tracks the moving regular threshold $\theta(\tilde\Lambda(\T))$
for all  $\T \ge \epsilon$ if $\epsilon$ is small enough.
This means that there is an $\epsilon_1 > 0$ such that if $0<\epsilon<\epsilon_1$ then
\begin{equation}
\label{eq:d1}
d\left(x^{[r^*,\tilde\Lambda]}(\T,e^-),e(\tilde{\Lambda}(\T))\right)<\frac{1}{3}\delta\quad\mbox{for all}\quad \T\leq 0\quad\mbox{and}\quad (\T_\alpha,\T_\beta)\in{\cal{N}}, 
\end{equation}
and
\begin{equation}
\label{eq:dH}
d_H\left(\Theta^{[r^*,\tilde\Lambda]}(\T),\theta(\tilde\Lambda(\T))\right)<\frac{1}{3}\delta\quad\mbox{for all}\quad \T\geq \epsilon
\quad\mbox{and}\quad (\T_\alpha,\T_\beta)\in{\cal{N}}.
\end{equation}
Furthermore, local continuity of $x^{[r^*,\tilde{\Lambda}]}(\T,e^-)$ on varying time and the  three parameters in $\tilde{\Lambda}$ means that there is an  $\epsilon_2 > 0$ such that if $0<\epsilon<\epsilon_2$ then
\begin{equation}
\label{eq:d2}
d\left(x^{[r^*,\tilde\Lambda]}(0,e^-),x^{[r^*,\tilde{\Lambda}]}(\epsilon,e^-)\right)<\frac{1}{3}\delta\quad\mbox{for all}\quad (\T_\alpha,\T_\beta)\in{\cal{N}}.
\end{equation}
We chose $0 < \epsilon < \min\{\epsilon_1,\epsilon_2 \}$.

 We now examine the signed distance\,\footnote{The notion of signed distance $d_s$ is discussed in Appendix~\ref{sec:A3}. } between $x^{[r^*,\tilde\Lambda]}(\T,e^-)$ and $\Theta^{[r^*,\tilde\Lambda]}(\T)$ at time $\T=\epsilon$, its dependence on the two remaining parameters $\T_\alpha < \T_\beta$, and choose $(\T_\alpha,\T_\beta)\in{\cal{N}}$ that give R-tipping.
Recall the triangle inequality  $d(a,b)\le d(a,c) + d(c,b)$ for points $a,b,c\in \R^n$, and also note that $|d_s(a,S)-d_s(a',S)|\le d(a,a')$ and $|d_s(a,S)-d_s(a,S')|\leq d_H(S,S')$ for any codimension one sets $S,S'$, and points $a,a'$ in some convex neighbourhood of $S$ and $S'$, respectively, where $d_s(a,S)$ and $d_s(a,S')$ are defined. Using these inequalities together with~\eqref{eq:d1} and~\eqref{eq:d2},
note that
\begin{align}
\begin{split}
& \left|d_s\left(x^{[r^*,\tilde\Lambda]}(\epsilon,e^-),\theta(\tilde\Lambda(\epsilon)\right) - d_s\left(e(\tilde\Lambda(0)),\theta(\tilde\Lambda(\epsilon)\right)\right| \\
& \le
d\left(x^{[r^*,\tilde\Lambda]}(\epsilon,e^-),e(\tilde\Lambda(0))\right)\\
&\le
d\left(x^{[r^*,\tilde\Lambda]}(\epsilon,e^-),x^{[r^*,\tilde\Lambda]}(0,e^-)\right)
+
d\left(x^{[r^*,\tilde\Lambda]}(0,e^-),e(\tilde\Lambda(0))\right) 
\\
&<  \frac{1}{3}\delta+\frac{1}{3}\delta = \frac{2}{3}\delta  \quad\mbox{for all}\quad (\T_\alpha,\T_\beta)\in{\cal{N}}. 
\end{split}
\label{eq:aux1}
\end{align}
 Similarly, using~\eqref{eq:dH}, note that
\begin{align}
\begin{split}
& \left|d_s\left(e(\tilde\Lambda(0)),\Theta^{[r^*,\tilde\Lambda]}(\epsilon)\right) - d_s\left(e(\tilde\Lambda(0)),\theta(\tilde\Lambda(\epsilon)\right)\right| \\
&\le
d_H\left(\Theta^{[r^*,\tilde\Lambda]}(\epsilon),\theta(\tilde\Lambda(\epsilon)\right) \\
&<  \frac{1}{3}\delta \quad\mbox{for all}\quad (\T_\alpha,\T_\beta)\in{\cal{N}}.
\end{split}
\label{eq:aux2}
\end{align}
The triangle inequality  $|a-b| \le |a-c| + |c-b|$ for $a,b,c\in \R$, together with~\eqref{eq:aux1} and~\eqref{eq:aux2}, gives
\begin{align}
\begin{split}
&\left|
d_s\left(x^{[r^*,\tilde\Lambda]}(\epsilon,e^-),\Theta^{[r^*,\tilde\Lambda]}(\epsilon)\right) 
- 
d_s\left(e(\tilde\Lambda(0)),\theta(\tilde\Lambda(\epsilon)\right)
\right|\\
&\le
d_H\left(\Theta^{[r^*,\tilde\Lambda]}(\epsilon),\theta(\tilde\Lambda(\epsilon))\right)
+
d\left(x^{[r^*,\tilde\Lambda]}(\epsilon,e^-),e(\tilde\Lambda(0))\right)
\\
& < \frac{1}{3}\delta + \frac{2}{3}\delta = \delta\quad\mbox{for all}\quad (\T_\alpha,\T_\beta)\in{\cal{N}}.
\end{split}
\label{eq:aux3}
\end{align}
\begin{figure}[t]
    \centering
    \includegraphics[width=14.5cm]{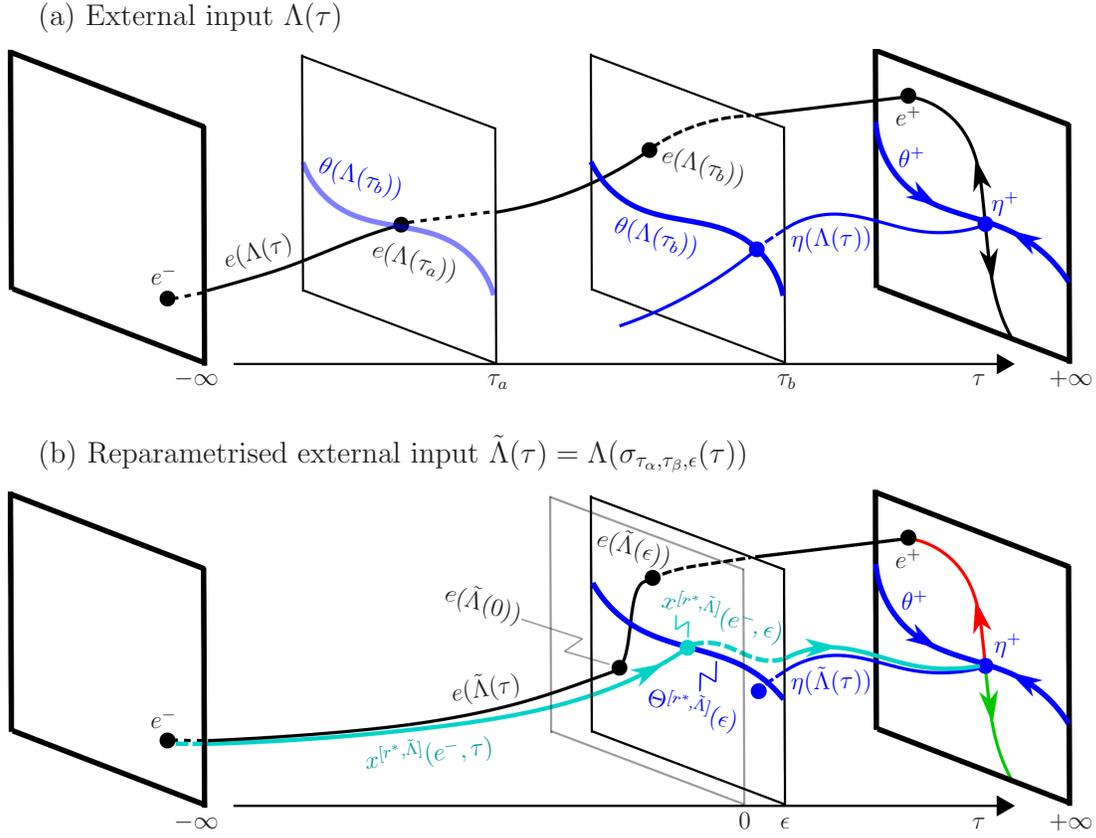}
    \caption{Construction of the time reparametrization in the proof of Theorem~\ref{thm:Rtip} for $x\in\R^2$. (a) Forward threshold instability of the moving sink $e(\Lambda(\T_1))$ due to crossing the moving regular threshold $\theta(\Lambda(\T_2))$
    for $(\T_1,\T_2)=(\T_a,\T_b)$. (b) For some fixed $r=r^*>0$, there is a reparametrisation $\tilde{\Lambda}(\T) = \Lambda(\sigma_{\T_\alpha,\T_\beta,\epsilon}(\T))$ such that the (cyan) pullback attractor $x^{[r,\tilde{\Lambda}]}(e^-,\T)$ enters a regular R-tipping threshold $\Theta^{[r,\tilde{\Lambda}]}(\T)$ (see the snapshot at time $\T=\epsilon$) for a suitable choice of $\epsilon$ and $(\T_\alpha,\T_\beta)=(\T_a^*,\T_b^*)$ shown in Figure~\ref{fig:N}. The pullback attractor then tracks $\eta(\tilde{\Lambda}(\tau))$ and limits to the regular equilibrium R-tipping edge state $\eta^+$. For non-degenerate R-tipping, the pullback attractor switches (red/green) edge tail on crossing $r^*$.
    }
    \label{fig:threshold}
\end{figure}
Finally, note from~\eqref{eq:aux0} that
$$
d_s\left(e(\tilde\Lambda(0)),\theta(\tilde\Lambda(\epsilon)\right)  =\Delta_\Lambda(\T_\alpha,\T_\beta),
$$
and use~\eqref{eq:aux3} to arrive at
\begin{align}
\label{eq:aux4}
\begin{split}
&\Delta_\Lambda(\T_\alpha,\T_\beta) - \delta < d_s\left(x^{[r^*,\tilde\Lambda]}(\epsilon,e^-),\Theta^{[r^*,\tilde\Lambda]}(\epsilon)\right)<\Delta_\Lambda(\T_\alpha,\T_\beta) + \delta\\&\mbox{for all}\quad (\T_\alpha,\T_\beta)\in{\cal{N}}.
\end{split}
\end{align}
For $(\T_\alpha,\T_\beta)=(\T_{a}^+,\T_{b}^+)$, it follows from~\eqref{eq:Delta_delta} and~\eqref{eq:aux4} that
$$
0 < d_s\left(x^{[r^*,\tilde\Lambda]}(\epsilon,e^-),\Theta^{[r_c,\tilde\Lambda]}(\epsilon)\right) <2\delta.
$$
The same argument applied to  $(\T_\alpha,\T_\beta)=(\T_{a}^-,\T_{b}^-)$ gives
$$
-2\delta < d_s\left(x^{[r^*,\tilde\Lambda]}(\epsilon,e^-),\Theta^{[r^*,\tilde\Lambda]}(\epsilon)\right) < 0.
$$
Now consider pairs $(\T_\alpha,\T_\beta)$ on the line in $\mathcal{N}$ from $(\T_a^+,\T_b^+)$ to $(\T_a^-,\T_b^-)$; see the green line in Figure~\ref{fig:N}. Noting that 
$$
d_s\left(x^{[r^*,\tilde\Lambda]}(\epsilon,e^-),\Theta^{[r^*,\tilde\Lambda]}(\epsilon)\right),
$$
is continuous on this line, the intermediate value theorem 
 guarantees a choice of  $(\T_{\alpha},\T_\beta)= (\T_{\alpha}^*,\T_\beta^*)$ on this line such that
$$
d_s\left(x^{[r^*,\tilde\Lambda]}(\epsilon,e^-),\Theta^{[r^*,\tilde\Lambda]}(\epsilon)\right)=0.
$$
It then follows from the properties of $\Theta^{[r]}(\T)$ in Definition~\ref{def:rtipthres} that
$$
x^{[r^*,\tilde\Lambda]}(\T,e^-)\rightarrow \eta^+\;\;\mbox{as}\;\; \T\to +\infty,
$$
for the chosen $0 < \epsilon < \min\{\epsilon_1,\epsilon_2 \}$ and  $(\T_{\alpha},\T_\beta)= (\T_{\alpha}^*,\T_\beta^*)\in{\cal{N}}$; see Figure~\ref{fig:threshold} for an illustration of this.
Hence we conclude there is R-tipping for this $\tilde{\Lambda}(\T)$  at $r=r^*$.
\qed

\end{document}